\theoremstyle{plain}
\newtheorem{thm}{Theorem}[section]
\newtheorem{prop}[thm]{Proposition}
\newtheorem{cor}[thm]{Corollary}
\newtheorem{pro}[thm]{Properties}
\newtheorem{ques}[thm]{Question}
\theoremstyle{definition}
\newtheorem{defn}[thm]{Definition}
\newtheorem{rmk}[thm]{Remark}
\def\Z{{\mathbb Z}}
\def\cd{\protect\operatorname{cd}}
\def\cat{\protect\operatorname{cat}}
\def\cuplen{\protect\operatorname{cup}}
\def\secat{\protect\operatorname{secat}}
\def\im{\protect\operatorname{im}}
\def\rank{\protect\operatorname{rank}}
\title{On the sequential topological complexity of group homomorphisms}
\author{Nursultan Kuanyshov}
\address{{Nursultan Kuanyshov, Department of Mathematics, University of Florida, 358 Little Hall, Gainesville, FL 32611-8105, USA} and {Institute of Mathematics and Mathematical Modeling
125 Pushkin str., 050010 Almaty, Kazakhstan}}
\email{nkuanyshov@ufl.edu, kuanyshov.nursultan@gmail.com}
\subjclass[2020]{Primary 50M30; Secondary 20J06, 20K45, 20K30, 20K10}
\keywords{The sequential topological complexity, cohomological dimension, Schwartz genus, Lusternik-Schnirelmann category, group homomorphism}
\begin{document}
\maketitle
\begin{abstract}
We define and develop a homotopy invariant notion for the sequential topological complexity of a map $f:X\to Y,$ denoted $TC_{r}(f)$, that interacts with $TC_{r}(X)$ and $TC_{r}(Y)$ in the same way Jamie Scott's topological complexity map $TC(f)$ interacts with $TC(X)$ and $TC(Y).$ Furthermore, we apply $TC_{r}(f)$ to studying group homomorphisms $\phi: G\to H.$ 

In addition, we prove that the sequential topological complexity of any nonzero homomorphism of a torsion group cannot be finite. Also, we give the characterisation of cohomological dimension of group homomorphisms.   
\end{abstract}

\section{Introduction}

Let $r\geq 2$ be an integer. The sequential topological complexity $TC_{r}(X)$ of a path-connected
space X was introduced by Rudyak \cite{Rud}, generalising Farber’s topological complexity \cite{Fa1}. The
motivation for these numerical invariants arises from robotics (see \cite{Fa1, Fa2, Fa3, KL, La}). They provide a measure of complexity
for the motion planning problem in the configuration space X with prescribed initial and final states, as
well as $r-2$ consecutive intermediate states. More precisely, consider the path-fibration $p_{r}: PX\to X^{r}$
that maps a path $\gamma:[0,1]\to X$ to the tuple $\big(\gamma(0),\gamma(\frac{1}{r-1}),\cdots,\gamma(\frac{r-2}{r-1}),\gamma(1)\big)$. Then $TC_{r}(X)$ is defined as the minimal integer $n$ for which $X^{r}$ can be covered by $n+1$ many open subsets $U_{0},\cdots,U_{k}$ such that $p_{r}$ admits a local section over each $U_{i}.$ If no such $n$ exists, we set $TC_{r}(X)=\infty.$ Note that $TC_{2}(X)$ recovers Farber's topological complexity.

When the mechanical system is a robotic arm, the $TC_{r}$ input on the navigation problem is not quite satisfactory. In the case of robotic arm one has a map $f:X\to Y$ of the configuration space X to the work space Y. Defining such a notion has gone through many forms. It started when Dranishnikov suggested to study the topological complexity of maps $TC(f)$ in his lectures at the workshop on Applied Algebraic Topology in Castro Urdiales in 2014. He defined it as the minimal number $k$ such that $X\times Y$ can be covered by open sets $U_{0},\cdots,U_{k}$ such that over each $U_{i}$ there is a section to the map $q:PX\to X\times Y$ defined as $q(\gamma)=(\gamma(0),f\gamma(1)).$ The drawback of his definition is that it can only apply to maps that have the path lifting property. In \cite{Pa1, Pa2} Pavesic modified above the definition to cover general maps, but his definition lacks the homotopy invariant feature. In 2020, Murillo and Wu \cite {MW} gave their notion, and J. Scott \cite {Sc} independently gave his definition and proved that his definition is equivalent to Murillo' and Wu's definition. Note that all of these definitions naturally generalizes to the sequential topological complexity of maps. In 2022, Rudyak and Soumen \cite{RS} gave their notions using relative version of LS-category of map and they conjectured their notion is equivalent to Scott's and Murillo and Wu's notions. Last year, C. Zapata and J. Gonzalez \cite{ZG} gave the generalized definitions that includes all previous notions. Further, they proved that the notions introduced by \cite{RS, Sc, MW} are equivalent. 

We extend Scott's definition for several reasons: 1) His definition makes it easy to prove basic inequalities, and we can prove that almost all known results from spaces to maps in this paper. 2) Very recently, probabilistic variants of LS-category and the sequential topological complexity of a metric space are given independently in \cite{DJ, Ju} and \cite{KW}. One can use the techniques from this paper to extend their notions from metric spaces to maps between metric spaces. These are relevant notions because they gave lower bounds for LS-category and the sequential topological complexity respectively. 

Our objective in this paper is to compute the sequential complexity of group homomorphisms. Let us recall that given a discrete group $\Gamma$, a classifying space (Eilenberg-MacLane space) $B\Gamma=K(\Gamma,1)$ is defined to be a path-connected space such that $\pi_{1}(B\Gamma)=\Gamma$ and $\pi_{k}(B\Gamma)=0$ for all $k\neq 1.$ Since $B\Gamma$ is the unique up to homotopy equivalence, we define that LS-category and the sequential topological complexity of a discrete group to be LS-category and the sequential topological complexity of its classifying space respectively, i.e. $\cat(\Gamma):=\cat(B\Gamma)$ and $TC_{r}(\Gamma):=TC_{r}(B\Gamma).$ 

Similarly, we can define the LS-category and the sequential topological complexity of group homomorphism $\phi:\Gamma\to\Lambda,$ $\cat(\phi):=\cat(B\phi)$ and $TC_{r}(\phi):=TC_{r}(B\phi)$ where $B\phi:B\Gamma\to B\Lambda$ is the classifying map between classifying spaces.   

The main results of the paper is the following theorems:

\begin{thm}[Theorem \ref{FGAG}]
Let $\phi:\Gamma\to \Lambda$ be an epimorphism of finitely generated abelian groups. 
Then $$TC_{r}(\phi)=(r-1)\cd(\phi).$$ 

In particular, if $\Gamma$ is free abelain groups, then $TC_{r}(\phi)=(r-1)(\rank(\Lambda)+k(T(\Lambda)))$ where $k(T(\lambda))$ is the Smith Normal number for given finite abelian group $T(\Lambda).$   
\end{thm}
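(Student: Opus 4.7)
The plan is to bracket $TC_r(\phi)$ by matching cohomological upper and lower bounds, and then to compute $\cd(\phi)$ explicitly via the Smith Normal Form in the free-abelian case. For the upper bound $TC_r(\phi) \le (r-1)\cd(\phi)$, I would exploit that $\Lambda$ abelian implies $B\Lambda$ is a topological abelian group, hence an $H$-space. In the framework for $TC_r$ of maps developed earlier in this paper (extending Scott), this should yield the map analogue of the classical $H$-space identity $TC_r(X) = (r-1)\cat(X)$; concretely $TC_r(\phi) \le (r-1)\cat(\phi)$. Combining this with the Eilenberg--Ganea-style bound $\cat(\phi) \le \cd(\phi)$ closes the upper bound.

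For the lower bound $TC_r(\phi) \ge (r-1)\cd(\phi)$, I would use a zero-divisor cup length estimate. Since $TC_r(\phi)$ admits a Schwarz-genus description, it is bounded below by the cup length of products of classes lying in the kernel of the multiplication-like map induced by $B\phi$ in cohomology. The task is to construct $(r-1)\cd(\phi)$ independent zero-divisors with coefficients adapted to the Smith Normal Form of $\Lambda$: integer coefficients on the free summands, and mod-$p_i$ coefficients for each torsion invariant factor $\Z/d_i$. For the "in particular" clause, when $\Gamma = \Z^n$ we have $B\Gamma = T^n$, and the decomposition $\Lambda \cong \Z^{\rank(\Lambda)} \oplus \bigoplus_{i=1}^{k(T(\Lambda))} \Z/d_i$ realises $B\phi$ up to homotopy as a product of coordinate projections composed with quotient maps $S^1 \to B(\Z/d_i)$, which then directly yields $\cd(\phi) = \rank(\Lambda) + k(T(\Lambda))$.

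The hard part will be the lower bound when $T(\Lambda) \ne 0$: then $B\Lambda$ is infinite-dimensional, and naive integer-coefficient cup-length arguments fail to produce matching bounds. The remedy is to work prime-by-prime, choosing mod-$p_i$ coefficients for each torsion invariant factor, and then verifying that the assembled cohomology products actually lie in the zero-divisor ideal attached to the map $B\phi$ rather than merely to $B\Lambda$ on its own. Threading this through the Smith decomposition, and keeping the contributions from distinct primes independent, is where the most careful bookkeeping will be required.
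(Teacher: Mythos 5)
Your bracketing strategy differs from the paper's proof, and both halves of it have genuine gaps as stated. For the upper bound you invoke the $H$-space structure of the \emph{target} $B\Lambda$ to claim $TC_{r}(\phi)\leq (r-1)\cat(\phi)$. But the mechanism that improves the generic bound $TC_{r}(f)\leq\cat(f^{r})$ to $\cat(f^{r-1})$ uses division in the \emph{domain}: one covers $G^{r}=G\times G^{r-1}$ by sets of the form $\{(a,\bar g): a^{-1}\bar g\in U_{i}\}$, which requires $G$ (not $Y$) to be a topological group (this is exactly Theorem \ref{H space}); a group structure on the target alone only reproduces $TC_{r}(f)\leq\cat(f^{r})$. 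Your step is rescuable here only because $\Gamma$ is also abelian, so $B\Gamma$ is a topological group and $TC_{r}(\phi)=\cat(\phi^{r-1})\leq(r-1)\cat(\phi)$ — but then you still need $\cat(\phi)=\cd(\phi)$ (or $\cat(\phi^{r-1})=\cd(\phi^{r-1})$), which is not an ``Eilenberg--Ganea-style'' formality: for homomorphisms it is a nontrivial theorem known for finitely generated abelian groups (Theorem 1 of [Ku1]), and in general the relation between $\cat(\phi)$ and $\cd(\phi)$ is open (see Question \ref{gap}). You also never treat the case $\phi(T(\Gamma))\neq 0$, where $\cd(\phi)=\infty$ and one must separately argue $TC_{r}(\phi)=\infty$, nor the reduction (via the retraction $B\Gamma\to B\Z^{n}$ and Corollary \ref{retraction}) that legitimizes assuming $\Gamma=\Z^{n}$.

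For the lower bound, your prime-by-prime zero-divisor plan fails as described: cup products multiply coefficients by tensor product, and $\Z/p\otimes\Z/q=0$ for distinct primes, so classes taken ``mod $p_{i}$ for each invariant factor'' with different primes can never have a nonzero product — this is not bookkeeping, it is an obstruction. The repair is to use a \emph{single} prime $p$ dividing the smallest invariant factor $d_{1}$; since $d_{1}\mid d_{2}\mid\cdots\mid d_{k}$, one gets $\rank(\Lambda)+k(T(\Lambda))$ linearly independent classes in $H^{1}(\Lambda;\Z/p)$, which pull back injectively along the epimorphism $\phi$, and the $(r-1)(\rank(\Lambda)+k(T(\Lambda)))$ differences $pr_{j}^{*}-pr_{j+1}^{*}$ of these have nonzero product in the exterior algebra $H^{*}((T^{n})^{r};\Z/p)$, so Theorem \ref{cup length} then applies. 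With that fix your route does work and is genuinely different from the paper's: the paper never computes cup products, but instead obtains the lower bound from the equality $TC_{r}(\phi)=\cat(\phi^{r-1})$ (Theorem \ref{H space}, domain a torus), the identity $\cat(\phi^{r-1})=\cd(\phi^{r-1})$ from [Ku1], and the product formula $\cd(\phi^{r-1})=(r-1)\cd(\phi)$ of [DD] (Corollary \ref{product formula}), together with $\cd(\phi)=\rank(\Lambda)+k(T(\Lambda))$ from [Ku1]; your explicit mod-$p$ zero-divisor argument buys independence from the [DD] product formula, at the cost of the coefficient bookkeeping above, while the paper's argument also covers the infinite (torsion-image) case uniformly.
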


\begin{thm}[Theorem \ref{Free}]
Let $\phi:\Gamma\to\Lambda$ be a nonzero epimorphism of free groups. Then $$TC_{r}(\phi)=\begin{cases} r-1 & \mbox{if} \,\,\, \Lambda\cong\Z \\ r & \mbox{if} \,\,\,  \Lambda\cong F_{m},m>1\end{cases}$$
\end{thm}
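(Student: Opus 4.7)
The plan is to match upper and lower bounds in each case. Throughout, take $X := B\Gamma$ and $Y := B\Lambda$, each realized as a wedge of circles; then $\cat(X) = \cat(Y) = 1$ and $H^{\geq 2}(X) = H^{\geq 2}(Y) = 0$.

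For the upper bounds I plan to invoke two general inequalities for the map version of $TC_{r}$, which should be in hand from the Scott-type framework developed earlier in the paper. When $\Lambda \cong \Z$, the inequality $TC_{r}(f) \leq TC_{r}(Y)$ together with the classical value $TC_{r}(S^{1}) = r-1$ gives $TC_{r}(\phi) \leq r-1$ at once. When $\Lambda \cong F_{m}$ with $m > 1$, the map analogue of the Lusternik-Schnirelmann bound $TC_{r}(f) \leq \cat(Y \times X^{r-1})$, together with subadditivity of $\cat$ under products, yields $\cat(Y \times X^{r-1}) \leq \cat(Y) + (r-1)\cat(X) = r$.

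For the lower bounds I will use a zero-divisor cup-length argument for the diagonal-type map $\delta_{r} : X \to Y \times X^{r-1}$, $\delta_{r}(x) = (f(x), x, \ldots, x)$, where $f = B\phi$. Surjectivity of $\phi$ forces $f^{*} : H^{1}(Y) \to H^{1}(X)$ to be injective. Denote by $\alpha = p_{1}^{*}a$ the pullback of a chosen generator $a \in H^{1}(Y)$ to the $Y$-factor and by $\beta_{i} = p_{i}^{*}f^{*}(a)$ its image at the $i$th $X$-factor, so that $\bar a_{i} := \beta_{i} - \alpha$ is a zero-divisor for $\delta_{r}$. When $\Lambda \cong \Z$, expanding $\prod_{i=2}^{r}\bar a_{i}$ and using $\alpha^{2} = 0$ (since $H^{\geq 2}(Y) = 0$) leaves a sum of $r$ distinct basis elements of $H^{r-1}(Y \times X^{r-1})$, namely $\beta_{2}\cdots\beta_{r}$ and, for $j = 2,\ldots,r$, the terms $-\alpha\prod_{i\neq j}\beta_{i}$. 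These are linearly independent, hence the product is nonzero, giving $TC_{r}(\phi) \geq r-1$. When $\Lambda \cong F_{m}$ with $m > 1$, introduce a second generator $b \in H^{1}(Y)$ independent from $a$ together with $\bar b_{2} := p_{2}^{*}f^{*}(b) - p_{1}^{*}b$, and compute $\bar b_{2}\prod_{i=2}^{r}\bar a_{i}$; the squaring relations $\alpha^{2} = (p_{1}^{*}b)^{2} = \alpha\cdot p_{1}^{*}b = 0$ in $Y$ and $f^{*}(a)f^{*}(b) = 0$ in $X$ kill almost every term, leaving (up to sign) exactly the two survivors $p_{1}^{*}a \cdot p_{2}^{*}f^{*}(b) \cdot \prod_{i\geq 3}p_{i}^{*}f^{*}(a)$ and $p_{1}^{*}b \cdot \prod_{i\geq 2}p_{i}^{*}f^{*}(a)$. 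These are distinct basis elements of $H^{r}(Y \times X^{r-1})$, nonzero because $f^{*}(a)$ and $f^{*}(b)$ are linearly independent in $H^{1}(X)$, so $TC_{r}(\phi) \geq r$.

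The main obstacle is bookkeeping in the Künneth expansion: confirming that the squaring relations really annihilate every term except the handful listed, and that the surviving summands index distinct basis vectors of the tensor cohomology ring (so no accidental cancellation with Koszul signs occurs). The structural input that makes every nonvanishing assertion go through is the injectivity of $f^{*}$ on $H^{1}$, an immediate consequence of the surjectivity of $\phi$, which propagates to the linear independence of the tensor products used above.
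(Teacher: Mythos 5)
Your upper bounds are fine, although the mixed inequality $TC_{r}(f)\leq \cat(Y\times X^{r-1})$ you invoke for $\Lambda\cong F_{m}$ is not one of the inequalities established in this framework; you do not need it, since Proposition \ref{TC} already gives $TC_{r}(\phi)\leq TC_{r}(BF_{m})=r$ (or $TC_{r}(\phi)\leq\cat(\phi^{r})\leq\cat((BF_{n})^{r})\leq r$). The genuine gap is in the lower bounds. Here $TC_{r}(f)$ is defined Scott-style as the sectional category of a fibration over $X^{r}$, and the only cohomological lower bound available (Theorem \ref{cup length}) concerns classes on $X^{r}$ lying in $\ker(\Updelta_{X}^{*})\cap\im(f^{r})^{*}$, whose product must be nonzero in $H^{*}(X^{r})$. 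You instead compute cup products of zero-divisors of the map $\delta_{r}:X\to Y\times X^{r-1}$ inside $H^{*}(Y\times X^{r-1})$, and no criterion of the form $\cuplen(\ker\delta_{r}^{*})\leq TC_{r}(f)$ is proved here; as a general principle it is false. Indeed, if $Y$ is a point then $TC_{r}(f)=0$ for any $X$, while $\delta_{r}$ is the diagonal $X\to X^{r-1}$, whose kernel has arbitrarily large cup-length. The reason is structural: a sequential $f$-motion planner only deforms the $Y$-coordinates $f(x_{0}),\dots,f(x_{r-1})$ toward each other; it provides no deformation of the $X$-coordinates, which is exactly what a zero-divisor argument over $Y\times X^{r-1}$ would require.

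The computation itself is salvageable, and once repaired it gives a genuinely different and more elementary route than the paper's. Replace your classes by their images on $X^{r}$: set $u_{i}:=(f^{r})^{*}(p_{i}^{*}a-p_{1}^{*}a)=p_{i}^{*}f^{*}a-p_{1}^{*}f^{*}a\in H^{1}(X^{r};\Z)$ for $i=2,\dots,r$, and in the $F_{m}$ case also $u_{b}:=(f^{r})^{*}(p_{2}^{*}b-p_{1}^{*}b)$. These lie in $\ker(\Updelta_{X}^{*})\cap\im(f^{r})^{*}$, so Theorem \ref{cup length} applies, and your K\"unneth bookkeeping transfers verbatim to $H^{*}(X^{r})$: since $\phi$ is an epimorphism of free groups it splits, so $f^{*}$ is (split) injective on $H^{1}$ and $H^{\geq 2}$ of a wedge of circles vanishes; for $\Lambda\cong\Z$ the surviving monomials of $u_{2}\cdots u_{r}$ lie in distinct K\"unneth summands, giving $TC_{r}(\phi)\geq r-1$, and for $\Lambda\cong F_{m}$, $m>1$, the two survivors of $u_{b}u_{2}\cdots u_{r}$ are $\pm f^{*}b\otimes f^{*}a\otimes\cdots\otimes f^{*}a$ and $\pm f^{*}a\otimes f^{*}b\otimes f^{*}a\otimes\cdots\otimes f^{*}a$, which cannot cancel because $f^{*}a,f^{*}b$ are linearly independent, giving $TC_{r}(\phi)\geq r$. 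For comparison, the paper gets the $\Z$-case lower bound from $\cat(\phi^{r-1})\geq\cd(\phi^{r-1})=(r-1)\cd(\phi)$ via Corollary \ref{product formula} (resting on the Dranishnikov--De Saha product formula), and the $F_{m}$-case from the $r$-th power of the twisted canonical class $\alpha_{(F_{m})}\in H^{1}(BF_{m}^{r};I(F_{m}^{r-1}))$; your untwisted argument avoids both inputs, but only after it is relocated to $X^{r}$ as above.
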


\begin{thm}[Proposition \ref{char. of group hom}]
Let $\phi:\Gamma\to \Lambda$ be an nonzero epimorphism of finitely generated groups $\Gamma, \Lambda$. Then $\cd(\phi)=0$ if and only if $\Gamma$ is the trivial group.    
\end{thm}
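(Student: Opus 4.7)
My plan is to split the equivalence into its two directions and exploit the cohomological characterization of $\cd(\phi)$ through the pullback maps $(B\phi)^{*}\colon H^{n}(B\Lambda;M)\to H^{n}(B\Gamma;\phi^{*}M)$ with local coefficients $M$.

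The ``if'' direction is essentially formal. If $\Gamma$ is trivial, then $B\Gamma$ is contractible, so $H^{n}(B\Gamma;\phi^{*}M)=0$ for every $n\geq 1$ and every $M$. Every pullback of a positive-degree class is then automatically zero, forcing $\cd(\phi)=0$. Under the ``nonzero epimorphism'' hypothesis this case is vacuous, but the implication is formally valid, and keeping it explicit lines up the statement with the definition.

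For the ``only if'' direction I would argue by contrapositive: assuming $\Gamma\neq 1$, I must exhibit $n\geq 1$, a coefficient module $M$, and a class $u\in H^{n}(B\Lambda;M)$ whose pullback $(B\phi)^{*}u$ is nonzero. Since $\phi$ is surjective and $\Gamma$ is nontrivial, the quotient $\Lambda=\Gamma/\ker\phi$ is a nontrivial finitely generated group. In the generic case $\Lambda^{\mathrm{ab}}\neq 0$, the structure theorem for finitely generated abelian groups supplies a nonzero homomorphism $\psi\colon \Lambda\to A$ with $A$ cyclic. Viewing $\psi$ as a class in $H^{1}(\Lambda;A)$, the pullback is represented by $\psi\circ\phi\colon \Gamma\to A$, which is nonzero as a composition of two surjections. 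Hence $\cd(\phi)\geq 1$.

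The main obstacle is the perfect case $\Lambda^{\mathrm{ab}}=0$, where $H^{1}$ with constant cyclic coefficients vanishes and the preceding argument fails. Here I would pass to local coefficients: since $\Lambda$ is nontrivial and finitely generated, there exist $n\geq 1$ and a $\Z\Lambda$-module $M$ with $H^{n}(\Lambda;M)\neq 0$ (for instance $M=\mathbb{F}_{p}$ whenever $\Lambda$ has $p$-torsion, or $M=\Z\Lambda$ for an infinite $\Lambda$). Applying the Lyndon--Hochschild--Serre spectral sequence to $1\to\ker\phi\to\Gamma\to\Lambda\to 1$, the edge homomorphism on the base column agrees with $(B\phi)^{*}$, and the task becomes to produce a class of minimal positive degree that survives all outgoing differentials. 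The hardest technical point is ruling out the possibility that every such class is killed by a differential; I would handle it by choosing $M$ and $n$ extremally (smallest $n$ with nonvanishing $H^{n}(\Lambda;M)$), so that the only potential differentials hit groups $H^{p}(\Lambda;H^{q}(\ker\phi;M))$ that vanish on the nose, forcing the class to survive to $E_{\infty}$ and thus to be nonzero in $H^{n}(\Gamma;\phi^{*}M)$.
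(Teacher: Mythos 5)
Your ``if'' direction and your first case ($\Lambda^{\mathrm{ab}}\neq 0$, using $H^{1}(\Lambda;A)=\mathrm{Hom}(\Lambda,A)$ and injectivity of precomposition with a surjection) are correct. The gap is in the perfect case. First, classes on the bottom row of the Lyndon--Hochschild--Serre spectral sequence have no \emph{outgoing} differentials at all (the targets $E_{r}^{n+r,1-r}$ vanish for $r\geq 2$); the danger is that the class is \emph{hit} by an incoming differential $d_{r}\colon E_{r}^{n-r,r-1}\to E_{r}^{n,0}$, whose source is a subquotient of $H^{n-r}(\Lambda;H^{r-1}(\ker\phi;M))$. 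Minimality of $n$ for the module $M$ gives vanishing of $H^{j}(\Lambda;M)$ for $0<j<n$ only, and says nothing about cohomology with the different coefficient modules $H^{r-1}(\ker\phi;M)$; in particular $H^{0}(\Lambda;H^{n-1}(\ker\phi;M))$ need not vanish. The strategy genuinely fails, not just the justification: take $\Gamma=F_{2}$ with an epimorphism onto $\Lambda=A_{5}$ and $M=\mathbb{F}_{2}$. The minimal positive degree with $H^{n}(A_{5};\mathbb{F}_{2})\neq 0$ is $n=2$ (coming from the double cover), yet $H^{2}(F_{2};\mathbb{F}_{2})=0$ because free groups have $\cd=1$, so the inflation of this extremal class is zero -- it is killed by the transgression $d_{2}\colon H^{1}(\ker\phi;\mathbb{F}_{2})^{A_{5}}\to H^{2}(A_{5};\mathbb{F}_{2})$, as the five-term exact sequence shows. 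For such targets any class surviving must live in degree $1$ with nontrivial local coefficients, which your extremal-choice mechanism does not produce.

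The paper sidesteps the case split altogether with the Berstein--Schwarz class, and this is also the cleanest repair of your argument: $\beta_{\Lambda}\in H^{1}(\Lambda;I(\Lambda))$ is represented by the derivation $\lambda\mapsto\lambda-1$, so $\phi^{*}\beta_{\Lambda}$ is represented by $g\mapsto\phi(g)-1$. If this were principal, there would be $m\in I(\Lambda)$ with $\phi(g)-1=\phi(g)m-m$ for all $g\in\Gamma$, and surjectivity of $\phi$ would force $\lambda-1=\lambda m-m$ for all $\lambda\in\Lambda$, i.e.\ $\beta_{\Lambda}=0$, which happens only for $\Lambda$ trivial -- excluded since $\phi$ is a nonzero epimorphism (note $\Gamma\neq 1$ plus nonzero epimorphism gives $\Lambda\neq 1$). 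Hence $\phi^{*}\beta_{\Lambda}\neq 0$ and $\cd(\phi)\geq 1$ in one stroke; your degree-one argument when $\Lambda^{\mathrm{ab}}\neq 0$ is the special case of this with constant cyclic coefficients. (The paper's Step 1 for finite $\Gamma$, quoting \cite{Ku1}, is logically redundant, as its own remark notes.)
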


\begin{thm}[Theorem \ref{surface groups}]
 Let $\phi:\Gamma\to\Lambda$ be an epimorphism of the fundamental groups of the closed surfaces of $M_{g}$ and $M_{g'}.$ Then $$\cat(\phi)=\cd(\phi)=1$$ if and only if $\phi$ factors through the free groups via epimorphisms $\Gamma\to F_{n}\to \Lambda.$ Otherwise, $\cat(\phi)=\cd(\phi)=2.$  
\end{thm}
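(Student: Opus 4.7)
The plan is to sandwich $\cd(\phi)$ and $\cat(\phi)$ between a lower bound coming from Proposition \ref{char. of group hom} and the upper bound $\min\{\cat(B\Gamma),\cat(B\Lambda)\}=2$, which holds because for genus at least $1$ the closed orientable surface is its own $K(\pi,1)$ and has category $2$. Combined with the general inequality $\cd(\phi)\le\cat(\phi)$ and the fact that $\phi$ is a nonzero epimorphism (so $\cd(\phi)\ge 1$ by the cited proposition), both invariants are pinned to $\{1,2\}$ and the theorem reduces to distinguishing these two values.

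For the ``if'' direction, suppose $\phi$ factors as $\Gamma\xrightarrow{\alpha}F_n\xrightarrow{\beta}\Lambda$ through epimorphisms. Applying the classifying-space construction, $B\phi$ factors up to homotopy through $BF_n$, a wedge of $n$ circles, a space of category $1$. The monotonicity of $\cat(\cdot)$ under factorisation (developed earlier in the paper for Scott's notion) then gives $\cat(\phi)\le 1$, which together with $\cd(\phi)\ge 1$ yields $\cat(\phi)=\cd(\phi)=1$.

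For the ``only if'' direction I would argue contrapositively: show that $\cd(\phi)\le 1$ forces $\phi$ to factor through a free group. The bridge is the geometric degree of $B\phi\colon M_g\to M_{g'}$. From $\cd(\phi)\le 1$, the pullback $B\phi^{*}\colon H^{2}(M_{g'};\Z)\to H^{2}(M_g;\Z)$ is zero, i.e.\ $\deg(B\phi)=0$. I would then invoke the classical Edmonds-style transversality argument: any degree-$0$ map between positive-genus closed orientable surfaces is homotopic to a map whose image omits a point, and so factors through $M_{g'}\smallsetminus\{\mathrm{pt}\}\simeq\bigvee_{2g'}S^{1}$; on fundamental groups this is exactly a factorisation of $\phi$ through a free group, and after replacing the middle free group by the image of the first map (still free by Nielsen--Schreier) both halves become surjective.

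The main obstacle is this transversality step---converting the purely cohomological fact that $B\phi^{*}=0$ in the top degree into the geometric statement that $B\phi$ compresses into a $1$-complex. A secondary point is the potential non-orientable case for ``closed surface'': substituting $\Z/2$ coefficients and a $\bmod\,2$ mapping degree leaves the overall architecture of the argument intact, and once the factorisation through a $1$-complex is in hand, passing to $\pi_1$ again produces the free-group factorisation required by the theorem.
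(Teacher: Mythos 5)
Your proposal is correct in substance, and its decisive half takes a genuinely different route from the paper. For the ``only if''/``otherwise'' direction the paper argues homotopy-theoretically: from $\cat(\phi)=1$ it uses the Ganea--Schwarz criterion to lift $B\phi$ to the first Ganea fibration $G_{1}(B\Lambda)\simeq\Sigma\Lambda$, a one-dimensional complex, so $\phi$ factors through the free fundamental group of that complex, and the ``otherwise'' clause is then read off from $1\le\cd(\phi)\le\cat(\phi)\le 2$. You instead work contrapositively from $\cd(\phi)\le 1$: specializing to trivial $\Z$ coefficients gives $\deg(B\phi)=0$, and the classical compression theorem for surfaces (Kneser; Edmonds: a degree-zero map of closed orientable surfaces is homotopic to a non-surjective map) pushes $B\phi$ into $M_{g'}\smallsetminus\{\mathrm{pt}\}\simeq\bigvee S^{1}$, giving the free factorization after replacing the middle group by the image of $\Gamma$ (free by Nielsen--Schreier, and still surjecting onto $\Lambda$). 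The step you flag as the ``main obstacle'' is a genuine classical theorem and should simply be cited rather than re-proved; with that citation your argument closes. The trade-off: the paper's Ganea argument works for arbitrary aspherical targets but genuinely needs $\cat(\phi)=1$ (its own remark notes that deducing the factorization from $\cd(\phi)=1$ alone is open for general aspherical groups), whereas your surface-specific degree argument proves the stronger statement that $\cd(\phi)=1$ by itself forces the factorization; this makes the ``otherwise'' conclusion immediate (not factoring $\Rightarrow\cd(\phi)=2\Rightarrow\cat(\phi)=2$), a point the paper's Step 2 treats rather tersely. The ``if'' direction and the bounds $\cd(\phi)\ge 1$, $\cat(\phi)\le\min\{\cat(\Gamma),\cat(\Lambda)\}=2$ coincide with the paper's; the aside on non-orientable surfaces is unnecessary since the paper restricts to orientable $M_{g}$, $M_{g'}$.
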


The paper is organized as follows. In Section 2, we recall the sectional category, Ganea-Schwarz approach, Berstein-Schwarz classes, the Lusternik-Schnirelman category, and the sequential topological complexity of space. In Section 3, we introduce the sequential topological complexity of a map borrowing the ideas from J.Scott's paper \cite{Sc}. We prove the basic inequalities in the terms of LS-category maps, cuplength, the sequential topological complexity of spaces, and composition of maps. In Section 4, we recall the cohomological dimension of group homomorphisms and the product formula. We prove Theorems 1.3 and Theorem 1.4, and give arbitrary product formula for group homomorphisms. In Section 5, we give an analogue of the D-topological complexity of space introduced by Farber and Oprea \cite{FO}. We prove the the same results hold for maps (Theorem \ref{FO}) as well. In Section 6, we prove Theorem 1.1 and Theorem 1.2. We give sharp bounds for other discrete group homomorphisms.

In the paper, we use the notation $H^*(\Gamma, A)$ for the cohomology of a group $\Gamma$ with coefficient in $\Gamma$-module $A$. The cohomology groups of a space $X$ with the fundamental group $\Gamma$ we denote as $H^*(X;A)$. Thus, $H^*(\Gamma,A)=H^*(B\Gamma;A)$ where $B\Gamma=K(\Gamma,1)$.

\section{Preliminaries}

\subsection{Sectional Category}

\begin{defn} Let $p:E\to B$ be a fibration. The sectional category of $p,$ denoted $\secat(p),$ is the smallest integer $k$ such that $B$ can be covered by $k+1$ open sets $U_{0},\cdots,U_{k}$ that each admit a partial section $s_{i}:U_{i}\to E$ of $p$.     
\end{defn}

Note that sectional category of a fibration $p: E\to B.$ was first introduced by Schwarz \cite{Sch} in 1961, under the name ``genus”. In 1978, James \cite{Ja} replaced the overworked term ``genus" by ``sectional category" and denoted the sectional category of $p$ by $\secat(p).$

We state the following the well-known properties of the sectional category. The proofs can be found in \cite{Sch, Ja}.

\begin{pro}
Let $p':E'\to B'$ be a pull-back fibration of $p:E\to B.$ a fibration. Then $\secat(p')\leq \secat(p).$      
\end{pro}

\begin{pro}
Let $p':E'\to B'$ and $p:E\to B.$ be fibrations. Then $\secat(p'\times p)\leq \secat(p')+\secat(p).$      \end{pro}

\subsection{Ganea-Schwarz's approach to $\secat$} Recall that an element of an iterated join $X_0*X_1*\cdots*X_n$ of topological spaces is a formal linear combination $t_0x_0+\cdots +t_nx_n$ of points $x_i\in X_i$ with $\sum t_i=1$, $t_i\ge 0$, in which all terms of the form $0x_i$ are dropped. Given fibrations $f_i:X_i\to Y$ for $i=0, ..., n$, the fiberwise join of spaces $X_0, ..., X_n$ is defined to be the space
\[
    X_0*_YX_1*_Y\cdots *_YX_n=\{\ t_0x_0+\cdots +t_nx_n\in X_0*\cdots *X_n\ |\ f_0(x_0)=\cdots =f_n(x_n)\ \}.
\]
The fiberwise join of fibrations $f_0, ..., f_n$ is the fibration 
\[
    f_0*_Y*\cdots *_Yf_n: X_0*_YX_1*_Y\cdots *_YX_n \longrightarrow Y
\]
defined by taking a point $t_0x_0+\cdots +t_nx_n$ to $f_i(x_i)$ for any $i$ such that $t_i \neq 0$. 

When $X_i=X$ and $f_i=f:X\to Y$ for all $i$  the fiberwise join of spaces is denoted by $*^{n+1}_YX$ and the fiberwise join of fibrations is denoted by $*_Y^{n+1}f$. 

\begin{thm}[\cite{Sch,Ja}]
Let $p:E\to B$ be a fibrarition with B normal. Then $\secat(p)\leq n$ if and only if the fibration $*_{B}^{n+1}p:*_{B}^{n+1}E\to B$ admits a section.
\end{thm}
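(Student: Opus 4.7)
The plan is to prove the two directions separately, with the normality of $B$ used crucially in the forward direction to obtain a partition of unity subordinate to the covering by domains of local sections.

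For the forward direction, assume $\secat(p)\leq n$, so there exist an open cover $U_0,\dots,U_n$ of $B$ and local sections $s_i:U_i\to E$ with $p\circ s_i=\mathrm{id}_{U_i}$. Since $B$ is normal and the cover is finite, I can shrink it to a cover by closed sets and then construct a partition of unity $\{\phi_i\}_{i=0}^n$ subordinate to $\{U_i\}$. Define
\[
    s:B\longrightarrow *_B^{n+1}E, \qquad s(b)=\sum_{i=0}^{n}\phi_i(b)\,s_i(b),
\]
where the convention of the iterated join drops any term with $\phi_i(b)=0$. Whenever $\phi_i(b)>0$ we have $b\in U_i$, so $s_i(b)$ is defined and $p(s_i(b))=b$; hence all surviving summands project to the same point $b\in B$, and the formal sum lies in the fiberwise join. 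Continuity follows from continuity of the $\phi_i$ and of each $s_i$ on the set $\{\phi_i>0\}\subset U_i$, together with the standard quotient description of the join. By construction $(*_B^{n+1}p)\circ s=\mathrm{id}_B$.

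For the reverse direction, assume a section $\sigma:B\to *_B^{n+1}E$ exists. For each $b\in B$ write $\sigma(b)=\sum_{i=0}^{n}t_i(b)\,x_i(b)$ using the barycentric coordinates on the iterated join, where $t_i(b)\geq 0$, $\sum_i t_i(b)=1$, and $x_i(b)\in E$ is specified whenever $t_i(b)>0$. Set $U_i=\{b\in B\mid t_i(b)>0\}$. The $U_i$ are open (since $t_i$ is continuous) and cover $B$ (since the coordinates sum to one, at least one is positive at every point). On $U_i$ define $\sigma_i:U_i\to E$ by $\sigma_i(b)=x_i(b)$; this is continuous because on the locus $t_i>0$ the coordinate $x_i$ depends continuously on the point of the join (equivalently, the $i$-th coordinate map of the join is continuous on that open subset). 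The fiberwise join condition forces $p(x_i(b))=b$ whenever $t_i(b)>0$, so $\sigma_i$ is a local section of $p$. This gives a cover of $B$ by $n+1$ open sets each admitting a section, i.e.\ $\secat(p)\leq n$.

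The main obstacle I expect is the careful bookkeeping around the "dropped" formal summands in the join: in the forward direction one must check that varying the set of indices with $\phi_i(b)>0$ still yields a continuous map into $*_B^{n+1}E$ (this is where the standard quotient topology on the join together with the partition of unity ensures continuity at points where some $\phi_i$ vanishes), and in the reverse direction one must verify that the barycentric coordinates $t_i$ and the partial maps $x_i$ are honestly continuous on the open set $\{t_i>0\}$. Both points are routine once the topology of $X_0*_Y\cdots*_YX_n$ is described as a quotient of $\{(t_0,\dots,t_n,x_0,\dots,x_n)\mid \sum t_i=1,\ p(x_i)\text{ agree when }t_i>0\}\subset \Delta^n\times E^{n+1}$, and everything else in the proof reduces to direct verification.
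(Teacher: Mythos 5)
Your argument is correct, and it is essentially the classical proof of Schwarz (and James) that the paper itself does not reproduce but simply cites: partition of unity subordinate to the covering in the forward direction, barycentric coordinates of a section in the reverse direction. One small refinement in the forward direction: continuity of $s(b)=\sum_i\phi_i(b)s_i(b)$ at a point $b_0$ is not quite guaranteed by ``continuity of $s_i$ on $\{\phi_i>0\}$'' alone when one works with the quotient topology on the fiberwise join; the point where subordination really enters is that for every index $j$ with $b_0\notin\operatorname{supp}\phi_j$ there is a whole neighborhood of $b_0$ on which $\phi_j\equiv 0$, while for the remaining indices $i$ one has $b_0\in\operatorname{supp}\phi_i\subset U_i$, so that on a suitable neighborhood of $b_0$ the map factors through the join of only those coordinates, all of which are everywhere defined and continuous there. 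With that observation your ``routine bookkeeping'' closes, and the reverse direction (where normality is indeed not needed) is complete as written.
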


\subsection{Berstein–Schwarz-Schwarz cohomology class}

The Berstein–Schwarz-Schwarz class of a discrete group $\Gamma$ is the first obstruction $\beta_{\Gamma}$ to a lift of $B\Gamma=K(\Gamma,1)$ to the universal covering $E\Gamma$. 
Note that $\beta_{\Gamma}\in H^1(\Gamma,I(\Gamma))$ where $I(\Gamma)$ is the augmentation ideal of the group ring $\Z\Gamma$~\cite{Be},\cite{Sch}.

\begin{thm}[Universality~\cite{DR},\cite{Sch}]\label{universal}
For any cohomology class $\alpha\in H^k(\Gamma,L)$, there is a homomorphism of $\Gamma$-modules $I(\Gamma)^k\to L$ such that the induced homomorphism for cohomology takes $(\beta_{\Gamma})^k\in H^k(\Gamma,I(\Gamma)^{\otimes k})$ to $\alpha$,  where $I(\Gamma)^{\otimes k}=I(\Gamma)\otimes\dots\otimes I(\Gamma)$ and $(\beta_{\Gamma})^k=\beta_{\Gamma}\smile\dots\smile\beta_{\Gamma}$.
\end{thm}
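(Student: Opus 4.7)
The plan is to interpret the Berstein--Schwarz class and its cup powers in $\mathrm{Ext}$, and then invoke the lifting property of projective modules. Using $H^k(\Gamma,-)\cong\mathrm{Ext}^k_{\Z\Gamma}(\Z,-)$, the class $\beta_\Gamma\in H^1(\Gamma,I(\Gamma))$ is represented by the augmentation short exact sequence
$$0\longrightarrow I(\Gamma)\longrightarrow \Z\Gamma\longrightarrow \Z\longrightarrow 0.$$
Since the cup product on group cohomology agrees with the Yoneda product on $\mathrm{Ext}$, the $k$-fold cup $(\beta_\Gamma)^k$ is represented by splicing $k$ copies of this sequence, each tensored over $\Z$ with the appropriate power of $I(\Gamma)$ and equipped with the diagonal $\Gamma$-action. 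This yields a $k$-step extension
$$0\longrightarrow I(\Gamma)^{\otimes k}\longrightarrow \Z\Gamma\otimes I(\Gamma)^{\otimes(k-1)}\longrightarrow\cdots\longrightarrow \Z\Gamma\otimes I(\Gamma)\longrightarrow \Z\Gamma\longrightarrow\Z\longrightarrow 0.$$

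The key observation is that each middle term $\Z\Gamma\otimes I(\Gamma)^{\otimes j}$ is $\Z\Gamma$-free. This uses the classical isomorphism $\Z\Gamma\otimes_\Z M\cong \Z\Gamma\otimes_\Z M^{\mathrm{triv}}$, via $g\otimes m\mapsto g\otimes g^{-1}m$, together with the fact that each tensor power $I(\Gamma)^{\otimes j}$ is $\Z$-free, since $I(\Gamma)$ itself has $\Z$-basis $\{g-1:g\in\Gamma\setminus\{1\}\}$. Consequently the splice above is a partial free resolution of the trivial module $\Z$, truncated at level $k$.

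Given any $\alpha\in H^k(\Gamma,L)=\mathrm{Ext}^k_{\Z\Gamma}(\Z,L)$, represented by some $k$-fold extension $0\to L\to E_{k-1}\to\cdots\to E_0\to \Z\to 0$, I would apply the comparison theorem for projective resolutions to lift the identity on $\Z$ to a chain map from the Berstein--Schwarz splice into this extension. The leftmost component of that chain map is the required $\Gamma$-module homomorphism $f:I(\Gamma)^{\otimes k}\to L$, and naturality of the Yoneda identification of $\mathrm{Ext}$ immediately yields $f_*((\beta_\Gamma)^k)=\alpha$. The main technical obstacle is the identification of the cup product with Yoneda splicing and the verification that the resulting splice has precisely the projective middle terms claimed above; both are standard but require careful bookkeeping with the diagonal $\Gamma$-action on tensor products. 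Once those identifications are made, universality reduces to the projective lifting property and is essentially formal.
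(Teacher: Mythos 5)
The paper gives no proof of Theorem \ref{universal}; it is quoted from \cite{DR} and \cite{Sch}, and your argument is correct and essentially reproduces the standard proof found there: the $k$-fold splice of the augmentation sequence, tensored over $\Z$ with powers of $I(\Gamma)$ carrying the diagonal action, is a partial free resolution of $\Z$ whose $k$-th syzygy is $I(\Gamma)^{\otimes k}$ and whose class is $(\beta_{\Gamma})^k$, after which the comparison/lifting argument produces the required $\Gamma$-map $I(\Gamma)^{\otimes k}\to L$. The two points you flag as bookkeeping are exactly the ones that need (routine) verification and both go through: exactness of each tensored sequence holds because $I(\Gamma)^{\otimes j}$ is $\Z$-free, the untwisting isomorphism $\Z\Gamma\otimes_{\Z}M\cong \Z\Gamma\otimes_{\Z}M^{\mathrm{triv}}$ makes the middle terms free so the lifting only ever uses projectivity where it is available, and the last map lands in $L$ by exactness of the target extension at its $(k-1)$-st term, so that a morphism of $k$-extensions over $\mathrm{id}_{\Z}$ with left-hand component $f$ indeed gives $f_{*}((\beta_{\Gamma})^{k})=\alpha$.
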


\subsection{Lusternik-Schnirelmann category}

\begin{defn} The Lusternik-Schnirelmann category (for short, LS-category) of a topological space $X$, denoted $\cat(X),$ is the minimal number $k$ such that $X$ is covered by $k+1$ open sets $U_{0},...,U_{k}$ that are each contractible in $X$.  
\end{defn}

\begin{defn}
The LS-category of a map $f:X\to Y$, denoted $\cat(f),$ is the minimal number $k$ such that $X$ is covered by $k+1$ open sets $U_{0},...,U_{k}$ with nullhomotopic restrictions $f|_{U_i}: U_i \to Y$ for all $i$.
\end{defn}

For a path connected space $X$, we turn an inclusion of a point $*\to X$ into a fibration $p^X_0:G_0(X)\to X$, whose fiber is known to be the loop space $\Omega X$.  The $n$-th Ganea space of $X$ is defined to be the space $G_n(X)=*_X^{n+1}G_0(X)$, while the $n$-th Ganea fibration $p^X_n:G_n(X)\to X$
is the fiberwise join $\ast^{n+1}_Xp^X_0$. Then the fiber of
$p^X_n$ is $\ast^{n+1}\Omega X$.

\subsection{The sequential topological complexity}

\begin{defn}
    Let $X$ be a path-connected space.
\begin{enumerate}
\item A {\em sequential motion planner} on a subset $Z\subset X^{r}$ is a map $s:Z\to PY$ such that $s(x_{0},x_{1},\cdots,x_{r-1})(\frac{j}{r-1})=x_{j}$ for all $j=0,\cdots,r-1.$
\item A {\em sequential motion planning algorithm} is a cover of $X^{r}$ by sets $Z_{0},\cdots,Z_{k}$ such that on each $Z_{i}$ there is some sequential motion planner $s_{i}:Z_{i}\to PY.$
\item The {\em sequential topological complexity of a space $X$}, denoted $TC_{r}(X),$ is the minimal number $k$ such that $X^{r}$ is covered by $k+1$ open sets $U_{0},\cdots,U_{k}$ on which there are sequential motion planners. If no such $k$ exists, we set $TC_{r}(X)=\infty.$
\end{enumerate}
    
\end{defn}

Note that above definition of the sequential topological complexity agrees with Rudyak's original definition \cite{Rud} using exponential corresponds of path space $X^{J_{r}},$ where $J_{r}, r\in N$ is the wedge of $r$ closed intervals $[0,1]_{i},i=1,..,r$  where the zero points $0_{i}\in [0,1]_{i}$ are identified. The sequential topological complexity of a path-connected space $X,$ where $r\geq 2,$ is defined as $TC_{r}(X)=\secat(\Updelta_{0}),$ where $$\Updelta_{0}:X^{J_{r}}\to X^{r}, \Updelta_{0}(\gamma)=(\gamma(1_{1}),\gamma(1_{2}),..,\gamma(1_{r-1}),\gamma(1_{r}))$$ is a fibration.  This correspondence is known as the exponential correspondence of homeomorphic path spaces $PX$ and $X^{J_{r}}.$
$$\begin{tikzcd}
PX \arrow[rr] \arrow[dr, "p_{r}"]& & X^{J_{r}} \arrow[ll] \arrow[dl, "\Updelta_{0}"] \\
    & X^{r} &
\end{tikzcd} $$ where $p_{r}(\gamma)=\big(\gamma(0),\gamma(\frac{1}{r-1}),\cdots,\gamma(\frac{r-2}{r-1}),\gamma(1)\big)$

 Given $\gamma(t)\in PX,$ we get the component-wise paths $\gamma_{i}(t)=\gamma \left(\dfrac{(i-1)t}{r-1} \right)\in X^{J_{r}},i=1,\cdots,r.$ On the other hand, given the component-wise paths $(\alpha_{1}(t),\cdots,\alpha_{r}(t))$ we can concatenate them the following way $\alpha_{1}(\bar{\alpha}_{2}\alpha_{3})\cdots (\bar{\alpha}_{r-1}\alpha_{r})$ to get the path in $PX$, where $\bar{\alpha_{i}}$ is the reverse path of $\alpha_{i}$ for $i=1,\cdots,r-1.$ Thus, we use these two different evaluation fibrations $p_{r}$ and $\Updelta_{0}$ for path space $PX$ interchangeably.

We lists some known properties of the sequential topological complexity. Proofs can be found in \cite{Rud, BGRT}.
\begin{pro} Let X and Y be path-conected, normal spaces. Then
\begin{enumerate}
    \item $\cat(X^{r-1})\leq TC_{r}(X) \leq \cat(X^{r})$.
    \item $TC_{r}(X\times Y)\leq TC_{r}(X)+TC_{r}(Y)$.
    \item If X is an ANR, then $\cuplen(\ker \Updelta^{*})\leq TC_{r}(X)$.
    \item $TC_{r}(X)\leq n$ if and only if $p_{r,n}$ has a section where $p_{r,n}=*_{X^{r}}^{n+1}p_{r}$ is the fiberwise join of a fibrartion $p_{r}.$ 
  
\end{enumerate}
\end{pro}

\section{The pullback of sequential topological complexity of map}

Let $f:X\to Y$ be a map. Let $X^{r}$ and $Y^{r}$ be the Cartisian product of $r$ copies of $X$ and $Y$ respectively, i.e $X^{r}:=X\times\cdots\times X$ and $Y^{r}:=Y\times\cdots\times Y.$ Let us denote $f^{r}:=f\times\cdots\times f:X^{r}\to Y^{r}$ and elements of $X^{r}$ and $Y^{r}$ are vectors $\bar{x}=(x_{0},\cdots,x_{r-1})$ and $\bar{y}=(y_{0},\cdots,y_{r-1})$ respectively.

\begin{defn} Let $f:X\to Y$ be the above map.
\begin{enumerate} 
\item A {\em sequential $f$-motion planner} on a subset $Z\subset X^{r}$ is a map $f_{Z}:\Z\to PY$ such that $f_{Z}(\bar{x})(\frac{j}{r-1})=f_{Z}(x_{0},x_{1},\cdots,x_{r-1})(\frac{j}{r-1})=f(x_{j})$ for all $j=0,\cdots,r-1.$
\item A {\em sequential $f$-motion planning algorithm} is a cover of $X^{r}$ by sets $Z_{0},\cdots,Z_{k}$ such that on each $Z_{i}$ there is some sequential $f$-motion planner $f_{i}:Z_{i}\to PY.$
\item The {\em (pullback) sequential topological complexity of map f}, denoted $TC_{r}(f),$ is the minimal number $k$ such that $X^{r}$ is covered by $k+1$ open sets $U_{0},\cdots,U_{k}$ on which there are sequential $f$-motion planners. If no such $k$ exists, we set $TC_{r}(f)=\infty.$
\end{enumerate}
\end{defn}
Note that if $r=2,$ we recover Scott's topological complexity for maps. Further, if map $f$ is identity on space $X,$ we get Rudyak's sequential topological complexity of space $X.$

We get the following analogous theorem of Scott \cite[Theorem 3.4]{Sc} for the sequential topological complexity for maps.

\begin{thm}\label{sequential TC}
 Let $f:X\to Y$ be a map and let $Z\subset X^{r}$. The following are equivalent:
 \begin{enumerate}
     \item There is a partial section $s:Z\to (f^{r})^{*}PY$ of the pullback fibrations $(f^{r})^{*}\Updelta_{0}^{Y}$.
     \item There is a sequential $f$-motion planner $f_{Z}:Z\to PY$.
     \item The projections from $f^{r}(Z)$ to the j+1 factor of $Y^{r}$ are homotopic, where $j=0,..,r-1$. 
     \item  $f^{r}|_{Z}$ can be deformed into the diagonal $\Updelta Y$ of $Y^{r}$. 
     \item There is a map $h:Z\to X$ such that $f^{r}\Updelta h$ is homotopic to $f^{r}|_{Z}$.

 \end{enumerate}
\end{thm}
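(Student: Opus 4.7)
The plan is to traverse the cycle $(1)\Leftrightarrow(2)\Rightarrow(3)\Leftrightarrow(4)\Rightarrow(5)\Rightarrow(2)$, adapting Scott's proof for $r=2$ to the sequential setting. The equivalence $(1)\Leftrightarrow(2)$ is the universal property of the pullback: via the exponential identification $PY\cong Y^{J_r}$, a section $s:Z\to(f^r)^*PY$ of $(f^r)^*\Updelta_0^Y$ corresponds to a map $\tilde s:Z\to PY$ with $\Updelta_0^Y\circ\tilde s=f^r|_Z$, and unpacking this coordinatewise yields $\tilde s(\bar x)(\tfrac{j}{r-1})=f(x_j)$, which is the defining condition of a sequential $f$-motion planner.

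For $(2)\Rightarrow(3)$, the formula $H_{jk}(\bar x,t)=f_Z(\bar x)\bigl(\tfrac{(1-t)j+tk}{r-1}\bigr)$ gives a homotopy between the $(j{+}1)$-st and $(k{+}1)$-st coordinate projections of $f^r|_Z$. The direction $(4)\Rightarrow(3)$ is immediate, since all coordinate projections coincide on $\Updelta Y$. For $(3)\Rightarrow(4)$, fix homotopies $G_j:\mathrm{pr}_{j+1}\circ f^r|_Z\simeq \mathrm{pr}_1\circ f^r|_Z$ for $j=1,\ldots,r-1$ and bundle them into $(\mathrm{pr}_1\circ f^r|_Z,G_1,\ldots,G_{r-1}):Z\times I\to Y^r$, whose terminal value factors through $\Updelta Y$.

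For $(4)\Rightarrow(5)$, take $h:=\mathrm{pr}_1|_Z:Z\to X$, so that $f^r\Updelta h(\bar x)=(f(x_0),\ldots,f(x_0))$. Writing the deformation of $f^r|_Z$ into $\Updelta Y$ coordinatewise as $H=(H_0,\ldots,H_{r-1})$ with common terminal value $g_0(z)$, the concatenations $H_0(z,\cdot)\cdot\overline{H_j(z,\cdot)}$ in each coordinate assemble into a single homotopy $f^r\Updelta h\simeq f^r|_Z$, giving $(5)$.

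Finally, for $(5)\Rightarrow(2)$, given $h$ together with a homotopy $H=(H_0,\ldots,H_{r-1}):Z\times I\to Y^r$ from $f^r\Updelta h$ to $f^r|_Z$, each path $H_j(z,\cdot)$ runs from the ``hub'' $f(h(z))$ to $f(x_j)$. The reparameterized concatenation $\overline{H_{j-1}(z,\cdot)}\cdot H_j(z,\cdot)$ on the subinterval $[\tfrac{j-1}{r-1},\tfrac{j}{r-1}]$ (for $j=1,\ldots,r-1$) produces a path $f_Z(z)\in PY$ satisfying $f_Z(z)(\tfrac{j}{r-1})=f(x_j)$; this is precisely the construction from the exponential correspondence recalled in \S 2. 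The main technical care lies in this last step, where one must verify that the concatenated path depends continuously on $z$ and that the pieces match at the junctions $\tfrac{j}{r-1}$. Both are standard properties of path concatenation, so no step is individually difficult; the overall work is simply in bookkeeping the parameterizations.
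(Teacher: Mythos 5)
Your argument is correct, and it is the same circle of ideas as the paper's proof (explicit path and homotopy concatenations adapted from Scott's $r=2$ argument), but you route the equivalences differently: the paper proves $(1)\Leftrightarrow(2)$, $(2)\Rightarrow(3)$, $(3)\Rightarrow(4,5)$ simultaneously, $(5)\Rightarrow(4)$, and $(4)\Rightarrow(2)$, whereas you close the cycle as $(2)\Rightarrow(3)\Rightarrow(4)\Rightarrow(5)\Rightarrow(2)$. The differences this creates are minor but real. In $(3)\Rightarrow(4)$ you use homotopies $G_j$ from each coordinate projection to the first one and keep the first coordinate constant, which is cleaner than the paper's concatenated paths $\gamma^i_{\bar x}$ and its $t(1-t)$ reparametrization of the first coordinate; the price is that you no longer get $(5)$ for free (the paper reads $(5)$ off its $(3)\Rightarrow(4)$ construction because its deformation ends exactly at $(f(x_0),\dots,f(x_0))=f^r\Updelta\,\mathrm{pr}_0$), so you need the extra concatenation step $H_0(z,\cdot)\cdot\overline{H_j(z,\cdot)}$ in $(4)\Rightarrow(5)$, which you supply correctly. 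Likewise your $(5)\Rightarrow(2)$ zig-zags each subinterval $[\frac{j-1}{r-1},\frac{j}{r-1}]$ through the hub $f(h(z))$, while the paper's $(4)\Rightarrow(2)$ zig-zags through the terminal diagonal point; both are the same bookkeeping and both verify the junction values $f_Z(z)(\frac{j}{r-1})=f(x_j)$. One small point of hygiene shared with the paper: statement $(3)$ literally speaks of projections defined on the image $f^r(Z)$, while your homotopies (like the paper's) are parametrized by $\bar x\in Z$, i.e.\ they are homotopies of the composites $\mathrm{pr}_j\circ f^r|_Z$; since you use this version consistently in both $(2)\Rightarrow(3)$ and $(3)\Rightarrow(4)$, the cycle of implications is unaffected.
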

\begin{proof} 
\textbf{$(1 \Rightarrow 2)$} Let $s:Z\to (f^{r})^{*}PY$ be a partial section of the pullback fibration $(f^{r})^{*}\Updelta_{0}^{Y}$ and consider the pullback diagram of the maps $f^{r}$ and $\Updelta_{0}^{Y}$
$$
\begin{tikzcd}
    (f^{r})^{*}PY \arrow[d,"(f^{r})^{*}\Updelta_{0}^{Y}"] \arrow[r, "\bar{f^{r}}"] & PY \arrow[d, "\Updelta_{0}^{Y}"]\\
    X^{r} \arrow[r, "f^{r}"] & Y^{r}
\end{tikzcd}
$$
where $(f^{r})^{*}PY=\{ (\bar{x},\gamma)\in X^{r}\times PY| \gamma(\frac{i}{r-1})=f(x_{i}) \, \text{for all} \, i=0,..,r-1.\}$

We define the sequential $f$-motion planner $f_{Z}:Z\to PY$ by $f_{Z}=\bar{f^{r}} s$. 
It is easy to see that 
$$\Updelta_{0}^{Y}f_{Z}=\Updelta_{0}^{Y}(\bar{f^{r}})s=(f^{r})((f^{r})^{*}\Updelta_{0}^{Y})s=(f^{r})Id_{Z}=f^{r}|_{Z}.$$ Hence, $f_{Z}$ is a sequential $f$-motion planner on Z.

\textbf{$(2 \Rightarrow 1)$} It just follows from the definition of pullback of maps. (See \cite[Theorem 3.4]{Sc}).

\textbf{$(2 \Rightarrow 3)$} Let $pr_{j}:f^{r}(Z)\subset Y^{r}\to Y$ be a projection onto the $j^{th}$ factor of $Y^{r}$. It suffices to show that $pr_{j}$ is homotopic to $pr_{j+1}$ for all $j=0,..,r-2.$ By (2), there is a sequential $f$-motion planner on $Z$ and let $\bar{x}=(x_{0},\cdots,x_{r-1})\in Z.$

We define the homotopy $$H_{j}(\bar{x},t):=f_{Z}(\bar{x})\big(\frac{t+j}{r-1}\big).$$ Then 
$H_{j}(\bar{x},0)=f_{Z}(\bar{x})(\frac{j}{r-1})=f(x_{j})=pr_{j}(f^{r}(\bar{x}))$
and
$H_{j}(\bar{x},1)=f_{Z}(\bar{x})(\frac{j+1}{r-1})=f(x_{j+1})=pr_{j+1}(f^{r}(\bar{x})).$ Thus, all projections from $f^{r}(Z)$ to the $j^{th}$ factor of $Y^{r}$ are homotopic for $j=0,\cdots,r-1.$

\textbf{$(3 \Rightarrow 4, 5)$} Since any two projections from $f^{r}(Z)$ to the $j^{th}$ factor of $Y^{r}$ are homotopic, we fix a homotopy $H_{j}:f^{r}(Z)\times I\to Y^{r}$ from $pr_{j}$ to $pr_{j+1}$ for $j=0,\cdots,r-2.$ For given $\bar{x}\in Z,$ this homotopy $H_{i}(f^{r}(\bar{x}),t)$ is a path from $f(x_{j})$ to $f(x_{j+1})$ and denote this path as $\alpha_{\bar{x}}^{j}.$ We define a concatenation of paths as $\gamma_{\bar{x}}^{i}=*^{i}_{j=0}\alpha_{\bar{x}}^{j}.$ Note that $\gamma_{\bar{x}}^{i}$ is a path from $f(x_{0})$ to $f(x_{i}).$ Now we define a homotopy $H:f^{r}(Z)\times I\to Y^{r}$ as

$$H(f^{r}(\bar{x}),t)=(H_{0}(f^{r}(\bar{x}),t(1-t)),\gamma_{\bar{x}}^{1}(1-t),\cdots,\gamma_{\bar{x}}^{r-1}(1-t)).$$

Then $H(f^{r}(\bar{x}),0)=(H_{0}(f^{r}(\bar{x}),0),\gamma_{\bar{x}}^{1}(1),\cdots,\gamma_{\bar{x}}^{r-1}(1))=(f(x_{0}),f(x_{1}),\cdots,f(x_{r-1}))=f^{r}(\bar{x})$ and 
$H(f^{r}(\bar{x}),1)=(H_{0}(f^{r}(\bar{x}),0),\gamma_{\bar{x}}^{1}(0),\cdots,\gamma_{\bar{x}}^{r-1}(0))=(f(x_{0}),f(x_{0}),\cdots,f(x_{r-1}))\in \Updelta(Y).$ This gives a deformation of $f^{r}|_{Z}$ into $\Updelta(Y),$ showing (4). Also observe that $h$ can be chosen to be $pr_{0}:Z\subset X^{r}\to X$ is the projection map into the first coordinate.  

\textbf{$(5 \Rightarrow 4)$} Note that the image of $f^{r}\Updelta h$ will always be contained in $\Updelta(Y).$

\textbf{$(4 \Rightarrow 2)$} Let $H:Z\times I\to Y^{r}$ be a deformation of $f^{r}|_{Z}$ to $\Updelta(Y).$ We define a map $f_{Z}:Z\to Y^{r}$ 
$$f_{Z}(\bar{x})(t)=\begin{cases} p_{j}H(\bar{x}, 2(r-1)t-2j) & \mbox{if} \, \, \frac{j}{r-1}\leq t \leq \frac{2j+1}{2(r-1)} \\ p_{j+1}H(\bar{x}, 2j+2-2(r-1)t) & \mbox{if} \, \, \frac{2j+1}{2(r-1)}\leq t \leq \frac{j+1}{(r-1)}\end{cases}$$
where $j=0,\cdots,r-2$ and $pr_{j}$ and $pr_{j+1}$ are projection of $Y^{r}$ to $Y$ with $j^{th}$ and $(j+1)^{th}$ coordinates respectively. Then $f_{Z}$ is well-defined and continous since $H(\bar{x},t)\in \Updelta(Y)$ for all $\bar{x}\in Z.$ Moreover, 
$$f_{Z}(\bar{x})(\frac{j}{r-1})=pr_{j}H(\bar{x}, 2(r-1)\frac{j}{r-1}-2j)=pr_{j}H(\bar{x},0)=pr_{j}(f^{r}(\bar{x}))=f(x_{j}),$$ and 
$$f_{Z}(\bar{x})(\frac{j+1}{r-1})=pr_{j+1}H(\bar{x}, 2j+2-2(r-1)\frac{j+1}{r-1})=pr_{j+1}H(\bar{x},0)=pr_{j+1}(f^{r}(\bar{x}))=f(x_{j+1})$$
so that $f_{Z}$ is an sequential $f$-motion planner on Z.

\end{proof}

We can also define the sequential topological complexity of a map $f:X\to Y,$ $TC_{r}(f):=\secat((f^{r})^{*}\Updelta_{0}^{Y}).$ It was well-defined by Theorem \ref{sequential TC} part 2. 
Since sectional category is homotopy invariant \cite{Sch}, the sequential topological complexity of map is also homotopy invariant.  

We will use the following proposition on the proof of main results in this chapter. This result is a sequential analogue of \cite[Proposition 3.8]{Sc} which relates the LS-category of map and the sequential topological complexity of map.

\begin{prop}\label{TC}
    Let $f:X\to Y$ be a map. Then

\noindent 1. $TC_{r}(f)\leq \min \{TC_{r}(X), TC_{r}(Y)\}$ \\
2. $\cat(f^{r-1})\leq TC_{r}(f)\leq \cat(f^{r})$
    
\end{prop}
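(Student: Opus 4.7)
My plan is to verify each inequality directly, using the characterizations of sequential $f$-motion planners supplied by Theorem \ref{sequential TC}.

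For part (1), I would transport sequential motion planners along $f$ in two directions. Given an open cover $\{U_i\}$ of $X^r$ realizing $TC_r(X)$ with motion planners $s_i: U_i \to PX$, the post-compositions $Pf\circ s_i : U_i \to PY$ defined by $\bar{x}\mapsto f\circ s_i(\bar{x})$ are sequential $f$-motion planners because $f(s_i(\bar{x})(j/(r-1))) = f(x_j)$; this gives $TC_r(f) \leq TC_r(X)$. For $TC_r(f) \leq TC_r(Y)$, I would pull back a cover $\{V_i\}$ of $Y^r$ with motion planners $t_i : V_i \to PY$ along $f^r$: the preimages $(f^r)^{-1}(V_i)$ form an open cover of $X^r$, and $t_i \circ f^r$ is a sequential $f$-motion planner on each.

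For the upper bound $TC_r(f) \leq \cat(f^r)$ in part (2), I would use the equivalence $(2)\Leftrightarrow(4)$ of Theorem \ref{sequential TC}. Given an open cover $\{U_i\}$ of $X^r$ with $f^r|_{U_i}$ nullhomotopic in $Y^r$, I would first deform $f^r|_{U_i}$ to a constant map and then, using path-connectedness of $Y^r$, further homotope that constant to a point of the diagonal $\Updelta(Y)$. Concatenating produces a deformation of $f^r|_{U_i}$ into $\Updelta(Y)$, which by Theorem \ref{sequential TC} yields a sequential $f$-motion planner on $U_i$.

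The lower bound $\cat(f^{r-1}) \leq TC_r(f)$ is the substantive step and I expect it to be the main obstacle. My approach is to fix a basepoint $x_0 \in X$ and use the slice inclusion $g : X^{r-1} \hookrightarrow X^r$, $(x_1,\ldots,x_{r-1})\mapsto(x_0,x_1,\ldots,x_{r-1})$. Given a cover $\{W_i\}$ of $X^r$ realizing $TC_r(f)$ with $f$-motion planners $f_i$, the open preimages $U_i := g^{-1}(W_i)$ cover $X^{r-1}$. For $\bar{x}=(x_1,\ldots,x_{r-1})\in U_i$, the path $\gamma_{\bar{x}} := f_i(g(\bar{x}))\in PY$ satisfies $\gamma_{\bar{x}}(j/(r-1))=f(x_j)$ for $j=0,1,\ldots,r-1$. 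I would then define a nullhomotopy $H: U_i \times I \to Y^{r-1}$ of $f^{r-1}|_{U_i}$ by sliding the $j$-th coordinate backwards along $\gamma_{\bar{x}}$ from $f(x_j)$ to the common endpoint $f(x_0)$, concretely
\begin{equation*}
H(\bar{x},t) = \bigl(\gamma_{\bar{x}}((1-t)/(r-1)),\ \gamma_{\bar{x}}(2(1-t)/(r-1)),\ \ldots,\ \gamma_{\bar{x}}(1-t)\bigr),
\end{equation*}
which interpolates between $f^{r-1}|_{U_i}$ at $t=0$ and the constant map $(f(x_0),\ldots,f(x_0))$ at $t=1$. Joint continuity follows from continuity of $f_i$; the only real work is bookkeeping the reparameterizations so that the sample times and endpoints line up correctly. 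The conceptual content is the observation that an $f$-motion planner on $X^r$, evaluated after fixing one coordinate, produces exactly the coherent family of paths in $Y$ needed to contract the remaining $r-1$ coordinates simultaneously.
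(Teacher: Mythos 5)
Your proposal is correct and follows essentially the same route as the paper: post-composing (resp.\ pulling back) motion planners for part (1), restricting to the basepointed slice of $X^{r}$ and sliding each coordinate back along the planner's path to get the nullhomotopy of $f^{r-1}$, and converting a nullhomotopy of $f^{r}|_{U_i}$ into a deformation into $\Updelta(Y)$ for the upper bound. The only cosmetic difference is that you invoke the equivalence $(4)\Leftrightarrow(2)$ of Theorem~\ref{sequential TC} for the upper bound, whereas the paper re-runs that concatenation construction explicitly; your slice-homotopy formula is a cleaner version of the paper's $H_i$.
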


\begin{proof}
1.  It is clear that $TC_{r}(f)\leq TC_{r}(Y)$ by the sectional category. 
 Now suppose that $TC_{r}(X)=k$ and let $U_{0},U_{1},...,U_{k}$ be the open cover of $X^{r}$ with motion planners $s_{i}:U_{i}\to X^{r}.$ We define $f_{i}:U_{i}\to PY$ by $f_{i}=fs_{i}.$ Then it is easy to see that $$f_{i}(x_{0},x_{1},..,x_{r-1})(\frac{j}{r-1})=f(s_{i}(x_{0},...,x_{r-1})(\frac{j}{r-1}))=f(x_{j})$$ for all $j=0,..,r-1.$ 
Therefore, $f_{i}$ are the sequential $f$-motion planners and $TC_{r}(f)\leq k.$

2. First we prove that $\cat(f^{r-1})\leq TC_{r}(f).$ Suppose $TC_{r}(f)=k$ and let $U_{0},U_{1},..,U_{k}$ be an open cover of $X^{r}$ with sequential $f$-motion planners $f_{i}:U_{i}\to PY.$ Let $a$ and $b$ be a choice of base points for $X$ and $Y$ respectively such that $f(a)=b.$ Let $\gamma_{j}$ be paths from $b$ to $f(x_{j})$ for all $j=1,..,r-1$ as we constructed in the proof of Theorem \ref{sequential TC} \textbf{$(3 \Rightarrow 4, 5)$}. We define $V_{i}=\{\bar{x}=(x_{1},..,x_{r-1})|(a,x_{1},..,x_{r-1})\in U_{i} \}.$ Then $V_{0},..,V_{k}$ is clearly an open cover for $X^{r-1}.$ Now we define a nullhomotopy $H_{i}:V_{i}\times I\to Y^{r-1}$ of $f^{r-1}$ on $V_{i}$ by $$H_{i}(\bar{x},t)=(f_{i}(a,\bar{x})(t(1-t)), \gamma^{1}(1-t),...,\gamma^{r-2}(1-t)).$$

Then $$H_{i}(\bar{x},0)=(f(a),\gamma^{1}(1),..,\gamma^{r-1}(1))=(b,f(x_{1}),...,f(x_{r-1}))$$
and 
$$H_{i}(\bar{x},1)=(f(a),\gamma^{1}(0),...,\gamma^{r-1}(0))=(b,b,...,b)$$

Therefore, $\cat(f^{r-1})\leq k.$

Next, we show that $TC_{r}(f)\leq \cat(f^{r}).$ Suppose that $\cat(f^{r})=k$ and let $U_{0},U_{1},..,U_{k}$ be an open cover of $X^{r}$ such that each $f^{r}|_{U_{i}}$ is nullhomotopic. Let $H_{i}:U_{i}\times I\to Y^{r}$ be a nullhomotopy of $f^{r}|_{U_{i}}$ to some $\bar{y}^{i}=(y_{0}^{i},..,y_{r-1}^{i})\in Y^{r}.$ Since $Y$ is a path-connected, let $\beta_{j}^{i}$ be paths from $y_{j}^{i}$ to $y_{j+1}^{i}$ for all $j=0,..,r-2.$  Now we define $f_{i}:U_{i}\to PY$ by 

$$f_{i}(\bar{x})(t)=\begin{cases} pr_{j}(H_{i}(\bar{x}, 3(r-1)t-3j)), & \mbox{if} \, \, \frac{j}{r-1}\leq t \leq \frac{3j+1}{3(r-1)} 
\\ \beta_{j}^{i}(3(r-1)t-3j-1) & \mbox{if} \, \,  \frac{j+1}{3(r-1)}\leq t \leq \frac{3j+2}{3(r-1)} 
\\ pr_{j+1}(H_{i}(\bar{x}, 3(r-1)t-3j-2)) & \mbox{if} \, \,  \frac{3j+2}{3(r-1)}\leq t \leq \frac{j+1}{r-1} 
\\pr_{r-2}(H_{i}(\bar{x}, 3(r-1)t-3(r-2))) & \mbox{if} \, \,  \frac{r-2}{r-1}\leq t \leq \frac{3(r-2)+1}{3(r-1)} 
\\ \beta_{r-2}^{i}(3(r-1)t-3(r-2)-1) & \mbox{if} \, \,  \frac{3(r-2)+1}{3(r-1)}\leq t \leq \frac{3(r-2)+2}{3(r-1)} 
\\ pr_{r-1}(H_{i}(\bar{x}, 3(r-1)-3(r-1)t)) & \mbox{if} \, \, \frac{3(r-2)+2}{3(r-1)}\leq t \leq 1
\end{cases}$$

where $j=0,..,r-3$, and $pr_{j}:Y^{r}\to Y,$ $pr_{r-2}:Y^{r}\to Y,$ and $p_{r-1}:Y^{r}\to Y,$ are projections into $j+1$, $r-1$ and $r$ coordinates, respectively. 

Since $H_{i}(\bar{x}, 1)=\bar{y}^{i}=(y_{0}^{i},..,y_{r-1}^{i}),$ $f_{i}$ is continuous. Furthermore, 

$f_{i}(\bar{x})(\frac{j}{r-1})=pr_{j}(H_{i}(\bar{x}, 3(r-1)\frac{j}{r-1}-3j))=pr_{j}(f^{r})(\bar{x})=f(x_{j})$ for all $j=0,...,r-3$,

$f_{i}(\bar{x})(\frac{r-2}{r-1})=pr_{r-2}(H_{i}(\bar{x}, 3(r-1)\frac{r-2}{r-1}-3(r-2)))=p_{r-2}(f^{r})(\bar{x})=f(x_{r-2})$, and 

$f_{i}(\bar{x})(1)=p_{r-1}(H_{i}(\bar{x}, 3(r-1)-3(r-1)1))=p_{r-1}(f^{r})(\bar{x})=f(x_{r-1})$

so that the $f_{i}$ are sequential $f$-motion planners. Therefore, $TC_{r}(f)\leq k.$
\end{proof}

\begin{rmk}
This proof recovers Jamie Scott's proof (\cite{Sc}, Proposition 3.8) when $r=2.$    
\end{rmk}

\begin{thm}\label{H space}
    Let $f:G\to H$ be any map such that $G$ is an topological group. Then $TC_{r}(f)=\cat(f^{r-1}).$
\end{thm}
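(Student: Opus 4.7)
The inequality $\cat(f^{r-1}) \leq TC_r(f)$ is already given by Proposition \ref{TC}, so the plan is to establish the reverse inequality $TC_r(f) \leq \cat(f^{r-1})$ by adapting Farber's classical argument that $TC_r(G) = \cat(G^{r-1})$ for a topological group $G$ to the setting of an arbitrary target $H$.

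I would start by setting $k = \cat(f^{r-1})$ and choosing an open cover $U_0, \ldots, U_k$ of $G^{r-1}$ together with nullhomotopies $L_i \colon U_i \times I \to H^{r-1}$ satisfying $L_i(\bar{y}, 0) = f^{r-1}(\bar{y})$ and $L_i(\bar{y}, 1) = \bar{c}_i \in H^{r-1}$. Using path-connectedness of $H$, I would arrange each $\bar{c}_i$ to lie on the diagonal $\Updelta H$. Next, I would exploit the topological-group structure of $G$ via the homeomorphism
$$\mu \colon G \times G^{r-1} \xrightarrow{\cong} G^r, \qquad \mu(x, (y_1, \ldots, y_{r-1})) = (x, xy_1, \ldots, xy_{r-1}),$$
which carries $G \times \{(e, \ldots, e)\}$ onto the diagonal $\Updelta G \subset G^r$. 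Setting $V_i = \mu(G \times U_i)$---equivalently, $\bar{x} \in V_i$ iff $(x_0^{-1}x_1, \ldots, x_0^{-1}x_{r-1}) \in U_i$---yields an open cover of $G^r$.

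On each $V_i$ I would construct a sequential $f$-motion planner via characterization $(5)$ of Theorem \ref{sequential TC}, taking $h_i \colon V_i \to G$ to be the first-coordinate projection $h_i(\bar{x}) = x_0$. The task reduces to producing a homotopy $f^r \Updelta h_i \simeq f^r|_{V_i}$: after pulling back along $\mu$, this becomes a homotopy on $G \times U_i$ from $(x, \bar{y}) \mapsto (f(x), f(xy_1), \ldots, f(xy_{r-1}))$ to $(x, \bar{y}) \mapsto (f(x), \ldots, f(x))$. The two maps already agree on the slice $G \times \{(e, \ldots, e)\}$, and I would assemble the required interpolation by combining $L_i$ with the diagonal left-translation action of $G$ on $G^{r-1}$.

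The hard part will be exactly this last step: $L_i$ only controls $(f(y_1), \ldots, f(y_{r-1}))$ as $\bar{y}$ varies over $U_i$, while the target expression involves $(f(xy_1), \ldots, f(xy_{r-1}))$ over the full $G \times U_i$. The topological group structure provides the needed bridge through left translation by $x$, which varies continuously in $x \in G$ and is what permits weakening ``categorical in $G^{r-1}$'' to ``nullhomotopic after $f^{r-1}$''---the feature that distinguishes Theorem \ref{H space} from its classical identity-map predecessor. Completing the proof amounts to checking that the translation-twisted assembly of $L_i$ yields a continuous homotopy in $(x, \bar{y}, t)$ deforming $f^r|_{V_i}$ into $\Updelta H$, after which Theorem \ref{sequential TC} delivers the motion planners and gives $TC_r(f) \leq k$.
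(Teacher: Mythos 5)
Your setup coincides with the paper's: the paper also takes nullhomotopies $H_{i}$ of $f^{r-1}|_{U_{i}}$ (normalized to end at $f^{r-1}(\bar{e})$), covers $G^{r}$ by the sets $W_{i}=\{\bar{x}: (x_{0}^{-1}x_{1},\dots,x_{0}^{-1}x_{r-1})\in U_{i}\}$, and then invokes Theorem \ref{sequential TC} to get motion planners. The difference is that the step you explicitly defer --- producing the homotopy from $(f(x_{0}),f(x_{0}y_{1}),\dots,f(x_{0}y_{r-1}))$ to the diagonal over $G\times U_{i}$ --- is the entire content of the theorem, and the mechanism you propose for it does not supply it. Left translation by $x_{0}$ acts on the domain $G^{r-1}$, while the only data you have, the nullhomotopy $L_{i}$, lives in the target $H^{r-1}$: knowing that $f^{r-1}$ is nullhomotopic on $U_{i}$ says nothing about $f^{r-1}$ on the translated slice $x_{0}\cdot U_{i}$, and $H$ carries no multiplication with which to transport the paths $L_{i}(\bar{y},\cdot)$ from $f^{r-1}(\bar{y})$ to paths starting at $f^{r-1}(x_{0}\bar{y})$. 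So ``combining $L_{i}$ with the diagonal left-translation action of $G$'' leaves exactly the gap that needed to be closed.

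The paper closes it by setting $\gamma_{\bar{b}}(t)=f(a)\,H_{i}(a^{-1}\bar{b},t)$, i.e.\ by translating the nullhomotopy \emph{inside $H$} by $f(a)$; this tacitly uses a multiplication on $H$ compatible with $f$ (for instance $f=B\phi$ a homomorphism of abelian groups, as in the application to Theorem \ref{FGAG}, where both classifying spaces are topological groups), not merely that $G$ is a group. Some such compatibility is genuinely unavoidable: for a degree-one map $f:T^{2}\to S^{2}$ one has $\cat(f)=1$, while $u=(f\times f)^{*}(a\times 1-1\times a)\in\ker(\Updelta_{T^{2}}^{*})\cap\im(f\times f)^{*}$ satisfies $u\cup u\neq 0$, so Theorem \ref{cup length} gives $TC_{2}(f)\geq 2$; hence no deformation of $(f\times f)|_{W_{i}}$ into the diagonal can exist for a two-set cover, even though $T^{2}$ is a topological group. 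In short, your reduction via Theorem \ref{sequential TC}(5) and the cover $V_{i}$ is fine and matches the paper, but the ``translation-twisted assembly'' is not a routine continuity check --- it is precisely where hypotheses beyond ``$G$ is a topological group'' (multiplicativity of $f$, or an $H$-structure on $H$ respected by $f$) must enter, and your sketch does not provide them.
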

\begin{proof}
 By Proposition \ref{TC}, it suffices to show that $TC_{r}(f)\leq\cat(f^{r-1}).$    
Let $e\in G$ be the unit. Let us denote $\bar{e}=(e,\cdots,e) \in G^{r-1}$ and $(e,\bar{e})\in G\times G^{r-1}.$  
Suppose that $\cat(f^{r-1})=k$ and let $U_{0},\dots,U_{k}$ be open covers of $G^{r-1}$ together nullhomotopies $H_{i}:U_{i}\times I\to H^{r-1}$ of $f^{r-1}|_{U_{i}}$ to $f^{r-1}(\bar{e}).$ Indeed, this gives a path from $f^{r-1}(\bar{b})$ to $(f(a),\cdots,f(a))=f(a)f^{r-1}(\bar{e}),$  which is continous and depends on only $\bar{b}\in U_{i}$ since  $\gamma_{\bar{b}}(t)=f(a)H_{i}(a^{-1}\bar{b},t)$

Now we define the open cover of $G^{r}=G\times G^{r-1}$ as follows

$W_{i}=\{(a,\bar{g})\in (G\times G^{r-1})| a^{-1}\bar{b}\in U_{i} \}$ for all $i=0,\cdots,k.$ Clearly, $\{W_{i}\}$ are open subsets and covers the topological group $G^{r}.$ 

Note that path between two vectors can be thought as collection of component-wise paths, i.e, $\gamma_{\bar{b}}(t)=(\gamma_{b_{1}}(t),\cdots,\gamma_{b_{r-1}}(t)).$
Abusing notation, we let $\gamma_{\bar{b}}(t)=(1_{f(a)},\gamma_{b_{1}}(t),\cdots,\gamma_{b_{r-1}}(t))$ where $1_{f(a)}$ is constant path at $f(a)$ for all $t\in [0,1].$ 

Thus, we obtain that the map $f^{r}$ restricts on each open set $W_{i}$,  $f^{r}|_{W_{i}}$ deforms into the diagonal $\Updelta(Y)$ of $Y^{r}$,  namely $(f(a),f(a),..,f(a)).$ By Theorem \ref{sequential TC}, we get an sequential $f$-motion planner on $W_{i}$ for each $i=0,\cdots,k$ (See, Theorem \ref{sequential TC}, proof of $(4 \Rightarrow 2)$).

Therefore, $TC_{r}(f)\leq k.$

\end{proof}

\begin{rmk}
    Note that the statement is true for $H$-space. Because of notational difficulties, we prove here only for topological groups, which suffices us to prove our main results. 
\end{rmk}

The cuplength lower bound for topological complexity of a map was proven in \cite{Sc} for map with an ANR X with coefficients in any $\pi_{1}(Y)\times\pi_{1}(Y)$ module. The exact same proof works for the sequential topological complexity of a map. To completeness of our work, we state it here and give the proof:

\begin{thm}\label{cup length}
Let $f:X\to Y$ be any map with X an ANR. Suppose that $u_{i}\in \tilde{H}^{*}(X^{r};A_{i})$ are such that $u_{i}\in \ker(\Updelta_{X}^{*})\cap \im(f^{r})^{*}$ and $u_{0}\cup\cdots\cup u_{k}\neq 0,$ where each $A_{i}$ are $\pi_{1}(Y)^{r}$ modules. Then $TC_{r}(f)\geq k+1.$ Indeed,
$$\cuplen(\ker(\Updelta_{X}^{*})\cap \im(f^{r})^{*})\leq TC_{r}(f).$$
\end{thm}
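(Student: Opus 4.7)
The plan is to argue by contradiction, adapting the classical Schwarz-style cuplength lower bound for sectional category to the setting of maps. Suppose $TC_{r}(f) \leq k$, and let $U_{0}, \ldots, U_{k}$ be an open cover of $X^{r}$ admitting sequential $f$-motion planners. By Theorem \ref{sequential TC}(5), for each $i$ there exists a map $h_{i}: U_{i} \to X$ such that $f^{r}|_{U_{i}}$ is homotopic to $f^{r} \circ \Updelta_{X} \circ h_{i}$. The strict identity $f^{r} \circ \Updelta_{X} = \Updelta_{Y} \circ f$ will drive the cohomology computation.

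For each $i$, use the hypothesis $u_{i} \in \operatorname{im}(f^{r})^{*}$ to pick $v_{i} \in \tilde{H}^{*}(Y^{r}; A_{i})$ with $u_{i} = (f^{r})^{*}(v_{i})$. By naturality of cohomology with local coefficients, applied to the homotopy from the previous step,
\[
u_{i}|_{U_{i}} \;=\; (f^{r}|_{U_{i}})^{*}(v_{i}) \;=\; (\Updelta_{Y}\circ f\circ h_{i})^{*}(v_{i}) \;=\; h_{i}^{*}\, f^{*}\, \Updelta_{Y}^{*}(v_{i}).
\]
On the other hand,
\[
\Updelta_{X}^{*}(u_{i}) \;=\; (f^{r}\circ \Updelta_{X})^{*}(v_{i}) \;=\; (\Updelta_{Y}\circ f)^{*}(v_{i}) \;=\; f^{*}\Updelta_{Y}^{*}(v_{i}),
\]
which vanishes by the hypothesis $u_{i} \in \ker \Updelta_{X}^{*}$. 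Hence $u_{i}|_{U_{i}} = 0$ for every $i$.

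From $u_{i}|_{U_{i}}=0$, the long exact sequence of the pair $(X^{r}, U_{i})$ produces a relative lift $\tilde{u}_{i} \in \tilde{H}^{*}(X^{r}, U_{i}; A_{i})$ mapping to $u_{i}$. Forming the iterated relative cup product gives
\[
\tilde{u}_{0}\cup\cdots\cup\tilde{u}_{k} \in \tilde{H}^{*}\bigl(X^{r},\; U_{0}\cup\cdots\cup U_{k};\; A_{0}\otimes\cdots\otimes A_{k}\bigr).
\]
Since $\bigcup_{i} U_{i} = X^{r}$, the target group is zero, and restricting back through the map $\tilde{H}^{*}(X^{r}, X^{r};-) \to \tilde{H}^{*}(X^{r};-)$ yields $u_{0}\cup\cdots\cup u_{k}=0$, contradicting the assumption. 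This forces $TC_{r}(f) \geq k+1$, and taking the supremum gives the cuplength bound.

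The main technical point I anticipate needing care is the existence of the relative lifts $\tilde{u}_{i}$ and the well-definedness of the relative cup product with the twisted coefficient systems; both hinge on $X^{r}$ being an ANR, so that the cover $\{U_{i}\}$ admits partitions of unity and the relative cohomology of the open pairs $(X^{r}, U_{i})$ behaves correctly. Coefficient compatibility is automatic: each $A_{i}$ is a $\pi_{1}(Y)^{r}$-module, which pulls back via $f^{r}_{*}:\pi_{1}(X^{r}) \to \pi_{1}(Y)^{r}$ to a $\pi_{1}(X^{r})$-module, and the tensor products $A_{0}\otimes\cdots\otimes A_{k}$ inherit a consistent local-coefficient structure on $X^{r}$.
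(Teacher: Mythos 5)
Your proposal is correct and follows essentially the same argument as the paper: pull each $u_{i}$ back from $v_{i}\in H^{*}(Y^{r};A_{i})$, use the homotopy $f^{r}|_{U_{i}}\simeq f^{r}\Updelta_{X} h_{i}$ from Theorem \ref{sequential TC}(5) together with $u_{i}\in\ker\Updelta_{X}^{*}$ to kill the restriction to $U_{i}$, lift to relative classes, and conclude via the vanishing of $H^{*}(X^{r},X^{r};A_{0}\otimes\cdots\otimes A_{k})$. Routing the computation through the identity $f^{r}\Updelta_{X}=\Updelta_{Y}f$ is only a cosmetic difference from the paper's use of $h_{i}^{*}\Updelta_{X}^{*}(f^{r})^{*}$.
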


\begin{proof}
  Suppose $TC_{r}(f)\leq k$ so that there are open sets $U_{0},\cdots,U_{k}$ covering $X^{r}$ such that on each $U_{i}$ there is a map $h:U_{i}\to X$ satisfying $(f^{r})\Updelta h\cong (f^{r})|_{U_{i}}$ (see Thoerem \ref{sequential TC} part (5)). Thus, for each $i$ we have a commutative diagram 
$$
  \begin{tikzcd}
      \cdots \arrow[r] & H^{n}(X^{r},U_{i};A_{i}) \arrow[r,"q_{i}^{*}"] & H^{n}(X^{r};A_{i}) \arrow[r,"j_{i}^{*}"] & H^{n}(U_{i};A_{i}) \arrow[r] & \cdots \\
& & & H^{n}(X;A_{i}) \arrow[u, "h_{i}^{*}"] & \\
& & H^{n}(Y^{r};A_{i}) \arrow[uu,"(f^{r})^{*}"] \arrow[r, "(f^{r})^{*}"] & H^{n}(X^{r};A_{i}) \arrow[u, "\Updelta_{X}^{*}"] & \\       \end{tikzcd}
$$

where the top row is the long exact sequence of the pair $(X^{r},U_{i}),$ and the maps $q_{i}$ and $j_{i}$ are inclusion maps.

Now let $u_{0},..,u_{k}\in \ker(\Updelta_{X}^{*})\cap \im(f^{r})^{*}$ where each $u_{i}$ is taken in coefficients $A_{i}$ and has degree at least 1. Then there is some $v_{i}\in H^{n}(Y^{r};A_{i})$ such that $(f^{r})^{*}v_{i}=u_{i}.$  Since each $u_{i}\in \ker\Updelta_{X}^{*},$ it follows that 

$$j_{i}^{*}u_{i}=j_{i}^{*}(f^{r})^{*}v_{i}=h_{i}^{*}\Updelta_{X}^{*}(f^{r})^{*}v_{i}=h_{i}^{*}\Updelta_{X}^{*}u_{i}=h_{i}^{*}(0)=0$$

so that $u_{i}\in \ker j_{i}^{*}.$ By exactness, there is some $w_{i}\in H^{*}(X^{r},U_{i};A_{i})$ such that $q_{i}^{*}w_{i}=u_{i}.$ Now consider $w_{0}\cup\cdots\cup w_{k},$ which lies in $$H^{*}(X^{r},U_{0}\cup\cdots\cup U_{k};A_{0}\otimes\cdots\otimes A_{k})=H^{*}(X^{r},X^{r};A_{0})\otimes\cdots\otimes A_{k})=0$$

so that $w_{0}\cup\cdots\cup w_{k}=0;$ hence, $$u_{0}\cup\cdots\cup u_{k}=q_{0}^{*}w_{0}\cup\cdots\cup q_{k}^{*}w_{k}=q^{*}(w_{0}\cup\cdots\cup w_{k})=0$$ 
where $q:X^{r}\to (X^{r},X^{r})$ is the inclusion map of pairs. Thus, the theorem follows.
\end{proof}

Similarly, the proof of topological complexity of composite maps can be identically applied for the sequential topological complexity of composite maps.
\begin{prop}\label{Composition}
   $TC_{r}(gf)\leq \min \{TC_{r}(g), TC_{r}(f) \}$ for any maps $f:X\to Y$ and $g:Y\to Z.$ 
\end{prop}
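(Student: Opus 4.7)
The plan is to prove the two inequalities $TC_{r}(gf)\leq TC_{r}(f)$ and $TC_{r}(gf)\leq TC_{r}(g)$ separately, each by an explicit construction of a sequential $gf$-motion planning algorithm from an already existing one. Both constructions remain entirely at the level of partial sections of path-fibrations, so no cohomological or obstruction-theoretic input is required; the whole argument is a direct application of Definition~3.1 together with Theorem~\ref{sequential TC}.

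First I would handle $TC_{r}(gf)\leq TC_{r}(f)$ by post-composition with $g$. Suppose $TC_{r}(f)=k$ and let $U_{0},\dots,U_{k}$ be an open cover of $X^{r}$ equipped with sequential $f$-motion planners $f_{i}:U_{i}\to PY$. Define $(gf)_{i}:U_{i}\to PZ$ by $(gf)_{i}(\bar{x})=g\circ f_{i}(\bar{x})$; this is continuous since $g$ and $f_{i}$ are. For every $\bar{x}=(x_{0},\dots,x_{r-1})\in U_{i}$ and every $j=0,\dots,r-1$ one has
$$(gf)_{i}(\bar{x})\bigl(\tfrac{j}{r-1}\bigr)=g\bigl(f_{i}(\bar{x})(\tfrac{j}{r-1})\bigr)=g(f(x_{j}))=(gf)(x_{j}),$$
so each $(gf)_{i}$ is a sequential $gf$-motion planner. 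Hence $TC_{r}(gf)\leq k$.

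Next I would handle $TC_{r}(gf)\leq TC_{r}(g)$ by pulling back along $f^{r}$. Suppose $TC_{r}(g)=m$ and let $V_{0},\dots,V_{m}$ be an open cover of $Y^{r}$ with sequential $g$-motion planners $g_{i}:V_{i}\to PZ$. Put $W_{i}=(f^{r})^{-1}(V_{i})$, which is open in $X^{r}$ by continuity of $f^{r}$ and covers $X^{r}$ since the $V_{i}$ cover $Y^{r}$. Define $(gf)_{i}:W_{i}\to PZ$ by $(gf)_{i}(\bar{x})=g_{i}(f^{r}(\bar{x}))$. Then for $\bar{x}\in W_{i}$ and $j=0,\dots,r-1$,
$$(gf)_{i}(\bar{x})\bigl(\tfrac{j}{r-1}\bigr)=g_{i}(f^{r}(\bar{x}))\bigl(\tfrac{j}{r-1}\bigr)=g(f(x_{j}))=(gf)(x_{j}),$$
where the middle equality uses the defining property of the $g$-motion planner $g_{i}$ applied to $f^{r}(\bar{x})=(f(x_{0}),\dots,f(x_{r-1}))$. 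Hence $(gf)_{i}$ is a sequential $gf$-motion planner and $TC_{r}(gf)\leq m$.

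Since both bounds hold, $TC_{r}(gf)\leq\min\{TC_{r}(f),TC_{r}(g)\}$. There is no genuine obstacle here; the only point one must be careful about is that in the second step the cover $\{W_{i}\}$ is obtained as the preimage of $\{V_{i}\}$ under $f^{r}$, which is automatically open, so continuity of $f$ is all that is needed. This proof is the exact sequential analogue of the $r=2$ argument of Scott, and reduces to it when $r=2$.
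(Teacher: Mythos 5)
Your proof is correct and follows essentially the same route as the paper: for $TC_{r}(gf)\leq TC_{r}(f)$ you push the data forward by $g$, and for $TC_{r}(gf)\leq TC_{r}(g)$ you pull back the cover along $f^{r}$. The only cosmetic difference is that you work directly with sequential motion planners (paths composed with $g$, planners precomposed with $f^{r}$), whereas the paper phrases the same construction through the equivalent characterization of Theorem~\ref{sequential TC} as deformations of $(f^{r})|_{U_i}$, resp.\ $(g^{r})|_{V_i}$, into the diagonal, composed with $g^{r}$ or precomposed with $f^{r}$.
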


\begin{proof}
Suppose that $TC_{r}(f)=k$ so that there is an open cover $U_{0},\cdots,U_{k}$ of $X^{r}$ such that each restriction $(f^{r})|_{U_{i}}$ can be deformed into $\Updelta(Y)$ via some homotopy $F_{i}:U_{i}\times I\to Y^{r}.$ Now define a homotopy $H_{i}:U_{i}\times I\to Z^{r}$ by $H_{i}=(g^{r})F_{i}$, which defines a deformation of $(gf)^{r}|_{U_{i}}$ into $\Updelta(Z);$ hence, $TC_{r}(gf)\leq k=TC_{r}(f).$ 

Now suppose that $TC_{r}(g)=k$ so that there is an open cover $V_{0},\cdots,V_{k}$ of $Y^{r}$ such that each restriction $(g^{r})|_{V_{i}}$ can be deformed into $\Updelta(Z)$ via some homotopy $G_{i}:V_{i}\times I\to Z^{r.}$  Now let $U_{i}=(f^{r})^{-1}(V_{i})$ and define a homotopy $H_{i}:U_{i}\times I\to Z^{r}$ by $H_{i}(\bar{x},t)=G_{i}(f^{r}(\bar{x}),t),$ which defines a deformation of $(gf)^{r}|_{U_{i}}$ into $\Updelta(Z).$ Therefore, $TC_{r}(gf)\leq k=TC_{r}(g).$ 

\end{proof}

\begin{cor}\label{retraction}
 Let $f:X\to Y$ be a map, and let $r:X\to A$ be a retraction. 
Then $TC_{r}(f)=TC_{r}(\bar{f})$ where $\bar{f}:=f|_{A}$.
\end{cor}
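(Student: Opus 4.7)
The plan is to deduce the equality from two inequalities, each obtained by a single application of Proposition \ref{Composition} combined with the homotopy invariance of $TC_{r}$ for maps that was noted just after Theorem \ref{sequential TC}. No new constructions of motion planners are needed; everything factors through the composition formula.

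First I would handle the easy direction $TC_{r}(\bar{f})\leq TC_{r}(f)$. Let $\iota:A\hookrightarrow X$ denote the inclusion, so that by definition $\bar{f}=f\circ\iota$. Proposition \ref{Composition} then immediately yields
\[
TC_{r}(\bar{f}) \;=\; TC_{r}(f\circ\iota) \;\leq\; TC_{r}(f).
\]
This step uses only that $A$ is a subspace of $X$; the retraction itself is not yet invoked.

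Next I would establish the reverse inequality $TC_{r}(f)\leq TC_{r}(\bar{f})$, where the retraction plays its essential role. Taking $r$ to be a (deformation) retraction gives $\iota\circ r\simeq \mathrm{id}_{X}$, whence
\[
f \;=\; f\circ\mathrm{id}_{X} \;\simeq\; f\circ\iota\circ r \;=\; \bar{f}\circ r
\]
as maps $X\to Y$. Homotopy invariance of the sequential topological complexity of a map (which holds because $TC_{r}(f)=\secat((f^{r})^{*}\Updelta_{0}^{Y})$ and $\secat$ is homotopy invariant) then yields $TC_{r}(f)=TC_{r}(\bar{f}\circ r)$, and a second application of Proposition \ref{Composition} to the composite $\bar{f}\circ r$ gives $TC_{r}(\bar{f}\circ r)\leq TC_{r}(\bar{f})$. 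Chaining these produces $TC_{r}(f)\leq TC_{r}(\bar{f})$, and combining with the previous step gives the claimed equality.

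The principal point to verify is the homotopy $f\simeq \bar{f}\circ r$, which is precisely what requires $r$ to provide a deformation of $\mathrm{id}_{X}$ onto $A$ rather than merely a set-theoretic retraction; this is the main (and essentially only) subtlety in the argument. Once that hypothesis on $r$ is recognised and used, the remainder of the proof is a formal manipulation of Proposition \ref{Composition} together with homotopy invariance of $TC_{r}$, and no further obstacles arise.
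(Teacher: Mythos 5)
Your argument is the same as the paper's: both directions come from Proposition \ref{Composition}, with $TC_{r}(\bar f)=TC_{r}(f\circ\iota)\leq TC_{r}(f)$ for free, and the reverse inequality obtained by writing $f$ (up to homotopy) as $\bar f\circ r$ and invoking homotopy invariance of $\secat$. The one substantive difference is that you are right to flag the homotopy $f\simeq \bar f\circ r$ as the crux: the corollary as literally stated, with $r$ only a set-theoretic retraction, is false (take $A$ a point and $f=\mathrm{id}_{S^{2}}$: then $TC_{r}(\bar f)=0$ while $TC_{r}(f)=TC_{r}(S^{2})>0$), and the paper's two-line proof silently assumes this factorization. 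Note, however, that your chosen repair --- requiring $r$ to be a \emph{deformation} retraction --- is not the hypothesis the paper actually needs: in its application in Theorem \ref{FGAG} the retraction $B\Gamma\to B\Z^{n}$ of $B\Gamma\simeq T^{n}\times BT(\Gamma)$ is not a deformation retraction, but the map $f=B\phi$ does satisfy $f\simeq \bar f\circ r$ because $\phi$ kills $T(\Gamma)$ and hence factors through $\Z^{n}$. Since your proof only ever uses the homotopy $f\simeq \bar f\circ r$ (and not contractibility of the fibers of $r$), the cleaner statement is to take that factorization itself as the hypothesis; with that adjustment your argument is complete and coincides with the intended proof.
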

\begin{proof}
 By Proposition \ref{Composition}, we obtain that $TC_{r}(f)\leq TC_{r}(\bar{f}).$   

 Since $\bar{f}=f|_{A}$, we have $TC_{r}(\bar{f})\leq TC_{r}(f).$
\end{proof}

\section{Cohomological dimension of group homomorphisms}

Let $\Gamma$ be a discrete group. A projective resolution $P_{*}(\Gamma)$ of $\Z$ for the group $\Gamma$ is an exact sequence of projective $\Gamma-$ modules
$$\cdots\to P_{n}(\Gamma)\to P_{n-1}(\Gamma)\to\cdots P_{2}(\Gamma)\to P_{1}\to \Z\Gamma\to \Z.$$

The cohomology of $\Gamma$ with coefficients in a $\Gamma-$ module M can be defined as homology of the cochain complexe $Hom_{\Gamma}(P_{*}(\Gamma),M)$.

We recall the {\em cohomological dimension} $\cd(\phi)$ of a group homomorphism $\phi:\Gamma\to\Lambda$ was introduced by Mark Grant ~\cite{Gr} as the maximum of $k$ such that
there is a $\Z\Lambda$-module $M$ with  the nonzero  induced  homomorphism $\phi^*:H^k(\Lambda,M)\to H^k(\Gamma,M)$. When $\phi$ is the identity homomorphism, we recover the classical cohomological dimension of a discrete group $\Gamma$, the shortest length of projective resolution $P_{*}(\Gamma)$ \cite{Br}. The following Theorem of Dranishnikov and De Saha gives the chacterisation of group homomorphism in term of the projective resolutions. 

\begin{thm}[\cite{DD}] Let $\phi:\Gamma\to\Lambda$ be a group homomorphism with $\cd(\phi)=n$ and let $\phi_{*}:(P_{*}(\Gamma),\partial_{*})\to (P_{*}(\Gamma),\partial'_{*})$ be the chain map between projective resolutions of $\Z$ for $\Gamma$ and $\Lambda$ induced by $\phi$. Then $\phi_{*}$ is chain homotopic to $\psi_{*}:P_{*}(\Gamma)\to P_{*}(\Lambda)$ with $\psi_{k}=0$ for $k>n.$
\end{thm}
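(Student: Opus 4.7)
The plan is to construct a chain homotopy $h=\{h_k:P_k(\Gamma)\to P_{k+1}(\Lambda)\}$ with $h_k=0$ for $k<n$, and set $\psi_*:=\phi_*-(\partial' h+h\partial)$; then automatically $\psi_k=\phi_k$ for $k<n$, $\psi_n=\phi_n-\partial'_{n+1}h_n$, and the requirement $\psi_k=0$ for $k\geq n+1$ becomes the system
$$\phi_k=\partial'_{k+1}h_k+h_{k-1}\partial_k,\qquad k\geq n+1,$$
together with the single chain-map condition $\psi_n\partial_{n+1}=0$, which reads
$$\phi_n|_{Z_n(\Gamma)}\;=\;(\partial'_{n+1}h_n)|_{Z_n(\Gamma)}\qquad\text{as maps }Z_n(\Gamma)\to Z_n(\Lambda),$$
where $Z_k:=\ker\partial_k$ (in either resolution). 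Thus the whole problem reduces to producing such an $h_n$, after which the remaining $h_k$ for $k\geq n+1$ are built by induction: a direct computation using $\partial\partial=0$ (and, for $k=n+1$, the displayed condition on $h_n$) shows that $\phi_k-h_{k-1}\partial_k$ takes values in $\ker\partial'_k=\partial'_{k+1}(P_{k+1}(\Lambda))$, and projectivity of $P_k(\Gamma)$ supplies the lift $h_k$. This inductive stage is purely formal and requires no cohomological input.

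The task therefore reduces to finding a $\Z\Gamma$-module map $\widetilde h:P_n(\Gamma)\to Z_n(\Lambda)$ that extends $\phi_n|_{Z_n(\Gamma)}:Z_n(\Gamma)\to Z_n(\Lambda)$; once $\widetilde h$ is in hand, projectivity of $P_n(\Gamma)$ lifts it through the surjection $\partial'_{n+1}:P_{n+1}(\Lambda)\twoheadrightarrow Z_n(\Lambda)$ to the desired $h_n$. This extension problem is the main obstacle, and it is the only place where the hypothesis $\cd(\phi)\leq n$ is used.

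To resolve it, iterate the short exact sequences $0\to Z_k\to P_k\to Z_{k-1}\to 0$ together with the associated $\operatorname{Ext}$ long exact sequences; this yields the standard dimension-shifting identification
$$H^{n+1}(\Gamma,M)\;\cong\;\operatorname{Hom}_{\Z\Gamma}(Z_n(\Gamma),M)\big/\operatorname{im}\!\bigl(\operatorname{Hom}_{\Z\Gamma}(P_n(\Gamma),M)\to\operatorname{Hom}_{\Z\Gamma}(Z_n(\Gamma),M)\bigr),$$
and likewise for $\Lambda$. Specializing to the $\Z\Lambda$-module $M=Z_n(\Lambda)$, the tautological class $[\operatorname{id}_{Z_n(\Lambda)}]\in H^{n+1}(\Lambda,Z_n(\Lambda))$ is carried by $\phi^*$ precisely to $[\phi_n|_{Z_n(\Gamma)}]\in H^{n+1}(\Gamma,Z_n(\Lambda))$. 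The assumption $\cd(\phi)\leq n$ kills $\phi^*$ in degree $n+1$, so this class vanishes; under the identification above this says exactly that $\phi_n|_{Z_n(\Gamma)}$ extends across $Z_n(\Gamma)\hookrightarrow P_n(\Gamma)$, producing the required $\widetilde h$. The hardest conceptual step is thus the translation between the vanishing of $\phi^*$ on $H^{n+1}$ and the concrete extension property of $\phi_n|_{Z_n(\Gamma)}$; everything else is routine projective lifting.
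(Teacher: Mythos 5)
The paper does not actually prove this statement: it is quoted from \cite{DD}, so there is no in-paper argument to compare yours against line by line. Judged on its own, your proof is correct, and it is the natural obstruction-theoretic argument one would expect (very likely the same in spirit as the source). The reduction is sound: with $h_k=0$ for $k<n$, the map $\psi=\phi-(\partial'h+h\partial)$ is a chain map for \emph{any} choice of $h$, and the equations $\phi_k=\partial'_{k+1}h_k+h_{k-1}\partial_k$ for $k\geq n+1$ are solvable inductively by projectivity of $P_k(\Gamma)$ together with exactness of $P_*(\Lambda)$, once $h_n$ satisfies $(\partial'_{n+1}h_n)|_{Z_n(\Gamma)}=\phi_n|_{Z_n(\Gamma)}$. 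The key step is also right: the obstruction to extending $\phi_n|_{Z_n(\Gamma)}$ over $P_n(\Gamma)$ is its class in the cokernel of the restriction map $\operatorname{Hom}_{\Z\Gamma}(P_n(\Gamma),Z_n(\Lambda))\to\operatorname{Hom}_{\Z\Gamma}(Z_n(\Gamma),Z_n(\Lambda))$, which your dimension-shifting identification equates with $H^{n+1}(\Gamma,Z_n(\Lambda))$, and this class is exactly $\phi^*$ of the class of the cocycle $\partial'_{n+1}\in\operatorname{Hom}_{\Z\Lambda}(P_{n+1}(\Lambda),Z_n(\Lambda))$ (your tautological class); the verification is the one-line chain identity $\partial'_{n+1}\phi_{n+1}=\phi_n\partial_{n+1}$, which you should state explicitly since it is the only place naturality enters. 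Since $\cd(\phi)=n$ makes $\phi^*$ vanish in degree $n+1$ for every $\Z\Lambda$-module, in particular for $Z_n(\Lambda)$, the obstruction dies, $\widetilde h$ exists, and $\Z\Gamma$-projectivity of $P_n(\Gamma)$ (the surjection $\partial'_{n+1}\colon P_{n+1}(\Lambda)\to Z_n(\Lambda)$ is $\Z\Gamma$-linear by restriction of scalars along $\phi$) lifts it to $h_n$. Two cosmetic remarks: your ``single chain-map condition'' $\psi_n\partial_{n+1}=0$ is automatic once $\psi_{n+1}=0$, so it is cleaner to present it as the compatibility needed to start the induction at $k=n+1$; and the boundary case $n=0$ (where the syzygy below $Z_0$ is $\Z$ itself) and the vacuous case $\cd(\phi)=\infty$ cause no difficulty.
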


By Theorem 3.1 in \cite{DK}, we assume that group homomorphism $\phi:\Gamma\to \Lambda$ is an epimorphism. We get the following characterisation of group epimorphism.  

\begin{prop}\label{char. of group hom}
Let $\phi:\Gamma\to \Lambda$ be an nonzero epimorphism of finitely generated groups $\Gamma, \Lambda$. Then $\cd(\phi)=0$ if and only if $\Gamma$ is the trivial group.    
\end{prop}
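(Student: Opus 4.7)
Under the stated hypotheses the source $\Gamma$ cannot be trivial, since then $\phi$ would be the zero homomorphism; hence the ``if'' direction is vacuous. The content of the proposition is therefore the ``only if'' direction, which I recast as: for every nonzero epimorphism $\phi:\Gamma\to\Lambda$ of finitely generated groups one has $\cd(\phi)\ge 1$. The strategy is to exhibit a single nonzero class in the image of $\phi^{*}:H^{1}(\Lambda;M)\to H^{1}(\Gamma;M)$ for a well-chosen $\Z\Lambda$-module $M$, namely the pullback of the Berstein--Schwarz class of $\Lambda$ from Section~2.3.

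Take $M=I(\Lambda)$. Under the standard identification $H^{1}(\Lambda;I(\Lambda))\cong \operatorname{Ext}^{1}_{\Z\Lambda}(\Z,I(\Lambda))$, the Berstein--Schwarz class $\beta_{\Lambda}$ is represented by the augmentation extension $0\to I(\Lambda)\to \Z\Lambda\to \Z\to 0$. Restriction of scalars along $\phi$ turns this into a short exact sequence of $\Z\Gamma$-modules whose class in $\operatorname{Ext}^{1}_{\Z\Gamma}(\Z,I(\Lambda))=H^{1}(\Gamma;I(\Lambda))$ is by naturality precisely $\phi^{*}(\beta_{\Lambda})$. Since $\phi$ is surjective, the $\Gamma$-action on $\Lambda$ through $\phi$ is transitive with stabilizer $\ker\phi$, so as $\Z\Gamma$-modules $\Z\Lambda\cong \Z[\Gamma/\ker\phi]$, the permutation module on the cosets.

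It remains to show that this pulled-back extension does not split over $\Z\Gamma$. A $\Z\Gamma$-splitting is the same datum as a $\Gamma$-invariant element $v\in \Z[\Gamma/\ker\phi]$ of augmentation $1$; because the $\Gamma$-action on the cosets is transitive, every coefficient of such a $v$ must equal a common integer $c$. Since elements of $\Z[\Gamma/\ker\phi]$ are \emph{finite} formal sums, this forces $v=0$ whenever $[\Gamma:\ker\phi]=|\Lambda|$ is infinite, and when $|\Lambda|=n<\infty$ the augmentation equals $cn$, which equals $1$ only for $n=1$. The hypothesis that $\phi$ is nonzero rules out $n=1$, so in every case the extension is non-split, $\phi^{*}(\beta_{\Lambda})\ne 0$ in $H^{1}(\Gamma;I(\Lambda))$, and therefore $\cd(\phi)\ge 1$. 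The main subtlety I anticipate is the uniform treatment of the finite and infinite cases for $|\Lambda|$ in the splitting analysis; specializing the argument to $\phi=\mathrm{id}$ recovers the familiar fact that $\cd(\Gamma)\ge 1$ for any nontrivial $\Gamma$.
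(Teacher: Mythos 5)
Your proof is correct, and it takes a genuinely different route from the paper's. The paper argues at the chain level and in two steps: for finite $\Gamma$ it quotes \cite{Ku1} (a nonzero homomorphism of a group with torsion has infinite $\cd$), and otherwise it represents $\beta_{\Lambda}$ by the cocycle $\partial'_{1}\colon P_{1}(\Lambda)\to I(\Lambda)$ on projective resolutions and deduces $\phi^{*}(\beta_{\Lambda})\neq 0$ from the fact that the composite cocycle $f_{\Lambda}\phi_{1}$ is nonzero. You instead use the extension-theoretic description of the Berstein--Schwarz class (the same description the paper itself invokes via $\delta(1)=\beta_{\Lambda}$ in Corollary \ref{free group}): $\phi^{*}(\beta_{\Lambda})$ is the class in $\operatorname{Ext}^{1}_{\Z\Gamma}(\Z,I(\Lambda))$ of the augmentation extension $0\to I(\Lambda)\to\Z\Lambda\to\Z\to 0$ restricted along $\Z\Gamma\to\Z\Lambda$, and surjectivity of $\phi$ makes $\Z\Lambda\cong\Z[\Gamma/\ker\phi]$ a transitive permutation module, so a $\Gamma$-invariant element of augmentation $1$ can exist only when $|\Lambda|=1$, which the nonzero hypothesis excludes. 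This buys two things: it treats torsion and torsion-free, finite and infinite cases uniformly, with no appeal to \cite{Ku1}; and it shows directly that the cohomology \emph{class} is nonzero, whereas the paper passes from ``the cocycle $f_{\Lambda}\phi_{1}$ is nonzero'' to ``$\phi^{*}(\beta_{\Lambda})\neq 0$'' without ruling out that the cocycle bounds --- your non-splitting computation is exactly what closes that step. Two small remarks: the identification of $\phi^{*}$ on $\operatorname{Ext}^{1}$ with restriction of scalars deserves the one-line justification via naturality of the connecting homomorphism applied to the augmentation sequence over $\Z\Lambda$ and over $\Z\Gamma$; and your observation that the ``if'' direction is vacuous under the nonzero hypothesis is accurate (the paper records it anyway as the trivial remark that $\phi^{*}$ vanishes on reduced cohomology when $B\Gamma$ is a point).
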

\begin{proof}
Suppose $\Gamma$ is the trivial group. Then $\phi^{*}:\tilde{H^{*}}(B\Lambda;M)\to \tilde{H^{*}}(B\Gamma;M)$ is the trivial homomorphism for all $\Lambda-$ modules M. Thus, we obtain that $\cd(\phi)=0.$

We prove this direction in two steps:

{\em Step 1.} Suppose $\Gamma$ is a finite group. Since $\phi(\Gamma)\neq 0,$ by Theorem \cite{Ku1} we obtain that $\cd(\phi)=\infty.$ Thus, it is a contradiction to have $\cd(\phi)=0.$ Hence, $\Gamma$ is the trivial group.

{\em Step 2.} Suppose $\Gamma$ is a torsion-free group. We obtain the following commutative diagram 
$$
\begin{tikzcd}
\cdots \arrow[r,"\partial_{4}"] & P_{3}(\Gamma) \arrow[r,"\partial_{3}"] \arrow[d,"\phi_{3}"] \arrow[r,"\partial_{3}"] & P_{2}(\Gamma) \arrow[r,"\partial_{2}"] \arrow[d,"\phi_{2}"] & P_{1}(\Gamma) \arrow[d,"\phi_{1}"] \arrow[r,"\partial_{1}"] & \Z\Gamma \arrow[r,"\epsilon"]\arrow[d, "\phi_{0}"] & \Z \arrow[d,"1"] \arrow[r] & 0\\
\cdots \arrow[r,"\partial'_{4}"] & P_{3}(\Lambda) \arrow[r,"\partial'_{3}"] & P_{2}(\Lambda)\arrow[r,"\partial'_{2}"] & P_{1}(\Lambda) \arrow[r,"\partial'_{1}"] & \Z\Lambda \arrow[r,"\epsilon'"] & \Z \arrow[r] & 0 
\end{tikzcd}
$$
where the augmentation homomorphisms $\epsilon(g)=1$ and $\epsilon'(g')$ for all $g\in\Gamma$ and all $g'\in\Lambda.$

Note that $P_{*}(\Gamma)$, $\Z\Gamma,$ and $I(\Gamma)$ are $\Gamma-$ modules, but also $\Lambda-$ modules via $\phi.$

The Berstein–Schwarz class of $\Lambda$, $\beta_{\Lambda}\in H^{1}(B\Lambda;I(\Lambda))$,  is given by the cocycle $f_{\Lambda}:P_{1}(\Lambda)\to I(\Lambda),f_{\Lambda}(c')=\partial'_{1}(c')$ where $I(\Lambda)$ is the augmentation ideal of $\Z\Lambda$.

Since $\phi_{*}$ is nonzero surjective, for each $c'\in P_{1}(\Lambda)$, there exists $c\in P_{1}(\Gamma)$ with $c'=\phi(c).$ Since the above diagram is commutative, we get the nonzero cocycle $f_{\Lambda}\phi_{1}\in Hom_{\Lambda}(P_{1}(\Gamma),I(\Lambda))$. Hence, this implies $\phi^{*}:H^{1}(B\Lambda; I(\Lambda))\to H^{1}(B\Gamma;I(\Lambda))$
is not trivial, i.e $\phi^{*}(\beta_{\Lambda})\neq 0.$ In other words, the cohomological class $\phi^{*}(\beta{\Lambda})$ is given by a nonzero cocycle $f_{\Lambda}\phi_{1}.$ This contradicts $\cd(\phi)=0.$ Hence, $\Gamma$ is the trivial group. 
\end{proof}

\begin{rmk}
 {\em Step 2} covers {\em Step 1} since the proof uses only nonzero epimorphism. However, for finite groups one does not need to introduce the Berstein-Schwarz class.  
\end{rmk}

\begin{cor}\label{free group}
 Let $\phi:F_{n}\to F_{m}$ be an epimorphism between free groups with $n$ and $m$ generators respectively. Then $\cat(\phi)=\cd(\phi)=1$. Moreover, the Berstein–Schwarz classes of $F_{n}$ and $F_{m}$ are related.    
\end{cor}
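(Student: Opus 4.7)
The plan is to establish $\cat(\phi)=\cd(\phi)=1$ by sandwiching both invariants between $1$ and $1$, then read off the relation between Berstein--Schwarz classes directly from the chain map argument already used in the proof of Proposition \ref{char. of group hom}.

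For the upper bounds I would use that $BF_n\simeq \bigvee_n S^1$ and $BF_m\simeq \bigvee_m S^1$ are one-dimensional CW complexes, so $\cat(BF_n)=\cat(BF_m)=1$ and $\cd(F_n)=\cd(F_m)=1$. The LS-category of a map is bounded above by the LS-category of either its source or its target (pull back a categorical cover of $BF_m$ along $B\phi$, or push a categorical nullhomotopy on $BF_n$ through $B\phi$), so $\cat(\phi)\le 1$. Similarly, $\phi^{*}\colon H^{k}(F_{m};M)\to H^{k}(F_{n};M)$ vanishes for $k>1$ because both cohomology groups are zero, hence $\cd(\phi)\le 1$.

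For the lower bounds, since $\phi$ is a nonzero epimorphism and $F_{m}$ is nontrivial for $m\geq 1$, the source $F_{n}$ is nontrivial, and Proposition \ref{char. of group hom} rules out $\cd(\phi)=0$, so $\cd(\phi)\geq 1$. For $\cat(\phi)$, the map $B\phi$ induces a nonzero homomorphism on $\pi_{1}$ and is therefore not nullhomotopic, which forces $\cat(\phi)\geq 1$.

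For the relation between the two Berstein--Schwarz classes, I would reuse the commutative diagram of projective resolutions from the proof of Proposition \ref{char. of group hom}. Writing $\bar\phi_{0}\colon I(F_{n})\to I(F_{m})$ for the restriction of $\phi_{0}\colon \Z F_{n}\to \Z F_{m}$ to augmentation ideals, the identity $\partial'_{1}\phi_{1}=\phi_{0}\partial_{1}$ says precisely that the cocycle $f_{F_{m}}\circ \phi_{1}$ representing $\phi^{*}(\beta_{F_{m}})$ coincides with $\bar\phi_{0}\circ f_{F_{n}}$, yielding
$$\phi^{*}(\beta_{F_{m}}) \;=\; (\bar\phi_{0})_{*}(\beta_{F_{n}}) \;\in\; H^{1}(F_{n};I(F_{m})),$$
where $I(F_{m})$ is regarded as an $F_{n}$-module via $\phi$. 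The main subtlety I foresee is bookkeeping with coefficients: one must view $I(F_{m})$ as an $F_{n}$-module via $\phi$ so that both sides live in the same cohomology group, and check that $\bar\phi_{0}$ is $F_{n}$-equivariant, which is immediate from functoriality of the augmentation ideal under group homomorphisms.
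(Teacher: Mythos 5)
Your argument for $\cat(\phi)=\cd(\phi)=1$ is correct and essentially the paper's: the lower bound $\cd(\phi)\geq 1$ comes from Proposition \ref{char. of group hom} in both cases, and the upper bound from $\cat(F_n)=\cd(F_n)=1$ (the paper routes the lower bound for $\cat$ through $\cd(\phi)\leq\cat(\phi)$ rather than non-nullhomotopy, an immaterial difference). For the ``moreover'' clause, however, you take a genuinely different route. You prove the naturality relation $\phi^{*}(\beta_{F_m})=(\bar\phi_0)_{*}(\beta_{F_n})$ in $H^{1}(F_n;I(F_m))$ at the cocycle level; this is correct (the chain map $\phi_*$ is $\phi$-equivariant and $\partial'_1\phi_1=\phi_0\partial_1$ restricts to augmentation ideals), but it holds for an arbitrary homomorphism and uses neither surjectivity nor freeness of $F_m$. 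The paper instead exploits that $\phi$ is an epimorphism onto a free group to choose a section $s:F_m\to F_n$ with $\phi s=1_{F_m}$, and, via the connecting-homomorphism description $\delta(1)=\beta_\Gamma$ and a diagram of short exact sequences, obtains the splitting relations $\beta_{F_n}=I(s)_{*}\circ\phi^{*}(\beta_{F_m})$ and $\beta_{F_m}=s^{*}\circ I(\phi)_{*}(\beta_{F_n})$. The trade-off: your relation is more general but weaker, and in particular does not by itself yield $\phi^{*}(\beta_{F_m})\neq 0$ (the coefficient map $(\bar\phi_0)_{*}$ could a priori kill the class), whereas the paper's section-based relation immediately gives $\phi^{*}(\beta_{F_m})\neq 0$, and it is exactly this split-injectivity mechanism that is reused later (in Theorem \ref{Free}) to show the pullbacks of the canonical classes and their cup powers are nonzero. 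So your proof satisfies the (vaguely stated) corollary, but if the relation is meant to feed the later nonvanishing arguments, you should add the section $s$ and record the stronger identities.
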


\begin{proof}
 By Proposition \ref{char. of group hom}, it is clear that $\cd(\phi)\geq 1$ since $F_{n}$ is not trivial.
Since $\cd(\phi)\leq \cat(\phi)\leq \cat(F_{n})=1,$ we obtain that $\cat(\phi)=\cd(\phi)=1.$

We recall that the Berstein–Schwarz class of discrete group $\Gamma$ is also defined as the image of the connecting homomorphism $\delta:H^{0}(B\Gamma;\Z)\to H^{1}(B\Gamma;I(\Gamma))$, i.e., $\delta(1)=\beta_{\Gamma},$ where $I(\Gamma)$ is the augmentation ideal of the group ring $\Z\Gamma$ in \cite{DR}.

Since $\phi:F_{n}\to F_{m}$ is an epimorphism, there exists a section $s:F_{m}\to F_{n}$ with $\phi s=1_{F_{m}}.$ 

We have the following commutative diagram of the short exact sequences 

$$\begin{tikzcd}
 0 \arrow[r] & I(F_{n}) \arrow[d, "\bar{\phi}"] \arrow[r] & \Z F_{n} \arrow[d, "\bar{\phi}"] \arrow[r, "\epsilon"] & \Z \arrow[d, "1"] \arrow[r] & 0 \\
  0 \arrow[r] & I(F_{m}) \arrow[r] \arrow[bend right=60,swap]{u}{\bar{s}} & \Z F_{m} \arrow[r, "\epsilon"] \arrow[bend right=60,swap]{u}{\bar{s}} & \Z \arrow[r] \arrow[u]& 0 
\end{tikzcd}
$$
where $\bar{\phi}$ and $\bar{s}$ are induced maps of $\phi$ and $s$ respectively.

The above short exact sequences give the following commutative diagrams

$$\begin{tikzcd}
H^{0}(BF_{n};\Z) \arrow[dd,"\cong"] \arrow[r,"\delta"] & H^{1}(BF_{n};I(F_{n})) \arrow[rd, "I(\phi)_{*}"]&  \\
& &  H^{1}(BF_{n};I(F_{m})) \arrow[dl, "s^{*}"] \arrow[bend right=60,swap]{ul}{I(s)_{*}}\\
H^{0}(BF_{m};\Z) \arrow [uu]\arrow[r, "\delta"] & H^{1}(BF_{m};I(F_{m})) \arrow[bend right=60,swap]{ru}{\phi^{*}} &    
\end{tikzcd}
$$

Note that we can treat $I(F_{n})$ is $F_{m}-$ module via $\phi$ while $I(F_{m})$ is $F_{n}-$ module via $s$. Hence, all homomorphisms are well-defined. The commutative diagram give us the conclusion of the corollary: $\delta(1)=\beta_{F_{n}}=I(s)_{*}\circ\phi^{*}(\beta_{F_{m}})$ and $\delta(1)=\beta_{F_{m}}=s^{*}\circ I(\phi)_{*}(\beta_{F_{n}})$ 
\end{proof}

Let $\Gamma=\pi_{1}(M_{g})$ and $\Lambda_{1}(M_{g'})$ be the fundamental groups of orientable, closed surfaces $M_{g}$ and $M_{g'}$ where $g,g'$ are positive genus numbers.

\begin{thm}\label{surface groups}
 Let $\phi:\Gamma\to\Lambda$ be an epimorphism of the fundamental groups of the closed surfaces of $M_{g}$ and $M_{g'}.$ Then $$\cat(\phi)=\cd(\phi)=1$$ if and only if $\phi$ factors through the free groups via epimorphisms $\Gamma\to F_{n}\to \Lambda.$ Otherwise, $\cat(\phi)=\cd(\phi)=2.$  
\end{thm}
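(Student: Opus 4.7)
My plan is to sandwich both $\cat(\phi)$ and $\cd(\phi)$ between $1$ and $2$, then establish that both equal $1$ exactly when $\phi$ factors through a free group, so that the ``otherwise'' case follows by elimination. For the sandwich bounds, I start with $\cat(\phi)\leq\cat(B\Gamma)=\cat(M_g)=2$ (the map-version inequality follows by restricting a categorical cover of $B\Gamma$). For the lower bound, since $\Lambda$ is a nontrivial surface group, $\phi$ is a nonzero epimorphism, so $\Gamma$ is nontrivial and Proposition \ref{char. of group hom} gives $\cd(\phi)\geq 1$. Combined with $\cd(\phi)\leq\cat(\phi)$, this yields $1\leq\cd(\phi)\leq\cat(\phi)\leq 2$.

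For the ``if'' direction, assume $\phi=\psi_2\circ\psi_1$ with both $\psi_i$ epimorphisms and $\psi_1\colon\Gamma\twoheadrightarrow F_n$. The $\cat$ analog of Proposition \ref{Composition} (proved by pulling back categorical covers, exactly as in that proposition) gives $\cat(\phi)\leq\cat(\psi_2)\leq\cat(BF_n)=1$, which combined with $\cd(\phi)\geq 1$ forces both invariants to equal $1$.

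The essential content is the converse: if $\cd(\phi)\leq 1$, then $\phi$ factors through a free group via epimorphisms. By the definition of $\cd(\phi)$, the map $\phi^*\colon H^k(\Lambda;M)\to H^k(\Gamma;M)$ vanishes for all $k\geq 2$ and all $\Z\Lambda$-modules $M$. Specializing to $k=2$ and $M=\Z$ with trivial $\Lambda$-action shows that $B\phi\colon M_g\to M_{g'}$ has degree zero on top integral cohomology, i.e.\ $\deg(B\phi)=0$. I would then realize $B\phi$ as a continuous map $f\colon M_g\to M_{g'}$ and invoke the classical fact that any degree-zero map between closed orientable surfaces is homotopic to a map missing a regular value $p\in M_{g'}$. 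Since $M_{g'}\setminus\{p\}$ deformation retracts onto a wedge of $2g'$ circles $K$, this produces a factorization $B\phi\simeq(K\hookrightarrow M_{g'})\circ g$ for some $g\colon M_g\to K$, yielding on $\pi_1$ the desired factorization $\phi\colon\Gamma\to\pi_1(K)=F_{2g'}\to\Lambda$; both factors are epimorphisms because their composite is. Contrapositively, if no such free factorization exists then $\cd(\phi)\geq 2$, hence $\cd(\phi)=\cat(\phi)=2$ by the sandwich, which settles the ``otherwise'' clause.

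The main obstacle is the geometric step of homotoping $f$ off a regular value when $\deg(f)=0$. A transversality argument makes $f^{-1}(p)$ a signed finite set of total index zero; one then pairs preimages of opposite sign and cancels them by local isotopies supported on small disks in $M_g$. Arranging these cancellations to be simultaneously disjoint, and verifying that the local modifications do not alter the conjugacy class of $\phi$ on $\pi_1$, is the delicate point; I would either spell out a Whitney-style cancellation directly or appeal to Edmonds' deformation theory for maps of surfaces to justify this reduction rigorously.
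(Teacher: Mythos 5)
Your skeleton (the sandwich $1\leq\cd(\phi)\leq\cat(\phi)\leq 2$ via Proposition \ref{char. of group hom}, and the ``if'' direction by composing through $BF_n$ with $\cat(F_n)=1$) matches the paper, but your converse takes a genuinely different route. The paper argues purely homotopy-theoretically: $\cat(\phi)=1$ and the Ganea--Schwarz criterion lift $B\phi$ to the first Ganea space $G_1(B\Lambda)$, which is homotopy equivalent to the $1$-dimensional complex $\Sigma\Lambda$ (a wedge of circles), so the lift factors $\phi$ through a free group; this needs the $\cat$ hypothesis but no surface topology and works for an arbitrary target group. You instead use only $\cd(\phi)\leq 1$: evaluating on $H^2$ with trivial $\Z$ coefficients gives $\deg(B\phi)=0$, and you then invoke the surface-specific theorem (Edmonds) that a degree-zero map between closed orientable surfaces is homotopic to a non-surjective map, hence compresses into a wedge of circles. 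Provided you actually cite Edmonds' deformation theorem rather than rely on your sketched transversality-and-cancellation argument (arranging the cancelling disks and controlling the homotopy is precisely the nontrivial content of that theorem, so the sketch alone would be a gap), your argument is correct, and it even buys something the paper's does not: it shows $\cd(\phi)=1$ by itself forces the free factorization, which cleanly excludes the configuration $\cd(\phi)=1$, $\cat(\phi)=2$ in the ``otherwise'' clause --- a configuration the paper's Step 2 rules out only by negating the conjunction $\cat(\phi)=\cd(\phi)=1$, and which its own Remark flags as delicate for general aspherical complexes. The trade-off is that your proof is tied to closed orientable surfaces and to positive genus of $M_{g'}$ (needed so that $M_{g'}$ is aspherical and $B\phi$ is realized by a surface map), whereas the paper's Ganea argument generalizes.

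One small repair: ``both factors are epimorphisms because their composite is'' is not right for the first factor; $\Gamma\to\pi_1(K)=F_{2g'}$ need not be onto. Replace $F_{2g'}$ by the image of $\Gamma$ in it, which is again free by Nielsen--Schreier, to obtain the factorization through epimorphisms $\Gamma\to F_n\to\Lambda$ demanded by the statement (the same normalization is implicitly needed in the paper's lift to $G_1(B\Lambda)$).
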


\begin{proof}
By Proposition \ref{char. of group hom}, we always obtain $\cd(\phi)\geq 1$ since $\Gamma$ is not the trivial group. 

We consider two steps: 

{\em Step 1.} 
Now we suppose the epimorphism $\phi:\Gamma\to\Lambda$ factors through the free groups via epimorphisms $\Gamma\to F_{n}\to \Lambda.$ Since $\cd(\phi)\leq \cat(\phi)$ (see \cite{DK}), it suffices to show $\cat(\phi)\leq 1.$

This easily follows from the classical LS-category. Indeed, $$\cat(\phi)\leq \min\{\cat(h),\cat(g)\}\leq \cat(F_{n})=1,$$ where $h:B\Gamma\to BF_{n}$ and $g:BF_{n}\to B\Lambda$ are maps that induced epimorphisms $h_{*}:\Gamma\to F_{n}$ and $g_{*}:F_{n}\to \Lambda.$

Now conversely $\cat(\phi)=\cd(\phi)=1,$ then using the Ganea-Schwarz approach to LS-category of induced map $B\phi,$ we obtain that the induced map $B\phi:B\Gamma\to B\Lambda$ can be lifted to the Ganea's space $G_{1}(B\Lambda).$ Note that $G_{1}(B\Lambda)$ is homotopy equivalent to 1-dimensional complex $\Sigma\Lambda$, the reduced suspension of $\Lambda$ \cite{CLOT}. Then the epimorphism $\phi$ factors through a free group $F_{n}$ via epimorphisms $\Gamma\to F_{n}\to\Lambda.$    

{\em Step 2.} By the classical LS-category, $\cd(\phi)\leq \min\{\cat(\Gamma),\cat(\Lambda)\}=2$ since $\cat(\Gamma)=2$ and $\cat(\Lambda)=2.$ By {\em Step 1}, we have that $\cat(\phi)=\cd(\phi)\neq 1$ since the epimorphism $\phi$ does not factor through the free groups via epimorphisms $\Gamma\to F_{n}\to\Lambda$. Since $\cd(\phi)\geq 1$, we get the conclusion of Theorem $\cat(\phi)=\cd(\phi)=2.$  
\end{proof}

\begin{rmk}
We only use the surface groups in {\em Step 2.} Note that the proof of {\em Step 1} only requires a nonzero epimorphism and $\cat(\phi)=\cd(\phi)=1.$ Otherwise, consider any degree one map $f:T^{2}\to S^{2}$, then $\cat(f)=1$ and $\cd(f_{*})=0.$ Moreover, the epimorphism $f_{*}$ is not nonzero. In the case of aspherical spaces, there is also {\em the open problem} that states that there is a group epimorphism $\phi$ of finite aspherical spaces with $\cd(\phi)=1$ and $\cat(\phi)=2.$ (More details, see \cite{DK,DD}). That is why we need the condition $\cat(\phi)=\cd(\phi)=1$ for finite aspherical spaces instead of just $\cd(\phi)=1.$            
\end{rmk}

We need the product formula for cohomological dimension of group homomorphisms to prove the main results.

\begin{thm}[\cite{DD}]\label{DD} For every homomorphism $\phi:\Gamma\to\Lambda$ of geometrically finite groups $\cd(\phi\times\phi)=2\cd(\phi).$
\end{thm}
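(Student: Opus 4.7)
The plan is to prove the two inequalities $\cd(\phi\times\phi)\leq 2\cd(\phi)$ and $\cd(\phi\times\phi)\geq 2\cd(\phi)$ separately, using respectively a chain-level argument and a universality-of-Berstein-Schwarz argument. The setup for both directions is the assumption $n=\cd(\phi)$ and use of the geometric finiteness hypothesis to keep everything finitely generated.

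For the upper bound, the Dranishnikov-De Saha theorem recalled just above supplies a chain map $\psi_*:P_*(\Gamma)\to P_*(\Lambda)$ chain homotopic to $\phi_*$ with $\psi_k=0$ for $k>n$. Since $\Gamma$ and $\Lambda$ are geometrically finite, one may take each resolution $P_*$ to be finitely generated, and then $P_*(\Gamma)\otimes P_*(\Gamma)$ and $P_*(\Lambda)\otimes P_*(\Lambda)$ are themselves projective resolutions of $\Z$ over $\Z(\Gamma\times\Gamma)$ and $\Z(\Lambda\times\Lambda)$. The chain map induced by $\phi\times\phi$ between them is chain homotopic to $\psi_*\otimes\psi_*$, which vanishes in all degrees above $2n$. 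Consequently $(\phi\times\phi)^*$ is trivial on $H^k$ with coefficients in every $\Z(\Lambda\times\Lambda)$-module when $k>2n$, giving $\cd(\phi\times\phi)\leq 2n$.

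For the lower bound, universality (Theorem \ref{universal}) gives that $\cd(\phi)=n$ is witnessed by the Berstein-Schwarz power, that is, $\phi^*((\beta_\Lambda)^n)\neq 0$ in $H^n(\Gamma;I(\Lambda)^{\otimes n})$. Writing $p_1,p_2:\Lambda\times\Lambda\to\Lambda$ for the projections, form the external cross product
$$v \;=\; p_1^*\bigl((\beta_\Lambda)^n\bigr)\smile p_2^*\bigl((\beta_\Lambda)^n\bigr)$$
sitting in $H^{2n}(\Lambda\times\Lambda;M)$ for the diagonal $(\Lambda\times\Lambda)$-module $M=I(\Lambda)^{\otimes n}\otimes I(\Lambda)^{\otimes n}$. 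Pulling back by $\phi\times\phi$ yields $\phi^*((\beta_\Lambda)^n)\times\phi^*((\beta_\Lambda)^n)$, and the Künneth formula, valid under geometric finiteness, ensures this external product is nonzero because both factors are. Hence $(\phi\times\phi)^*\neq 0$ in degree $2n$, so $\cd(\phi\times\phi)\geq 2n$.

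The main obstacle is the nonvanishing of the cross product with non-field local coefficients. The cleanest route is to build the external product at the chain level via the Eilenberg-Zilber comparison between $P_*(\Gamma)\otimes P_*(\Gamma)$ and $P_*(\Gamma\times\Gamma)$, and then to use geometric finiteness to ensure that the Künneth short exact sequence is well-behaved and that classes with prescribed nonzero tensor-factor coordinates survive into $H^{2n}$ of the product. Absent geometric finiteness, controlling the Tor term in Künneth becomes delicate, which is why the hypothesis is essential at exactly this step.
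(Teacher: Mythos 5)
First, note that the paper does not prove this statement at all: it is quoted from \cite{DD} as an external result, so your attempt can only be judged on its own merits. Your upper bound is essentially sound: the \cite{DD} chain-level theorem gives an equivariant homotopy $\phi_*\simeq\psi_*$ with $\psi_k=0$ for $k>n$, the complexes $P_*(\Gamma)\otimes P_*(\Gamma)$ and $P_*(\Lambda)\otimes P_*(\Lambda)$ are projective resolutions over the product group rings (projectivity and exactness only use that projective $\Z\Gamma$-modules are $\Z$-free, so finite generation is not even needed here), and $\psi_*\otimes\psi_*$ vanishes above degree $2n$, killing $(\phi\times\phi)^*$ in those degrees for every coefficient module. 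That direction is fine.

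The genuine gap is in the lower bound. Universality (Theorem \ref{universal}) does give $\phi^*\bigl((\beta_\Lambda)^n\bigr)\neq 0$, and geometric finiteness does give the Hom--tensor interchange $\mathrm{Hom}_{\Gamma}(P_*,A)\otimes\mathrm{Hom}_{\Gamma}(P_*,A)\cong\mathrm{Hom}_{\Gamma\times\Gamma}(P_*\otimes P_*,A\otimes A)$, and since these Hom complexes are degreewise $\Z$-free with free coboundary subgroups the K\"unneth sequence applies and the cross-product map $\bigoplus_{p+q=2n}H^p(\Gamma;A)\otimes H^q(\Gamma;A)\to H^{2n}(\Gamma\times\Gamma;A\otimes A)$ is injective. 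But all this only reduces the problem to showing that $\phi^*\bigl((\beta_\Lambda)^n\bigr)\otimes\phi^*\bigl((\beta_\Lambda)^n\bigr)$ is nonzero in the tensor product of abelian groups $H^n(\Gamma;I(\Lambda)^{\otimes n})\otimes H^n(\Gamma;I(\Lambda)^{\otimes n})$, and that is exactly the step you assert without proof: a tensor product of two nonzero elements of abelian groups can vanish (for instance $u\otimes u=0$ for any $u\in\Q/\Z$, or torsion against divisibility), and nothing in the geometric finiteness of $\Gamma$ and $\Lambda$ makes these cohomology groups finitely generated or torsion-free, since the coefficients $I(\Lambda)^{\otimes n}$ are enormous $\Z\Lambda$-modules. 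Your closing paragraph, invoking Eilenberg--Zilber so that ``classes with prescribed nonzero tensor-factor coordinates survive,'' restates this obstacle rather than resolving it. This nonvanishing is precisely the hard content of the product formula in \cite{DD}, which requires a genuinely different and more delicate argument than a formal K\"unneth manipulation; as written, your proof of $\cd(\phi\times\phi)\geq 2\cd(\phi)$ is incomplete.
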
 

\begin{cor}\label{product formula}
    For every homomorphism $\phi:\Gamma\to\Lambda$ of geometrically finite groups $\cd(\phi^{n})=n\cd(\phi),$ where $\phi^{n}=\phi\times\cdots\times\phi:\Gamma^{n}\to\Lambda^{n}.$
\end{cor}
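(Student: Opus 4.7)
The plan is to prove the corollary by combining Theorem~\ref{DD} (the $n=2$ case) with a doubling trick and a subadditivity inequality for $\cd$ on products. Throughout I use that finite products of geometrically finite groups remain geometrically finite, so Theorem~\ref{DD} applies equally well to $\phi^{m}:\Gamma^{m}\to\Lambda^{m}$ for every $m$.

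First I would establish $\cd(\phi\times\phi') \le \cd(\phi) + \cd(\phi')$ for any pair of homomorphisms of geometrically finite groups, using the preceding chain-map characterization due to \cite{DD}. Pick chain maps $\psi_{*}$ and $\psi'_{*}$ between projective resolutions realizing $\phi$ and $\phi'$ with $\psi_{k}=0$ for $k>\cd(\phi)$ and $\psi'_{k}=0$ for $k>\cd(\phi')$. Because $P_{*}(\Gamma)\otimes P_{*}(\Gamma')$ is a projective resolution of $\mathbb{Z}$ over $\mathbb{Z}[\Gamma\times\Gamma']$, the tensor product $\psi_{*}\otimes\psi'_{*}$ represents the induced map $(\phi\times\phi')^{*}$ on cohomology, and it vanishes in every degree $k>\cd(\phi)+\cd(\phi')$: any summand $P_{i}\otimes P'_{j}$ with $i+j=k$ must satisfy either $i>\cd(\phi)$ or $j>\cd(\phi')$. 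Iterating this inequality yields the easy upper bound $\cd(\phi^{n})\le n\cd(\phi)$.

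Next I would prove the formula for powers of two by induction on $k$: the case $k=1$ is Theorem~\ref{DD}, and for the inductive step one writes $\phi^{2^{k+1}}=\phi^{2^{k}}\times\phi^{2^{k}}$ and applies Theorem~\ref{DD} to the homomorphism $\phi^{2^{k}}$ to conclude $\cd(\phi^{2^{k+1}})=2\cd(\phi^{2^{k}})=2^{k+1}\cd(\phi)$. Finally, for an arbitrary $n$, pick $k$ with $2^{k}\ge n$ and factor $\phi^{2^{k}}=\phi^{n}\times\phi^{2^{k}-n}$. Subadditivity together with the upper bound applied to $\phi^{2^{k}-n}$ gives
$$2^{k}\cd(\phi) = \cd(\phi^{2^{k}}) \le \cd(\phi^{n}) + \cd(\phi^{2^{k}-n}) \le \cd(\phi^{n}) + (2^{k}-n)\cd(\phi),$$
so $\cd(\phi^{n})\ge n\cd(\phi)$, which combined with the upper bound yields the desired equality.

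The main obstacle is the subadditivity step: it is not explicitly recorded in the excerpt, and a careful proof requires the K\"unneth-type identification of $P_{*}(\Gamma)\otimes P_{*}(\Gamma')$ with a projective resolution for $\Gamma\times\Gamma'$, together with the functorial compatibility of this identification with the chain maps induced by $\phi$ and $\phi'$. Once that identification is in place, the tensor-product degree count is routine and the doubling trick packages Theorem~\ref{DD} into the full product formula for arbitrary $n$.
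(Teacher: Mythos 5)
Your argument is correct and follows essentially the same route as the paper: the upper bound $\cd(\phi^{n})\le n\cd(\phi)$ by subadditivity, the equality $\cd(\phi^{2^{k}})=2^{k}\cd(\phi)$ by induction from Theorem~\ref{DD}, and the splitting $\phi^{2^{k}}=\phi^{n}\times\phi^{2^{k}-n}$ to force the lower bound. The only difference is that the subadditivity step you flag as the main obstacle is simply cited in the paper from Lemma~6.1 of \cite{DD}, whereas you sketch it directly via tensor products of projective resolutions; your sketch is sound, but no new idea is needed beyond the cited lemma.
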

\begin{proof}
    The first inequality $\cd(\phi^{n})\leq n\cd(\phi)$ follows by \cite[Lemma 6.1]{DD} and the induction on $n$.

    To show the inequality $\cd(\phi^{n})\geq n\cd(\phi).$
By induction on $k$ we obtain the equality $\cd(\phi^{2^{k}})=2^{k}\cd(\phi)$ from Theorem \ref{DD}.

For $n<2^{k}$ the equality 
$2^{k}\cd(\phi)=\cd(\phi^{2^{k}})\leq \cd(\phi^{n})+\cd(\phi^{2^{k}-n})\leq \cd(\phi^{n})+(2^{k}-n)\cd(\phi)$ implies $n\cd(\phi)\leq \cd(\phi^{n}).$
    
\end{proof}

\section{The $D-$topological complexity introduced by Farber and Oprea}

Let $\Updelta$ be the diagonal subgroup of $\Lambda^{r}.$ Let $D_{\Lambda}$ be the minimal family of subgroups of $\Lambda^{r}$ containing the diagonal subgroup $\Updelta$ and the trivial group, which is closed under conjugations and taking finite intersections.

In \cite{FO}, Farber and Oprea introduced $D-$topological complexity of a path-connected topological space. Here we modify their definition to give $D_{\Lambda}-$topological complexity of a map $f:X\to Y$ between path-connected topological spaces $X, Y$. 
Let $\pi_{1}(X,x_{0})=\Gamma$, $\pi_{1}(Y,y_{0})=\Lambda$ and $y_{0}=f(x_{0})$. 

\begin{defn}
The $D_{\Lambda}-$topological complexity, $TC_{r}^{D_{\Lambda}}(f),$ is defined as the minimal number $k$ such that $X^{r}$ is covered by k+1 open subsets $\{U_{0},\cdots,U_{k}\}$ with property that for each $U_{i}$ and for any choice of the base point $\bar{u}\in U_{i}$, the induced the homomorphism $f_{*}^{r}|_{U_{i}}:\pi_{1}(U_{i},\bar{u})\to \pi_{1}(Y^{r},f^{r}(\bar{u}))$ takes values in a subgroup conjugate to the diagonal $\Updelta$ of $\Lambda^{r}$.   

\end{defn}

\begin{rmk}\label{projection}
If the map is the identity map, then we recover Farber' and Oprea' invariant. This definition is also equivalent to saying that the projections from $f^{r}(Z)$ to the $j+1$ factor of $Y^{r}$ are homotopic, where $j=0,..,r-1.$   
\end{rmk}
Let $X$ and $Y$ be $CW-$complexes and let $X^{(n)}$ (respectively $Y^{(n)}$) denote the $n$-skeleton of $X$ (respectively $Y$).
We recall that a continuous map $f:X\to Y$ is said to be cellular if it takes n-skeletons of $X$ to $n$-skeletons of $Y$, $f(X^{(n)})\subset Y^{(n)}$, for all non-negative integers $n$. Note that we always assume that CW-complex has only one zero cell. 

We obtain the following the generalised version of Lemma 4.2 in \cite{FO} and Theorem 6.2 in \cite{EFMO}.
\begin{thm}\label{FO}
Let $f:X\to Y$ be a cellular map between finite aspherical spaces with $\pi_{1}(X)=\Gamma, \pi_{1}(Y)=\Lambda$. Then $TC_{r}^{D_{\Lambda}}(f)=TC_{r}(f)=\secat((f^{r})^{*}q)$ where $q:\hat{Y^{r}}\to Y$ be the connected covering space corresponding to diagonal subgroup $\Updelta_{\Lambda}$ of $\Lambda^{r}$.   
\end{thm}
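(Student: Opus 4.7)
The plan is to establish the three-way equality by proving the cycle of inequalities
\[
TC_r^{D_\Lambda}(f) \;\leq\; TC_r(f) \;\leq\; \secat((f^r)^*q) \;\leq\; TC_r^{D_\Lambda}(f),
\]
invoking Theorem \ref{sequential TC} at each step to translate between the motion-planning, deformation, and lifting characterizations, and using the asphericity hypothesis only where it is essential.

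For the first inequality $TC_r^{D_\Lambda}(f)\le TC_r(f)$, I would start with an open cover $U_0,\dots,U_k$ of $X^r$ admitting sequential $f$-motion planners. Theorem \ref{sequential TC}(3) tells me that the $r$ coordinate projections of $f^r|_{U_i}$ are pairwise homotopic as maps $U_i\to Y$. Passing to $\pi_1$ with a fixed basepoint $\bar u\in U_i$, homotopic maps induce conjugate homomorphisms, so the components of $(f^r|_{U_i})_\ast\colon \pi_1(U_i,\bar u)\to \Lambda^r$ agree up to conjugation by fixed elements of $\Lambda$. Consequently the image lies in a conjugate of the diagonal $\Updelta\le \Lambda^r$, which is exactly the $D_\Lambda$-condition.

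For $\secat((f^r)^*q)\le TC_r^{D_\Lambda}(f)$, I would note that any open subset of the CW complex $X^r$ is locally path-connected, so I may refine the cover and assume each $U_i$ is path-connected. The $D_\Lambda$-condition says $(f^r|_{U_i})_\ast(\pi_1(U_i,\bar u))$ sits in a conjugate of $\Updelta$; by choosing the basepoint $\hat y\in q^{-1}(f^r(\bar u))$ appropriately, we can arrange $q_\ast(\pi_1(\hat Y^r,\hat y))$ to realize this very conjugate. The classical covering-space lifting criterion then produces a continuous lift $\tilde f\colon U_i\to \hat Y^r$ of $f^r|_{U_i}$, which is the same datum as a partial section of $(f^r)^\ast q$ over $U_i$.

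For the remaining inequality $TC_r(f)\le \secat((f^r)^*q)$, which is where asphericity enters decisively, a section of $(f^r)^*q$ over $U_i$ again amounts to a lift $\tilde f\colon U_i\to \hat Y^r$ of $f^r|_{U_i}$. Because $Y$ is aspherical, $Y^r=K(\Lambda^r,1)$, and the connected covering $\hat Y^r$ corresponding to $\Updelta\le \Lambda^r$ is a $K(\Updelta,1)\simeq K(\Lambda,1)\simeq Y$. Since both $\hat Y^r$ and $Y^r$ are aspherical, the covering map $q$ is homotopic (under a homotopy equivalence $\phi\colon Y\xrightarrow{\simeq}\hat Y^r$) to the diagonal inclusion $\Updelta_Y\colon Y\to Y^r$, because any two maps between $K(\pi,1)$-spaces realizing the same homomorphism on $\pi_1$ are homotopic. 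Therefore $f^r|_{U_i}=q\tilde f\simeq \Updelta_Y\circ(\phi^{-1}\tilde f)$ deforms into $\Updelta_Y(Y)\subseteq Y^r$, and by Theorem \ref{sequential TC}(4) we obtain a sequential $f$-motion planner on $U_i$.

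The main obstacle is this last step: the identification of the covering $q\colon \hat Y^r\to Y^r$ with the diagonal inclusion up to homotopy is exactly where the asphericity of $Y$ cannot be dropped, and it is the precise reason the theorem generalizes Farber--Oprea's result from spaces to maps. Everything else is a routine translation via Theorem \ref{sequential TC}, but this identification converts the purely cohomological/covering-theoretic invariant $\secat((f^r)^*q)$ into the geometric motion-planning invariant $TC_r(f)$.
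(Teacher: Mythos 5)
Your proposal is correct, but it is organized differently from the paper's argument. The paper proves the two equalities separately: the equality $TC_{r}^{D_{\Lambda}}(f)=TC_{r}(f)$ is obtained by citing Lemma 4.2 of Farber--Oprea together with Remark \ref{projection}, and the equality $TC_{r}(f)=\secat((f^{r})^{*}q)$ is obtained by observing that the fibrations $p_{r}\colon PY\to Y^{r}$ and $q\colon \hat{Y^{r}}\to Y^{r}$ are fiber homotopy equivalent (both total spaces being aspherical with the same image of $\pi_{1}$), so their pullbacks along $f^{r}$ have the same sectional category, and then invoking $TC_{r}(f)=\secat((f^{r})^{*}p_{r})$ from Theorem \ref{sequential TC}. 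You instead close a cycle of three inequalities by hand: motion planners give homotopic coordinate projections, whose basepoint tracks conjugate the components of $(f^{r}|_{U_{i}})_{*}$ into a conjugate of $\Updelta$ (this reproves, rather than cites, the Farber--Oprea step and does not really use cellularity); the $D_{\Lambda}$-condition plus the covering-space lifting criterion yields partial sections of $(f^{r})^{*}q$; and asphericity identifies $q$, up to homotopy, with the diagonal inclusion $\Updelta_{Y}\colon Y\to Y^{r}$, so a lift deforms $f^{r}|_{U_{i}}$ into $\Updelta(Y)$ and Theorem \ref{sequential TC} produces a planner. This last identification is exactly the same underlying fact as the paper's fiber homotopy equivalence, so the mathematical content coincides; your version buys self-containedness (no citation of Lemma 4.2, no fiberwise machinery), while the paper's version is shorter and makes the comparison of the two fibrations over $Y^{r}$ explicit. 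One small repair: in the step $\secat((f^{r})^{*}q)\leq TC_{r}^{D_{\Lambda}}(f)$ you should not ``refine the cover'' to path-connected pieces, since that could increase the number of open sets; instead apply the lifting criterion on each path component of $U_{i}$ (these are open because $X^{r}$ is locally path-connected) and glue the resulting lifts into a single partial section over $U_{i}$, which preserves the count.
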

\begin{proof}
Since $f:X\to Y$ is a cellular map, the proof of the first equality follows from Lemma 4.2 in \cite{FO} and Remark \ref{projection}.

The second equality follows from the following commutative diagram
$$\begin{tikzcd}
(f^{r})^{*}PY \arrow[r, "\bar{f^{r}}"] \arrow[d,"(f^{r})^{*}p_{r}"] & PY \arrow[r] \arrow[d,"p_{r}"] & \hat{Y^{r}} \arrow[l] \arrow[dl,"q"]\\
  X^{r} \arrow[r,"f^{r}"] & Y^{r} \\  
\end{tikzcd}
$$
We recall that the Deck transformation $\Lambda$ acts on the universal cover of Y, $\tilde{Y}$ \cite{Ha}. The product of Deck transformations, $\Lambda^{r}$ acts component-wise on the product of universal covers of $Y^{r}$, $\tilde{Y^{r}}.$ The orbit space $\hat{Y^{r}}:=\tilde{Y^{r}}/\Updelta_{\Lambda}$ where the action of $\Lambda^{r}$ on $\tilde{Y^{r}}$ restrict on the diagonal subgroup $\Updelta_{\Lambda}.$ 

Note that the fibrations $p_{r}$ and $q$ are a fiber homotopy equivalence since $\hat{Y^{r}}$ and $PY$ are aspherical spaces. Thus, the pull-back fibrations $(f^{r})^{*}p_{r}$ and $(f^{r})^{*}q$ are a fiber homotopy equivalent. Since $TC_{r}(f)=\secat((f^{r})^{*}p_{r})$ by Theorem \ref{sequential TC}, we get the second equality.  
    
\end{proof}

\begin{rmk}
By Theorem \ref{FO}, we always have the following inequalities $TC_{r}^{D_{\Lambda}}(f)\leq TC_{r}(f)$ for any map of cell complexes. This inequality can be strict in general. For example, take Hopf bundle $S^{1}\to S^{3}\overset{h}{\to} S^{2}.$ Since spaces are simply-connected, $TC_{r}^{D_{\Lambda}}(h)=0$. However, $TC_{r}(h)\geq 1$ since $h$ is not null-homotopic, $\cat(h)=1$ and $\cat(h)\leq \cat(h^{r-1})\leq TC_{r}(h)$ by Proposition \ref{TC}.  
\end{rmk}

\section{Group homomorphisms}

Due to the homotopy invariance of classifying spaces, there is a one-to-one correspondence between group homomorphism $\phi:\Gamma\to\Lambda$ and homotopy classes of maps $B\phi:B\Gamma\to B\Lambda$ that induce $\phi$ on the fundamental group. Hence, the following definition is well-defined. 

\begin{defn}
 Let $\phi:\Gamma\to\Lambda$ be a homomorphism.
 \begin{enumerate}
\item The LS-category of $\phi$, $\cat(\phi),$ is defined to be $\cat(B\phi).$
\item The sequential topological complexity of $\phi$, $TC_{r}(\phi),$ is defined to be $TC_{r}(B\phi).$
 \end{enumerate}
\end{defn}

\subsection{Reduction to epimorphisms}

Let $\phi:\Gamma\to\Lambda$ be a nonzero homomorphism. Let us denote $\pi:=\phi(\Gamma)$, $\psi:=\phi:\Gamma\to\pi$, and $i:\pi\to\Lambda$ is the inclusion homomorphism.  

With Dranishnikov \cite{DK}, we proved that $\cat(\phi)=\cat(\psi)$, $\cat(i)=\cat(\pi),$ $\cd(\phi)=\cd(\psi)$, and $\cd(i)=\cd(\pi).$ However, these phenomena do not hold for the sequential topological complexity of group homomorphisms in the general. Thus, we prove them under some assumption on groups.  

\begin{thm}\label{injective}
Let $\phi:\Gamma'\to \Gamma$ be an injective homomorphism and $\Gamma$ is an abelian group. Then $TC_{r}(\phi)=TC_{r}(\Gamma').$    
\end{thm}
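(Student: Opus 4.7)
The plan is to prove the two inequalities separately. The upper bound $TC_{r}(\phi) \leq TC_{r}(\Gamma')$ is immediate from Proposition~\ref{TC}(1), so the content lies in the reverse inequality.

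The key observation is that $\Gamma'$, being isomorphic via $\phi$ to a subgroup of the abelian group $\Gamma$, is itself abelian. Hence $B\Gamma'$ may be realized as a topological abelian group, and in particular as a topological group. This allows me to invoke Theorem~\ref{H space} with $f=B\phi$ to obtain
$$TC_{r}(\phi)=TC_{r}(B\phi)=\cat\bigl((B\phi)^{r-1}\bigr).$$
Next, by functoriality of the classifying space construction, the map $(B\phi)^{r-1}\colon (B\Gamma')^{r-1}\to (B\Gamma)^{r-1}$ is homotopic to $B(\phi^{r-1})$, where $\phi^{r-1}\colon(\Gamma')^{r-1}\to\Gamma^{r-1}$ is again injective (being a product of injective homomorphisms).

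At this point I would apply the result $\cat(i)=\cat(\pi)$ recalled in Section~6 (due to Dranishnikov and the author \cite{DK}), which says that for any injective group homomorphism $i\colon\pi\to\Lambda$ the LS-category of the induced classifying map equals $\cat(\pi)$. Applied to $\phi^{r-1}$, this gives $\cat(B(\phi^{r-1}))=\cat((\Gamma')^{r-1})$. Finally, applying Theorem~\ref{H space} once more, this time to the identity map of the topological group $B\Gamma'$, produces $\cat((B\Gamma')^{r-1})=TC_{r}(B\Gamma')=TC_{r}(\Gamma')$. Chaining these equalities yields the desired lower bound and thus the theorem.

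The main conceptual step is the passage from $TC_{r}$ to $\cat$ afforded by Theorem~\ref{H space}; the abelianness of $\Gamma$ is used solely to ensure $\Gamma'$ is abelian (so that $B\Gamma'$ carries a topological group structure), which is exactly what unlocks this reduction. Once the problem has been rephrased as a statement about $\cat$ of classifying maps of injective homomorphisms, the \cite{DK} result closes the argument. The only small point to verify carefully is that the identification $(B\phi)^{r-1}\simeq B(\phi^{r-1})$ is genuinely a homotopy equivalence of maps, which is standard, and that the chosen model of $B\Gamma'$ as a topological abelian group (e.g.\ an infinite symmetric product or a Milnor-type construction) is homotopy equivalent to the one used implicitly in the definition of $TC_r(\Gamma')$, which is a routine homotopy invariance check.
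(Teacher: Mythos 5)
Your proof is correct, but it follows a genuinely different route from the paper. The paper argues via the covering-space (Ganea--Schwarz) description of the sequential complexity of an aspherical map: it writes $TC_{r}(\phi)=\secat((\phi^{r})^{*}u_{\Updelta})$ and $TC_{r}(\Gamma')=\secat(u'_{\Updelta})$ for the fibrations associated to the diagonal subgroups, and then transfers a section of $(\phi^{r})^{*}u_{\Updelta}$ to a section of $u'_{\Updelta}$ by producing a map $(\phi^{r})^{*}B\Updelta_{\Gamma}\to B\Updelta_{\Gamma'}$ over $B(\Gamma')^{r}$; the abelianness of $\Gamma$ enters there through the lifting criterion, guaranteeing that the relevant image of $\pi_{1}$ lands in the diagonal subgroup $\Updelta_{\Gamma'}$. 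You instead use abelianness only to conclude that $\Gamma'$ is abelian, so that $B\Gamma'$ is a topological group; Theorem~\ref{H space} then converts the problem into LS-category, $TC_{r}(\phi)=\cat(\phi^{r-1})$ and $TC_{r}(\Gamma')=\cat((\Gamma')^{r-1})$, and the equality $\cat(i)=\cat(\pi)$ for injective homomorphisms recalled from \cite{DK} (ultimately Shapiro's lemma plus Eilenberg--Ganea) closes the gap. Both arguments are sound; yours buys a nominally stronger statement (only $\Gamma'$ needs to be abelian, injectivity of $\phi$ suffices on the target side) and avoids the covering-space lifting argument, while the paper's stays inside its own sectional-category machinery and does not lean on the external $\cat(i)=\cat(\pi)$ result. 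One small point worth making explicit in your write-up: in your application of Theorem~\ref{H space} both $B\Gamma'$ and $B\Gamma$ may be chosen to be topological abelian groups with $B\phi$ a continuous homomorphism (e.g.\ via the functorial bar construction), so the hypothesis of that theorem is met in the strongest form and the homotopy invariance of $TC_{r}(-)$ and $\cat(-)$ justifies the change of models, exactly as you indicate.
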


\begin{proof}
By Proposition \ref{TC}, we get $TC_{r}(\phi)\leq TC_{r}(\Gamma').$ To prove other direction, we use the Ganea-Schwarz characterisation of the sequential topological complexity. Note that $TC_{r}(\phi)=\secat((\phi^{r})^{*}u_{\Updelta})$ and $TC_{r}(\Gamma')=\secat(u'_{\Updelta}).$ Existing lift $\tilde{\phi^{r}}$ of map $\phi^{r}$ with respect to $u_{\Updelta}$ gives a section $s$ of $u_{\Updelta},$ i.e. $\tilde{\phi^{r}}=s\phi^{r}.$ Because of pull-back diagram, this section $s$ is also the section of $(\phi^{r})^{*}u_{\Updelta}.$ Hence, we conclude that the composition of this section $s$ and the dash arrow is a section of $u'_{\Updelta}.$   

\begin{tikzcd}
B\Updelta_{\Gamma'} \arrow[dr,"u'_{\Updelta}"]& (\phi^{r})^{*}B\Updelta_{\Gamma} \arrow[r, "\bar{\phi^{r}}"] \arrow[l, shift left=0.5ex,dashed] \arrow[d,"(\phi^{r})^{*}u_{\Updelta}"]& B\Updelta_{\Gamma} \arrow[d, "u_{\Updelta}"]\\
&B(\Gamma')^{r} \arrow[r,"\phi^{r}"]& B\Gamma^{r}
\end{tikzcd}

Now it is left to prove the dash arrow exists. This is equivalent to the following lifting problem $(\phi^{r})^{*}u_{\Updelta}:(\phi^{r})^{*}B\Updelta_{\Gamma}\to B(\Gamma')^{r}$ for the given diagonal cover $u'_{\Updelta}:B(\Updelta_{\Gamma'})\to B(\Gamma')^{r}.$ Note that $B(\Updelta(\Gamma)')$ is homotopy equivalent to $B\Gamma'$. By lifting criterion in \cite{Ha}, the lift exists if and only if $(\phi^{r})^{*}(\Gamma)$ is subgroup of $\Updelta_{\Gamma'}$. Thus, the dashed arrow exists since the condition trivially satisfies when $\Gamma$ is the abelian group.

\end{proof}

\subsection{A group epimorphism}

We use the notation $T(A)$ for the torsion subgroup of an abelian group $A$.
For finitely generated abelian groups we define the rank $rank(A)=rank(A/T(A))$.

We recall the Invariant Factor Decomposition for finite abelian group $T(A)$. 
\begin{thm}[Invariant Factor Decomposition (IDF) for Finite Abelian Groups]{\label{IDF}} Every finite abelian group $T(A)$ can be written uniquely as $T(A)=Z_{n_{1}}\times...\times Z_{n_{k}}$ where the integers $n_{i}\geq 2$ are the invariant factors of $T(A)$ that satisfy $n_{1}|n_{2}|...|n_{k}$ and $\Z_{n_{i}}$ are cyclic group of order $n_{i}, i=1,\cdots,k.$
\end{thm}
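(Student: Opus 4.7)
The plan is to prove the theorem in two stages: first existence via primary decomposition and the structure theorem for abelian $p$-groups, then uniqueness via counting arguments on quotients. The statement is classical, so the strategy follows the standard route through the Fundamental Theorem of Finitely Generated Abelian Groups.

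First I would establish existence. Since $T(A)$ is finite, write its order as $\prod_{i=1}^{s} p_i^{a_i}$ for distinct primes $p_i$. The primary decomposition gives $T(A)=\bigoplus_{i=1}^{s} T_{p_i}(A)$, where $T_{p_i}(A)=\{x\in T(A):p_i^{a_i}x=0\}$ is the $p_i$-primary component; this uses that $\gcd$-Bezout coefficients split $T(A)$ as a direct sum across coprime annihilators. Next, for each prime $p$, I would decompose the finite abelian $p$-group $T_p(A)$ as $\Z_{p^{e_{p,1}}}\oplus\cdots\oplus\Z_{p^{e_{p,k_p}}}$ with $e_{p,1}\leq\cdots\leq e_{p,k_p}$. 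The cleanest argument here is the standard induction: pick $x\in T_p(A)$ of maximal order $p^{e}$, show $\langle x\rangle$ is a direct summand (by lifting a splitting from a cyclic quotient of maximal order), and induct on $|T_p(A)|$. Then I would align the exponent sequences across primes by padding with zeros on the left so that each prime contributes exactly $k=\max_i k_{p_i}$ factors, and define
\[
n_j=\prod_{i=1}^{s} p_i^{e_{p_i,j}},\qquad j=1,\dots,k.
\]
By the Chinese Remainder Theorem, $\Z_{n_j}\cong\bigoplus_i \Z_{p_i^{e_{p_i,j}}}$, so recollecting gives $T(A)\cong \Z_{n_1}\oplus\cdots\oplus\Z_{n_k}$. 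Since $e_{p_i,1}\leq\cdots\leq e_{p_i,k}$ for every $i$, the divisibility $n_1\mid n_2\mid\cdots\mid n_k$ follows prime-by-prime.

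For uniqueness, I would show the sequence $(n_1,\dots,n_k)$ is determined by $T(A)$. The quickest route is to recover each prime's exponent tuple intrinsically: for every prime $p$ and every $i\geq 0$, the quantity
\[
d_{p,i}=\dim_{\mathbb{F}_p}\bigl(p^{i}T(A)\,/\,p^{i+1}T(A)\bigr)
\]
is a group invariant, and a direct computation shows $d_{p,i}=\#\{j:e_{p,j}>i\}$. These dimensions determine the multiset $\{e_{p,j}\}$ for each $p$, hence (together with the monotonicity across $j$) determine the invariant factors $n_j$.

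The main obstacle is the splitting lemma inside the $p$-primary step — that a cyclic subgroup $\langle x\rangle$ of maximal order is a direct summand of $T_p(A)$ — because this is where the induction actually gets traction; everything else (primary decomposition, CRT assembly, dimension-counting for uniqueness) is formal. I would handle the splitting by choosing a complementary subgroup via Zorn/maximality among subgroups intersecting $\langle x\rangle$ trivially, then using maximality of the order of $x$ to show the sum exhausts $T_p(A)$.
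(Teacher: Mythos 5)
Your proposal is the standard, correct proof of the invariant factor decomposition: primary decomposition, the splitting lemma that a cyclic subgroup of maximal order in a finite abelian $p$-group is a direct summand (the one step genuinely needing work, which you identify and sketch correctly via a maximal complement and the maximality of the order of $x$), reassembly of aligned prime-power exponents into $n_j$ by the Chinese Remainder Theorem, and uniqueness by the invariants $\dim_{\mathbb{F}_p}\bigl(p^{i}T(A)/p^{i+1}T(A)\bigr)$. The paper itself states this classical theorem without proof, using it only to define the Smith Normal number $k(T(A))$, so there is no in-paper argument to compare against; your textbook route establishes the statement as claimed.
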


\begin{defn}
Given a finite abelian group $T(A)$, the Smith Normal number $k(T(A))$ of $T(A)$ is the number $k$ from Theorem ~\ref{IDF}.     
\end{defn}

\begin{thm}\label{FGAG}
Let $\phi:\Gamma\to \Lambda$ be an epimorphism of finitely generated abelian groups. 
Then $$TC_{r}(\phi)=(r-1)\cd(\phi).$$ In particular, if $\Gamma$ is free abelain groups, then $TC_{r}(\phi)=(r-1)\cd(\phi)=(r-1)(\rank(\Lambda)+k(T(\Lambda)))$ where $k(T(\lambda))$ is the Smith Normal number for given finite abelian group $T(\Lambda).$  
\end{thm}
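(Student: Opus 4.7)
The plan is to reduce $TC_r(\phi)$ to $\cat(\phi^{r-1})$ via the $H$-space trick and then identify it with $(r-1)\cd(\phi)$ using the product formula for cohomological dimension. Since $\Gamma$ is abelian, $B\Gamma$ may be realised as an $H$-space (indeed an infinite loop space), so Theorem \ref{H space} together with the remark following it yields
\[
TC_r(\phi)=\cat\bigl((B\phi)^{r-1}\bigr)=\cat(\phi^{r-1}),
\]
using $(B\phi)^{r-1}\simeq B(\phi^{r-1})$. Combined with $\cd(\phi^{r-1})=(r-1)\cd(\phi)$ from Corollary \ref{product formula} and the universal inequality $\cd(\psi)\le\cat(\psi)$ (already exploited in Section~4), the lower bound is immediate:
\[
(r-1)\cd(\phi)=\cd(\phi^{r-1})\le\cat(\phi^{r-1})=TC_r(\phi).
\]

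The matching upper bound reduces to the Eilenberg--Ganea-type inequality $\cat(\phi^{r-1})\le\cd(\phi^{r-1})$ for an epimorphism of finitely generated abelian groups. I would first dispose of torsion in the source: if $\phi$ is nontrivial on $T(\Gamma)$ then, following Step~1 of the proof of Proposition \ref{char. of group hom}, $\cd(\phi)=\infty$, and a parallel argument using $TC_r(\Z_n)=\infty$ gives $TC_r(\phi)=\infty$, so both sides of the desired identity agree. For $\Gamma$ torsion-free one has $\Gamma^{r-1}\cong\Z^{n(r-1)}$ and $B\Lambda^{r-1}\simeq T^M\times\prod_{j}B\Z_{m_j}$ with $M=(r-1)\rank\Lambda$; the strategy is to construct an explicit categorical cover of the source torus of size $\cd(\phi^{r-1})+1$ by combining the standard LS-cover of $T^M$ with pullbacks of two-piece covers of each classifying space of a cyclic factor, assembled through an Ostrand-type reindexing of the kind used in the proof of Proposition \ref{TC}.

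For the free-abelian specialisation, set $\Gamma=\Z^n$ and $\Lambda\cong\Z^m\oplus\Z_{n_1}\oplus\cdots\oplus\Z_{n_k}$. The upper bound $\cd(\phi)\le m+k$ follows by tensoring the standard two-term projective resolutions of $\Z$ over $\Z\Z$ (with $m$ factors) and over each $\Z\Z_{n_j}$ (with $k$ factors) to obtain a length-$(m+k)$ projective resolution of $\Z$ over $\Z\Lambda$; the Dranishnikov--De Saha theorem quoted before Proposition \ref{char. of group hom} then forces the induced chain map to vanish in degrees above $m+k$. The lower bound $\cd(\phi)\ge m+k$ is obtained by cupping the $m$ one-dimensional $\Z$-valued generators coming from the free factors with the $k$ one-dimensional classes in $H^1(B\Z_{n_j};\Z_{n_j})$ to produce a nonzero element of $H^{m+k}(\Lambda;M)$ for $M=\Z^{\otimes m}\otimes\Z_{n_1}\otimes\cdots\otimes\Z_{n_k}$, and then verifying via a Künneth/naturality computation that $\phi^*$ carries it to a nonzero generator of $H^{m+k}(\Z^n;M)$.

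The main obstacle I foresee is the Eilenberg--Ganea step $\cat(\phi^{r-1})\le\cd(\phi^{r-1})$, since no such equality for \emph{maps} of aspherical spaces is developed earlier in the paper; realising the explicit categorical cover sketched above requires careful bookkeeping so that the torsion factors contribute the exact number $k(r-1)$ of extra open sets and not more. If this direct construction proves too delicate, my fallback is to prove the free-abelian formula first (which is self-contained through cup length and resolutions) and then reduce the general epimorphism to the free-abelian case by precomposing with a free-abelian cover $\Z^N\twoheadrightarrow\Gamma$ and invoking Propositions \ref{TC} and \ref{Composition} to transfer the upper bound from $\Z^N\to\Lambda$ down to $\Gamma\to\Lambda$.
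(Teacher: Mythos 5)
Your overall skeleton coincides with the paper's: apply Theorem \ref{H space} to get $TC_r(\phi)=\cat(\phi^{r-1})$, use Corollary \ref{product formula} together with $\cd\le\cat$ for the lower bound, and reduce everything to an Eilenberg--Ganea--type equality $\cat(\phi^{r-1})=\cd(\phi^{r-1})$. But precisely that equality, which you correctly identify as the crux, is left unproved in your proposal: the paper imports it wholesale from Theorem 1 of \cite{Ku1} (the $\cat=\cd$ theorem for homomorphisms of finitely generated abelian groups), whereas your explicit-cover sketch (``two-piece covers of each classifying space of a cyclic factor, assembled through an Ostrand-type reindexing'') is not carried out and does not parse as stated, since $B\Z_{m_j}$ has infinite LS-category and admits no two-set categorical cover; what can be made to work is to lift $\phi^{r-1}$ through $\Z^{(m+k)(r-1)}$ using projectivity of the free abelian source, so that $B\phi^{r-1}$ factors through a torus of dimension $(m+k)(r-1)$ and $\cat(\phi^{r-1})\le(m+k)(r-1)=\cd(\phi^{r-1})$. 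Your fallback does not repair this: Proposition \ref{Composition} gives $TC_r(\phi\circ\psi)\le TC_r(\phi)$ for a free-abelian cover $\psi:\Z^N\twoheadrightarrow\Gamma$, which is a \emph{lower} bound on $TC_r(\phi)$, so it cannot ``transfer the upper bound from $\Z^N\to\Lambda$ down to $\Gamma\to\Lambda$.'' Relatedly, your case analysis omits the case $T(\Gamma)\neq 0$ with $\phi(T(\Gamma))=0$; the paper handles it by the retraction $B\Gamma\to B\Z^n$ and Corollary \ref{retraction}, which reduces to a free abelian source, and nothing in your proposal plays this role. (A small further point: $TC_r(\Z_n)=\infty$ by itself does not force $TC_r(\phi)=\infty$, since Proposition \ref{TC} only gives upper bounds through the factors; the correct route is $\cd(\phi)=\infty$ together with $\cd(\phi)\le\cat(\phi)\le TC_r(\phi)$, as in the paper.)

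The computation $\cd(\phi)=\rank(\Lambda)+k(T(\Lambda))$, which the paper simply cites from Theorem 2 of \cite{Ku1}, is also defective as you present it. Your upper bound rests on ``tensoring the standard two-term projective resolutions of $\Z$ over $\Z\Z$ and over each $\Z\Z_{n_j}$ to obtain a length-$(m+k)$ projective resolution of $\Z$ over $\Z\Lambda$''; no such resolution exists: $\Z$ has no finite-length projective resolution over $\Z\Z_{n_j}$ (its minimal resolution is the infinite periodic one), and a length-$(m+k)$ projective resolution over $\Z\Lambda$ would give $\cd(\Lambda)\le m+k$, contradicting $\cd(\Lambda)=\infty$ whenever $T(\Lambda)\neq 0$. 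The fix is again to factor the homomorphism through $\Z^{m+k}$ (lift $\phi$ along $\Z^{m+k}\twoheadrightarrow\Lambda$) and use the length-$(m+k)$ Koszul resolution over $\Z\Z^{m+k}$, so that $\phi^*$ kills $H^j(\Lambda;M)$ for $j>m+k$; the Dranishnikov--De Saha theorem is not needed and does not run in the direction you invoke it. Your cup-product lower bound is sound in spirit, but to guarantee that each component class pulls back to a generator you should first put $\ker\phi\le\Z^n$ in Smith normal form so that $\phi$ becomes the standard coordinatewise quotient before running the K\"unneth/evaluation argument.
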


\begin{proof}
Since the groups $\Gamma$ and $\Lambda$ are finitely generated abelian groups, we may assume $\Gamma=\Z^{n}\oplus T(\Gamma)$ and $\Lambda=\Z^{m}\oplus T(\Lambda)$ for some $m$ and $n$. 

By Theorem 2 in \cite{Ku1}, if both groups $\Gamma, \Lambda$ have torsion and $\phi(T(\Gamma))\neq 0$, then $\cd(\phi)=\infty.$  

Since $\cd(\phi)\leq\cat(\phi)$ and $\cat(\phi)\leq TC_{r}(\phi),$ we get $TC_{r}(\phi)=\infty$ and equality holds trivially. 

Thus, we consider $\phi$ with $\phi(T(\Lambda))=0$.
Such $\phi$ factors through the epimorphism $\bar\phi:\Z^n\to\Lambda$. 
In view of the retraction $B\Gamma\to B\Z^n$, we obtain that
$\cd(\bar\phi)=\cd(\phi)$ and $TC_{r}(\bar\phi)=TC_{r}(\phi)$ by Corollary \ref{retraction}. Therefore, we may assume that $\Gamma=\Z^n$.

Since $B\Z^{n}$ is an $n$ dimensional torus (Topological group), we apply Theorem \ref{H space} . Hence, we get that $TC_{r}(\phi)=\cat(\phi^{r-1}).$ By Theorem 1 in \cite{Ku1}, the analogue of Eilenberg and Ganea Theorem holds for finitely generated abelian groups, i.e. $\cat(\phi^{r-1})=\cd(\phi^{r-1}).$ 

By Corollary \ref{product formula}, $\cd(\phi^{r-1})=(r-1)\cd(\phi)$ and by Theorem 2 in \cite{Ku1},  $\cd(\phi)=\rank(\Lambda)+k(T(\Lambda))$

Combining these all equalities, we get the theorem:

$TC_{r}(\phi)=\cat(\phi^{r-1})=\cd(\phi^{r-1})=(r-1)\cd(\phi)=(r-1)(rank(\Lambda)+k(T(\Lambda))).$
\end{proof}

\begin{rmk}
  Note that even $r=2,$ this theorem is the generalized result of \cite[Theorem 6.12 and Colollary 6.13]{Sc}.    
\end{rmk}

\begin{thm}\label{Free}
Let $\phi:\Gamma\to\Lambda$ be a nonzero epimorphism of free groups. Then $$TC_{r}(\phi)=\begin{cases} r-1 & \mbox{if} \,\,\, \Lambda\cong\Z \\ r & \mbox{if} \,\,\,  \Lambda\cong F_{m}, m>1\end{cases}$$
\end{thm}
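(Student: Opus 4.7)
The plan is to exploit the projectivity of free groups to reduce the computation of $TC_r(\phi)$ to that of $TC_r(\Lambda)$. Since $\Lambda\cong F_m$ is free, the epimorphism $\phi:\Gamma\to\Lambda$ admits a group-theoretic section $s:\Lambda\to\Gamma$ with $\phi\circ s=\mathrm{id}_\Lambda$. Passing to classifying spaces, $Bs$ is a homotopy section of $B\phi$.

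First I would apply Proposition \ref{Composition} to $\phi\circ s=\mathrm{id}_\Lambda$, obtaining $TC_r(\Lambda)=TC_r(\mathrm{id}_\Lambda)=TC_r(\phi\circ s)\leq TC_r(\phi)$. Combining this with the upper bound $TC_r(\phi)\leq TC_r(\Lambda)$ from Proposition \ref{TC}(1), the two sandwich immediately into $TC_r(\phi)=TC_r(\Lambda)=TC_r(F_m)$. Thus it remains only to compute $TC_r(F_m)$.

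When $\Lambda\cong\Z$, $B\Lambda=S^1$ is a topological group, and Theorem \ref{H space} applied to $\mathrm{id}_{S^1}$ yields $TC_r(S^1)=\cat((S^1)^{r-1})=r-1$, since $(S^1)^{r-1}$ is an $(r-1)$-torus. When $\Lambda\cong F_m$ with $m\geq 2$, $B\Lambda\simeq\bigvee_m S^1$ is a $1$-dimensional CW complex, which gives the upper bound $TC_r(F_m)\leq\cat((BF_m)^r)\leq\dim((BF_m)^r)=r$. For the matching lower bound I would invoke Theorem \ref{cup length} with $f=\mathrm{id}$: pick generators $a_1,a_2\in H^1(BF_m;\Z)$, and for each coordinate $j$ let $a_i^{(j)}\in H^1((BF_m)^r;\Z)$ denote the pullback of $a_i$ along the $j$-th projection. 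The classes $u_k:=a_1^{(1)}-a_1^{(k)}$ for $k=2,\dots,r$ and $v:=a_2^{(1)}-a_2^{(2)}$ all lie in $\ker\Updelta^*_{BF_m}$. Since $BF_m$ is $1$-dimensional, any product of two degree-$1$ classes pulled back from the same coordinate vanishes; expanding $v\cdot u_2\cdots u_r\in H^r((BF_m)^r)$ via K\"unneth, the only terms that survive are those in which each of the $r$ coordinates contributes exactly one factor, and a bookkeeping of cases shows the expansion reduces to a nonzero $\Z$-linear combination of two distinct basis monomials (distinguished by whether the $a_2$-class sits in coordinate $1$ or in coordinate $2$). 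Hence $TC_r(F_m)\geq r$.

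The main obstacle is establishing the nonvanishing of that explicit $r$-fold cup product in the $m\geq 2$ case: only a handful of terms in the K\"unneth expansion survive, and one must check that the two surviving monomials are linearly independent rather than accidentally cancelling. Once that combinatorial check is carried out, matching the two bounds produces $TC_r(F_m)=r$ for $m\geq 2$, which together with Step 1 proves the theorem.
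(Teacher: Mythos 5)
Your proof is correct, but it takes a genuinely different route from the paper. Your key move is to use projectivity of free groups: the epimorphism splits, $\phi\circ s=\mathrm{id}_\Lambda$, so by Proposition \ref{Composition} together with homotopy invariance of $TC_r$ of maps you get $TC_r(\Lambda)=TC_r(B\phi\circ Bs)\leq TC_r(\phi)$, which sandwiches with Proposition \ref{TC}(1) to give $TC_r(\phi)=TC_r(\Lambda)$; the theorem then reduces to the known values $TC_r(\Z)=r-1$ and $TC_r(F_m)=r$ for $m>1$ (your integral zero-divisor computation of the lower bound for $F_m$ is a correct reproof of the Farber--Oprea result, and the two surviving K\"unneth monomials indeed cannot cancel since they lie in distinct tensor summands; alternatively you could simply cite \cite{FO}). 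The paper uses the section $s$ too, but only to transport cohomology classes: for $\Lambda\cong\Z$ it proves the lower bound via $\cat(\phi^{r-1})\geq\cd(\phi^{r-1})=(r-1)\cd(\phi)$, relying on the Dranishnikov--De Saha product formula (Corollary \ref{product formula}) and the Berstein--Schwarz class argument of Corollary \ref{free group}; for $\Lambda\cong F_m$ it applies the cup-length bound (Theorem \ref{cup length}) directly to the map $\phi$, using the canonical class $\alpha_{(F_m)}$ with twisted coefficients $I(F_m^{r-1})$ and checking $(\phi^r)^*(\alpha_{(F_m)})^r\neq 0$. Your argument is shorter, avoids twisted coefficients and the $\cd$ product formula entirely, and in fact proves the more general statement that $TC_r(\phi)=TC_r(\Lambda)$ for any split epimorphism; the paper's approach, on the other hand, exercises the machinery (cup-length with local coefficients, $\cd(\phi^n)=n\cd(\phi)$) that is needed elsewhere in the paper for homomorphisms admitting no section. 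The only point you should make explicit is the homotopy-invariance step identifying $TC_r(B\phi\circ Bs)$ with $TC_r(\mathrm{id}_{B\Lambda})$, since $B\phi\circ Bs$ is only homotopic, not equal, to the identity; the paper records this invariance right after Theorem \ref{sequential TC}.
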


\begin{proof} We prove in two steps:

{\em Step 1.} Suppose $\Lambda\cong \Z.$

The upper bound: By Proposition \ref{TC}, $TC_{r}(\phi)\leq TC_{r}(S^{1})=r-1$ since $S^{1}=K(\Z,1)$ and $TC_{r}(S^{1})=r-1$ by \cite{Rud}.

The lower bound: By Proposition \ref{TC} $\cat(\phi^{r-1})\leq TC_{r}(\phi)$. Since $\cd(\phi^{r-1})=(r-1)\cd(\phi)$ by Corollary \ref{product formula} and $\cat(\phi)=\cd(\phi)=1$ by Corollary \ref{free group}, we get that $r-1\leq \cat(\phi^{r-1}).$ Hence, we obtain that $TC_{r}(\phi)=r-1.$
    
{\em Step 2.} $\Lambda\cong F_{m}$ where $F_{m}$ is the free group with $m>1$ generators. Let $\Gamma\cong F_{n}$ where $n\geq m$ since $\phi$ is surjective.

The upper bound: Similar to {\em Step 1}, $TC_{r}(\phi)\leq r$ since $TC_{r}(F_{n})=r$ by \cite{FO}. 

The lower bound: We apply the Theorem \ref{cup length} to show that $TC_{r}(\phi)\geq r.$

We first recall that the canonical cohomological class $$\alpha_{(F_{m})}\in H^{1}(BF_{m}^{r};I(F_{m}^{r-1}))$$ is given by a crossed homomorphism $f_{r,m}:F_{m}^{r}\to I(F_{m}^{r-1})$ such that $f_{r,m}(g_{1},g_{2},\cdots,g_{r})=(g_{1}g_{2}^{-1}-1,g_{2}g_{3}^{-1}-1,,\cdots,g_{r-1}g_{r}^{-1}-1)$ in \cite{EFMO}. Note that the crossed homomorphism $f_{r,m}$ restricted the diagonal $\Updelta_{F_{m}}$ of $F_{m}^{r}$ is trivial, so $\alpha_{(F_{m})}\in \ker(\Updelta_{F_{m}}^{*}).$

Similarly to Corollary \ref{free group}, the canonical cohomological classes of $\alpha_{(F_{m})}\in H^{1}(BF_{m}^{r};I(F_{m}^{r-1}))$ and $\alpha_{(F_{n})}\in H^{1}(BF_{n}^{r};I(F_{n}^{r-1}))$ are related. Indeed, we have the following commutative diagrams:

$$\begin{tikzcd}
 F_{n}^{r} \arrow[rr, "f_{r,n}"] \arrow[d, "\phi^{r}"] & & I(F_{n}^{r-1}) \arrow[bend left=60]{d}{I(\phi^{r-1})}\\
 F_{m}^{r} \arrow[rr, "f_{r,m}"] \arrow[bend left=60]{u}{s^{r}} & & I(F_{n}^{r-1}) \arrow[bend left=60]{u}{I(s^{r-1})}
\end{tikzcd}
$$

where $s:F_{m}\to F_{n}$ is a section for the epimorphism $\phi:F_{n}\to F_{m},$ $I(s^{r-1}), I(\phi^{r-1}),$ $s^{r}$, and $\phi^{r}$ are the obvious morphisms.

Since $\alpha_{(F_{m})}\neq 0$, we obtain that $(\phi^{r})^{*}:H^{1}(BF_{m}^{r};I(F_{m}^{r-1}))\to H^{1}(BF_{n}^{r};I(F_{m}^{r-1}))$ is not trivial homomorphism,i.e. $(\phi^{r})^{*}\alpha_{(F_{m})}\neq 0.$ 

Note that $(\alpha_{(F_{m})})^{r}\neq 0$ in $H^{r}(BF_{m}^{r};I(F_{m}^{r-1})^{\otimes r})$ where $$I(F_{m}^{r-1})^{\otimes r}=I(F_{m}^{r-1})\otimes I(F_{m}^{r-1})\otimes \cdots \otimes I(F_{m}^{r-1}).$$

Thus, $$(\phi^{r})^{*}(\alpha_{(F_{m})})^{r}\neq 0.$$ By Theorem \ref{cup length}, we obtain that $r\leq TC_{r}(\phi).$ Hence, we conclude that $TC_{r}(\phi)=r$ when $\Lambda\cong F_{m}$ and $m>1.$

\end{proof}

We extend the results for other discrete groups using author's previous work on LS-category of group homomorphisms \cite{DK,Ku1,Ku2} and Corollary \ref{product formula}, so we give the sharp estimates. This phenomena is expected since the exact value of the sequential topological complexity of discrete group $\Gamma,$ $TC_{r}(\Gamma)$ is known for a few classes of groups (See \cite{FO,EFMO,BGRT,Rud}).   

\begin{thm}
 Let $\phi:\Gamma\to\Lambda$ be an epimorphism of surface groups. Then $$r-1\leq TC_{r}(\phi)\leq r \,\,\,\,\textit{if} \,\,\,\, \cat(\phi)=\cd(\phi)=1.$$ Otherwise, $2(r-1)\leq TC_{r}(\phi)\leq 2r$ if $\cat(\phi)=\cd(\phi)=2.$
\end{thm}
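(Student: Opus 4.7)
The plan is to split the argument along the dichotomy established by Theorem \ref{surface groups}: either $\cat(\phi)=\cd(\phi)=1$ (when $\phi$ factors through a free group $\Gamma\to F_n\to\Lambda$) or $\cat(\phi)=\cd(\phi)=2$. In both cases the lower bound will come from the chain of inequalities
$$
    TC_{r}(\phi)\;\geq\;\cat(\phi^{r-1})\;\geq\;\cd(\phi^{r-1})\;=\;(r-1)\cd(\phi),
$$
where the first inequality is the left half of Proposition \ref{TC}(2), the second is the general relation $\cd\leq\cat$ for group homomorphisms, and the last equality is Corollary \ref{product formula} applied to the geometrically finite groups $\Gamma,\Lambda$. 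Substituting $\cd(\phi)=1$ or $2$ then gives $r-1$ or $2(r-1)$ respectively.

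For the upper bound I would use the right half of Proposition \ref{TC}(2), namely $TC_{r}(\phi)\leq\cat(\phi^{r})$, and combine it with the product inequality for LS-category of maps,
$$
    \cat(f\times g)\;\leq\;\cat(f)+\cat(g),
$$
iterated $r$ times to give $\cat(\phi^{r})\leq r\cat(\phi)$. In Case 1 this yields $TC_{r}(\phi)\leq r$, and in Case 2 it yields $TC_{r}(\phi)\leq 2r$. As an alternative route in Case 1 I could bypass this inequality entirely by using the factorization $\Gamma\to F_n\to\Lambda$ together with Proposition \ref{Composition} to obtain $TC_{r}(\phi)\leq TC_{r}(g)\leq TC_{r}(F_n)\leq r$, invoking the known values $TC_{r}(\Z)=r-1$ and $TC_{r}(F_n)=r$ for $n>1$ from \cite{Rud, FO}.

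The main obstacle is the product inequality for $\cat$ of maps, since it is not explicitly recorded in the paper. I would justify it by the standard argument: given covers $\{U_i\}_{i=0}^{p}$ of $X$ and $\{V_j\}_{j=0}^{q}$ of $X'$ with $f|_{U_i}$ and $g|_{V_j}$ nullhomotopic, one refines so that within each $W_k=\bigsqcup_{i+j=k}U_i\times V_j$ the constituents are disjoint, and then the nullhomotopies of $(f\times g)|_{U_i\times V_j}$ assemble into a nullhomotopy on $W_k$, producing a cover of $X\times X'$ by $p+q+1$ sets on which $f\times g$ is nullhomotopic. With this product inequality in hand, Case 2 immediately gives $TC_{r}(\phi)\leq 2r$, and combined with the lower bound $2(r-1)$ completes the second estimate; Case 1 is handled analogously (or via the factorization route), finishing the proof.
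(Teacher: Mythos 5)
Your proof is correct, and your lower bound is exactly the paper's: $TC_{r}(\phi)\geq\cat(\phi^{r-1})\geq\cd(\phi^{r-1})=(r-1)\cd(\phi)$, using Proposition \ref{TC}, the inequality $\cd\leq\cat$ from \cite{DK}, and Corollary \ref{product formula} (legitimate here since surface groups and their finite powers are geometrically finite). Where you diverge is the upper bound: the paper handles the case $\cat(\phi)=\cd(\phi)=1$ by invoking Theorem \ref{surface groups} to factor $\phi$ through a free group and then applying Proposition \ref{Composition} together with $TC_{r}(F_{n})=r$ from \cite{FO} (this is your stated alternative route), and it dispatches the case $\cat(\phi)=\cd(\phi)=2$ only with the phrase ``similar argument''. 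Your primary route, $TC_{r}(\phi)\leq\cat(\phi^{r})\leq r\cat(\phi)$, treats both cases uniformly and avoids both the factorization and the known value of $TC_{r}(F_{n})$; its only extra input is the product inequality $\cat(f\times g)\leq\cat(f)+\cat(g)$, which the paper never records as a lemma but does use without proof in its final theorem on almost nilpotent groups, so your sketch of the standard refinement argument (unproblematic here, since the spaces involved are finite products of closed surfaces, hence compact metric and normal) fills in a step the paper takes for granted rather than introducing a gap. In short: identical lower bound, a more uniform and self-contained upper bound, at the cost of justifying a standard product formula for the LS-category of maps that the paper leaves implicit.
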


\begin{proof}
 By Proposition \ref{TC}, we obtain that $\cat(\phi^{r-1})\leq TC_{r}(\phi)$ Since $\cd(\phi^{r-1})\leq \cat(\phi^{r-1})$ and by Corollary \ref{product formula}, we obtain that $\cd(\phi^{r-1})=(r-1)\cd(\phi)=r-1$ where $\cd(\phi)=1.$  
 
 Since $\cat(\phi)=\cd(\phi)=1,$ by Theorem \ref{surface groups}  the epimorphism $\phi$ factors through the free groups $F_{n}$ via epimorphisms $\Gamma\to F_{n}\to \Lambda.$
By Proposition \ref{Composition}, we obtain that $$TC_{r}(\phi)\leq \min\{TC_{r}(h),TC_{r}(g)\}\leq TC_{r}(F_{n})=r$$ where the last equality is due {\cite{FO}}, $h:\Gamma\to F_{n}$ and $g:F_{n}\to\Lambda.$

With similar above argument, we obtain that $2(r-1)\leq TC_{r}(\phi)\leq 2r$ if $\cat(\phi)=\cd(\phi)=2.$
\end{proof}

We recall the upper central series of a group $\Gamma$ is a chain of subgroups 
$${e}=Z_{0} \leqslant Z_{1} \leqslant....\leqslant Z_{n} \leqslant.... $$
where $Z_{1}=Z(\Gamma)$ is the center of the group, and $Z_{i+1}$ is the preimage under the canonical epimorphism $\Gamma\to\Gamma/Z_{i}$ of the center of $\Gamma/Z_{i}$.
A group $\Gamma$ is {\em nilpotent} if $Z_{n}=\Gamma$ for some $n$.

A group $\Gamma$ is called a \textit{virtually nilpotent} if it has a nilpotent subgroup $\Gamma'$ of finite index. 

\begin{thm}\label{nilpotent}
 Let $\phi:\Gamma\to\Lambda$ be an epimorphisms between finitely generated, torsion-free nilpotent (virtually nilpotent) groups. Then $$(r-1)\cd(\Lambda)\leq TC_{r}(\phi)\leq r\cd(\Lambda).$$   
\end{thm}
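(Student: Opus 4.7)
The plan is to sandwich $TC_r(\phi)$ between $\cat(\phi^{r-1})$ and $\cat(\phi^r)$ via Proposition \ref{TC}(2), and then evaluate both bounds using three ingredients: the classical subadditivity $\cat(\psi\times\psi')\le\cat(\psi)+\cat(\psi')$ for maps, the Eilenberg--Ganea-type identity $\cat(\psi)=\cd(\psi)$ that I established for epimorphisms of finitely generated torsion-free (virtually) nilpotent groups in \cite{DK, Ku1, Ku2}, and the product formula $\cd(\psi^k)=k\,\cd(\psi)$ of Corollary \ref{product formula}. The decisive additional input is that for such an epimorphism one has $\cd(\phi)=\cd(\Lambda)$, which comes from the Poincar\'e duality structure of $B\Lambda$.

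For the upper bound, iterated subadditivity of LS-category for maps yields $\cat(\phi^r)\le r\cat(\phi)$. The trivial inequality $\cat(\phi)\le\cat(\Lambda)$ (obtained by pulling back a categorical cover of $B\Lambda$ along $B\phi$), combined with $\cat(\Lambda)=\cd(\Lambda)$ --- valid since $B\Lambda$ is a closed aspherical (infra-)nilmanifold --- then gives
\begin{equation*}
TC_r(\phi)\le\cat(\phi^r)\le r\cat(\phi)\le r\,\cd(\Lambda).
\end{equation*}
For the lower bound, Proposition \ref{TC}(2) combined with $\cd(\psi)\le\cat(\psi)$ and Corollary \ref{product formula} gives
\begin{equation*}
(r-1)\cd(\phi)=\cd(\phi^{r-1})\le\cat(\phi^{r-1})\le TC_r(\phi).
\end{equation*}
The desired inequality $(r-1)\cd(\Lambda)\le TC_r(\phi)$ then follows once $\cd(\phi)\ge\cd(\Lambda)$ is established.

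The main obstacle is this last identification $\cd(\phi)\ge\cd(\Lambda)$. Here one exploits that $\Lambda$, being finitely generated torsion-free nilpotent (or virtually so), is an orientable or twisted Poincar\'e duality group of dimension $n=\cd(\Lambda)$, so there is a $\Z\Lambda$-module $M$ (the dualizing module) with $H^n(\Lambda;M)\ne 0$. The kernel $K=\ker\phi$ is itself finitely generated torsion-free nilpotent, and the associated aspherical fibration $BK\to B\Gamma\to B\Lambda$ gives a Serre spectral sequence in which the corner $E_2^{n,0}$ survives to $E_\infty$: no non-trivial differential can exit $E_r^{n,0}$ because the targets $E_r^{n+r,1-r}$ vanish, and the path-connectedness of the fiber $BK$ ensures that the edge homomorphism coincides with $\phi^*\colon H^n(\Lambda;M)\to H^n(\Gamma;M)$. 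Hence $\phi^*$ is non-zero and $\cd(\phi)\ge n$, with the reverse inequality being automatic. The virtually nilpotent case is finally reduced to the torsion-free nilpotent case by passing to a finite-index torsion-free nilpotent subgroup of $\Gamma$ and transferring the estimate via the retraction argument of Corollary \ref{retraction} applied at the level of classifying spaces.
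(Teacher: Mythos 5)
Your overall scaffolding (the sandwich $\cat(\phi^{r-1})\le TC_r(\phi)\le\cat(\phi^r)$ from Proposition \ref{TC}, the product formula of Corollary \ref{product formula}, and the reduction of everything to the single identity $\cd(\phi)=\cd(\Lambda)$) matches the structure of the paper's argument, and your upper bound via $\cat(\phi^r)\le r\cat(\phi)\le r\cat(\Lambda)=r\cd(\Lambda)$ is a legitimate (slightly different) route. The gap is in the one step you flagged as decisive: your spectral-sequence proof that $\cd(\phi)\ge\cd(\Lambda)$ is wrong as stated. Absence of differentials \emph{leaving} $E_r^{n,0}$ does not make the corner survive to $E_\infty$; the incoming differentials $d_r\colon E_r^{n-r,\,r-1}\to E_r^{n,0}$ can kill it, and for nilpotent groups they typically do. Concretely, take $\Gamma$ the integral Heisenberg group and $\phi\colon\Gamma\to\Lambda=\Z^2$ the abelianization, with $K=\ker\phi\cong\Z$. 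Here $\Lambda$ is an orientable Poincar\'e duality group of dimension $n=2$, so your dualizing module is the trivial module $\Z$; but the fibration $BK\to B\Gamma\to B\Lambda$ is the circle bundle over $T^2$ with nonzero Euler class, the differential $d_2\colon E_2^{0,1}\to E_2^{2,0}$ is cup product with that Euler class, $E_\infty^{2,0}=0$, and hence the edge map $\phi^*\colon H^2(\Lambda;\Z)\to H^2(\Gamma;\Z)$ is \emph{zero}. So the inequality $\cd(\phi)\ge\cd(\Lambda)$ cannot be extracted from Poincar\'e duality plus a corner argument with the dualizing module; one must detect top-degree classes with a nontrivial coefficient module (e.g.\ via the Berstein--Schwarz class and Theorem \ref{universal}), and the proof genuinely uses nilpotency. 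This is exactly why the paper does not argue this point but cites it: $\cd(\phi)=\cd(\Lambda)$ is Theorem 4.2 of \cite{Ku2}, and the equalities $\cat(\phi^k)=\cd(\phi^k)$ come from Theorem 4.1 of \cite{DK} (products of (virtually) nilpotent groups being (virtually) nilpotent), after which the product formula finishes both bounds.

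A secondary problem is your reduction of the virtually nilpotent case: a finite-index torsion-free nilpotent subgroup of $\Gamma$ is not a retract, so Corollary \ref{retraction} does not apply; passing to finite index for $\cd$ requires a transfer-type argument, and in the paper this case is again absorbed into the cited results of \cite{DK} and \cite{Ku2} rather than handled by restriction. So as written your proposal does not close; it becomes correct if you replace the spectral-sequence paragraph and the finite-index reduction by the cited facts $\cd(\phi)=\cd(\Lambda)$ and $\cat(\phi^k)=\cd(\phi^k)$, or by an honest proof of $\phi^*(\beta_\Lambda^{\,n})\neq 0$ for epimorphisms of finitely generated torsion-free (virtually) nilpotent groups.
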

\begin{proof}
By Proposition \ref{TC}, we obtain that $\cat(\phi^{r-1})\leq TC_{r}(\phi)\leq \cat(\phi^{r}).$ Since the Cartesian product of nilpotent (virtually nilpotent) groups is nilpotent (virtually nilpotent), Theorem 4.1 in \cite{DK} gives the following equality, $\cat(\phi^{k})=\cd(\phi^{k})$ for each finite k. By Corollary \ref{product formula}, $\cd(\phi^{k})=k\cd(\phi)$ since the groups are geometrically finite. Since $\cd(\phi)=\cd(\Lambda)$ in Theorem 4.2 \cite{Ku2}, we obtain the result by the combining all above equalities.    
    
\end{proof}

We recall an almost nilpotent group is the extension group of the infinite cyclic group by the torsion-free nilpotent groups. More explicitly, one can think that it is the semidirect product of the torsion-free nilpotent group with infinite cyclic group $\Z\ltimes_{\phi}\Gamma,$ where $\phi:\Z\to Aut(\Gamma)$ is a one-parameter group.

\begin{thm}
Let $f:\Z\ltimes_{\phi}\Gamma\to\Z\ltimes_{\nu}\Lambda$ be an epimorphism between torsion-free almost nilpotent groups such that $f|_{\Z}=Id.$ Then $$(r-1)(\cd(\Lambda)+1)\leq TC_{r}(\phi)\leq r(\cd(\Lambda)+1).$$     
\end{thm}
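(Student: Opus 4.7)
The plan is to run the proof of Theorem \ref{nilpotent} almost verbatim, adding one dimension to account for the $\Z$-factor. By Proposition \ref{TC} we have the standard sandwich
$$\cat(f^{r-1})\le TC_r(f)\le \cat(f^r),$$
so the whole argument reduces to computing $\cat(f^k)$ for $k=r-1$ and $k=r$.

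First, I would observe that both $\Z\ltimes_\phi\Gamma$ and $\Z\ltimes_\nu\Lambda$ are torsion-free and geometrically finite: their classifying spaces are mapping tori over $S^1$ with fiber $B\Gamma$ and $B\Lambda$, respectively, hence finite aspherical complexes. In particular Corollary \ref{product formula} applies and yields $\cd(f^k)=k\,\cd(f)$ for every finite $k$.

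Second, I would show that $\cd(f)=\cd(\Lambda)+1$. Because $f|_\Z=\mathrm{Id}$, the classifying map $Bf$ fits in a commutative square over $S^1$ between the two mapping tori. Applying Theorem~4.2 of \cite{Ku2} to the restriction $\phi|_\Gamma:\Gamma\to\Lambda$ produces a top-dimensional class in $H^{\cd(\Lambda)}(B\Lambda;M)$ that survives pull-back on the fiber; cupping with the pull-back of the generator of $H^1(S^1)$ then gives a nonzero class in $H^{\cd(\Lambda)+1}(B(\Z\ltimes_\nu\Lambda);M)$ whose image under $(Bf)^{*}$ is nonzero. A dimension count based on the mapping-torus structure rules out survival of classes in higher degree, so equality holds.

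The main obstacle, and the technical crux, is upgrading Theorem~4.1 of \cite{DK} (the Eilenberg--Ganea-type equality $\cat(f^k)=\cd(f^k)$) from the nilpotent case to the iterates $f^k$ between the powers $(\Z\ltimes\Gamma)^k$ and $(\Z\ltimes\Lambda)^k$, which are polycyclic but no longer almost nilpotent in the strict sense of the statement. I expect this to go through because the classifying space of a torsion-free polycyclic group is a closed aspherical manifold (an iterated mapping torus of a nilmanifold), so the arguments of \cite{DK} adapt; verifying the relative version for $f^k$ is the delicate technical point. Granting this, the sandwich becomes
$$(r-1)(\cd(\Lambda)+1)=\cat(f^{r-1})\le TC_r(f)\le \cat(f^r)=r(\cd(\Lambda)+1),$$
which is the desired conclusion.
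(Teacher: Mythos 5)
Your proposal stalls exactly at the step you flag as the ``technical crux,'' and that step is left as an unverified expectation: you need $\cat(f^{k})=\cd(f^{k})$ for the powers $f^{k}$, and you propose to get it by extending the Eilenberg--Ganea-type Theorem~4.1 of \cite{DK} from (virtually) nilpotent groups to the polycyclic groups $(\Z\ltimes\Gamma)^{k}$, which you do not carry out. As written, both bounds of the sandwich rest on this unproven extension (the upper bound $\cat(f^{r})\le r(\cd(\Lambda)+1)$ has no other argument in your sketch), so the proof is incomplete. Your secondary step, re-deriving $\cd(f)=\cd(\Lambda)+1$ by a mapping-torus cup-product argument, is also only sketched (the ``dimension count'' excluding higher-degree survival is not justified), but that is less serious since it is precisely what \cite{Ku2} supplies.

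The point you are missing is that no Eilenberg--Ganea-type theorem for the product groups is needed. The paper uses the full strength of \cite{Ku2}, namely $\cat(f)=\cd(f)=\cd(\Lambda)+1$ (not just the $\cd$ computation), and then squeezes: on one side $\cd(f^{k})\le\cat(f^{k})$ always, and on the other side subadditivity of LS-category under products of maps gives
$$\cat(f^{k})\le k\,\cat(f)=k\,\cd(f)=\cd(f^{k}),$$
where the last equality is Corollary \ref{product formula}. Hence $\cat(f^{k})=\cd(f^{k})=k(\cd(\Lambda)+1)$ for every finite $k$, and Proposition \ref{TC} yields both bounds at once. If you want to salvage your write-up with minimal change: the lower bound already follows from $\cd(f^{r-1})\le\cat(f^{r-1})$ plus Corollary \ref{product formula} and $\cd(f)=\cd(\Lambda)+1$; for the upper bound, replace the appeal to an extended \cite{DK} Theorem~4.1 by the subadditivity estimate above together with $\cat(f)=\cd(f)$ from \cite{Ku2}.
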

\begin{proof}
Similarly to Theorem \ref{nilpotent}, we have that $\cat(\phi^{r-1})\leq TC_{r}(\phi)\leq \cat(\phi^{r}).$ 
Since $\cat(f)=\cd(f)=\cd(\Lambda)+1$ in \cite{Ku2} and Corollary \ref{product formula}, it suffices to show $\cat(f^{k})=\cd(f^{k}).$

Note that we always have the following inequality $\cd(f^{k})\leq\cat(f^{k})$ (see \cite{DK}).

The reverse inequality comes from the product formula for LS-category of maps and Corollary \ref{product formula}. Indeed, $\cat(f^{k})\leq k\cat(f)=k\cd(f)=\cd(f^{k})$.

\end{proof}

We would like to end the paper by asking the following questions:
\begin{ques}\label{cohomological bound}
 Is the following equality $\cd(\phi^{r-1})\leq TC_{r}(\phi)\leq \cd(\phi^{r})$ true for the epimorphisms of geometrically finite groups?   
\end{ques}
This question is motivated by the following fact $\cd(\Gamma^{r-1})\leq TC_{r}(\Gamma)\leq\cd(\Gamma^{r}).$ 

\begin{ques}\label{gap}
  What is the gap of LS-category and cohomological dimension of group homomorphisms of geometrically finite groups?   
\end{ques}

 All known examples regarding the Question \ref{gap} have a gap of 1 (See \cite{DK,DD,Gr}). Possible negative answer for Question \ref{cohomological bound} comes the constructing the arbitrary large gap for Question \ref{gap}.

\section*{Acknowledgments}
I would like to thank my advisor, Alexander Dranishnikov, for all of his help and encouragement
throughout this project.

\footnotesize

\end{document}